\newtheorem*{rep@theorem}{\rep@title}
\newcommand{\newreptheorem}[2]{%
\newenvironment{rep#1}[1]{%
 \def\rep@title{#2 \ref{##1}}%
 \begin{rep@theorem}}%
 {\end{rep@theorem}}}
\theoremstyle{plain}
\newtheorem{introthm}{Theorem}
\newtheorem{introcoro}[introthm]{Corollary}
\newtheorem{theorem}{Theorem}[section]
\newtheorem{proposition}[theorem]{Proposition}
\newtheorem{lemma}[theorem]{Lemma}
\newtheorem{corollary}[theorem]{Corollary}
\theoremstyle{definition}
\newtheorem{example}[theorem]{Example}
\newtheorem{question}[theorem]{Question}
\theoremstyle{remark}
\newtheorem{remark}[theorem]{Remark}
\DeclareMathOperator{\rank}{rank}
\DeclareMathOperator{\st}{st}
\DeclareMathOperator{\lk}{lk}
\DeclareMathOperator{\supp}{supp}
\DeclareMathOperator{\esupp}{esupp}
\DeclareMathOperator{\Deck}{Deck}
\DeclareMathOperator{\Inn}{Inn}
\DeclareMathOperator{\GL}{GL}
\DeclareMathOperator{\Aut}{Aut}
\DeclareMathOperator{\PAut}{PAut}
\DeclareMathOperator{\Out}{Out}
\DeclareMathOperator{\FAut}{FAut}
\DeclareMathOperator{\FOut}{FOut}
\DeclareMathOperator{\LAut}{LAut}
\DeclareMathOperator{\LOut}{LOut}
\DeclareMathOperator{\LPAut}{LPAut}
\DeclareMathOperator{\IA}{IA}
\DeclareMathOperator{\FD}{FDeck}
\DeclareMathOperator{\C}{C}
\DeclareMathOperator{\SLAut}{SLAut}
\DeclareMathOperator{\SAut}{SAut}
\DeclareMathOperator{\F}{\mathbb{F}}
\DeclareMathOperator{\Z}{\mathbb{Z}}
\DeclareFontFamily{OMX}{MnSymbolE}{}
\DeclareSymbolFont{MnLargeSymbols}{OMX}{MnSymbolE}{m}{n}
\DeclareFontShape{OMX}{MnSymbolE}{m}{n}{
    <-6>  MnSymbolE5
   <6-7>  MnSymbolE6
   <7-8>  MnSymbolE7
   <8-9>  MnSymbolE8
   <9-10> MnSymbolE9
  <10-12> MnSymbolE10
  <12->   MnSymbolE12
}{}
\DeclareFontShape{OMX}{MnSymbolE}{b}{n}{
    <-6>  MnSymbolE-Bold5
   <6-7>  MnSymbolE-Bold6
   <7-8>  MnSymbolE-Bold7
   <8-9>  MnSymbolE-Bold8
   <9-10> MnSymbolE-Bold9
  <10-12> MnSymbolE-Bold10
  <12->   MnSymbolE-Bold12
}{}
\let\llangle\@undefined
\let\rrangle\@undefined
\DeclareMathDelimiter{\llangle}{\mathopen}%
                     {MnLargeSymbols}{'164}{MnLargeSymbols}{'164}
\DeclareMathDelimiter{\rrangle}{\mathclose}%
                     {MnLargeSymbols}{'171}{MnLargeSymbols}{'171}
\begin{document}

\title{Liftable automorphisms of right-angled Artin groups}

\author{Sangrok Oh}
\address{Sangrok Oh, University of the Basque Country \\ Department of
Mathematics \\ Bilbao, Spain}
\email{sangrokoh.math@gmail.com}
\urladdr{https://sites.google.com/view/sangrokohmath}
	
\author{Donggyun Seo}
\address{Donggyun Seo, Seoul National University \\ Department of Mathematical Sciences \\ Seoul, Korea}
\email{seodonggyun@snu.ac.kr}
\urladdr{sites.google.com/view/donggyunseo/}

\author{Philippe Tranchida}
\address{Philippe Tranchida, Universit\'e Libre de Bruxelles, D\'epartement de Math\'ematique, C.P.216 - Alg\`ebre et Combinatoire, Boulevard du Triomphe, 1050 Brussels, Belgium.}
\email{tranchida.philippe@gmail.com}
\urladdr{sites.google.com/view/ptranchi/}

\keywords{} 
\subjclass{}

\begin{abstract}
Given a regular covering map $\varphi:\Lambda \to \Gamma$ of graphs, we investigate the subgroup $\LAut(\varphi)$ of the automorphism group $\Aut(A_\Gamma)$ of the right-angled Artin group $A_\Gamma$. This subgroup comprises all automorphisms that can be lifted to automorphisms of $A_\Lambda$. We first show that $\LAut(\varphi)$ is generated by a finite subset of Laurence's elementary automorphisms. 

For the subgroup $\FAut(\varphi)$ of $\Aut(A_\Lambda)$, which consists of lifts of automorphisms in $\LAut(\varphi)$, there exists a natural homomorphism $\FAut(\varphi)\to\LAut(\varphi)$ induced by $\varphi$.
We then show that the kernel of this homomorphism is virtually a subgroup of the Torelli subgroup $\IA(A_\Lambda)$ and deduce a short exact sequence reminiscent of results from the Birman--Hilden theory for surfaces.
\end{abstract}

\maketitle

\section{Introduction}
A right-angled Artin group $A_\Gamma$ (or briefly, RAAG) is uniquely determined by its defining simplicial graph $\Gamma$ \cite{Droms}; its standard generators and relators correspond to the vertices and edges of $\Gamma$ (see Section \ref{sec:pre}).
To explore the properties of RAAGs and their (outer) automorphism groups, numerous instances have arisen, prompting investigations into the outcomes yielded by naturally induced surjective homomorphisms $\phi:A_\Lambda\to A_\Gamma$.
To cite only a few:

\begin{enumerate}[labelindent=0pt,label=(\arabic*),itemindent=1em,leftmargin=1.5em]
\item\label{Item:BB}  (When $\Gamma$ is a single vertex.) In \cite{BB97}, Bestvina and Brady showed that the kernel of $\phi$ is of type $F_n$ (a finiteness property, see \cite{BB97} for the precise definition) if and only if the flag completion of $\Gamma$ is $(n-1)$-connected. 
This fact tells us that $A_\Gamma$ can have subgroups which have complicated structures.
\item\label{Item:Torelli} 
(When $\Gamma$ is a complete graph with $|V\Gamma|=|V\Lambda|$.)
From a natural homomorphism $\Phi:\Aut(A_\Gamma)\to\GL(n,\Z)$ induced from $\phi$, one can define the Torelli subgroup $\IA(A_\Gamma)$ of $\Aut(A_\Gamma)$ as the kernel of $\Phi$.
As an analog of the Torelli subgroup of a mapping class group, the kernel of the symplectic representation of the mapping class group, the group $\IA(A_\Gamma)$ has received a fair share of attention but its structure has remained rather opaque for now. Nonetheless, its finite generation is proved in \cite{Day09},\cite{Wade12}.
\item\label{Item:CV}  (When $\Gamma$ is a specific subgraph of $\Lambda$.) In \cite{CV09} and \cite{CV11}, Charney and Vogtmann defined virtually surjective homomorphisms from $\Out(A_\Gamma)$ to $\Out(A_\Lambda)$, called \textit{restriction} and \textit{projection} maps. Using these maps, they investigated various properties of $\Out(A_\Gamma)$ including the Tits alternative, residual finiteness and finite virtual cohomological dimension.
\end{enumerate}

In the above cases, indeed, the `natural' surjective homomorphisms $\phi:A_\Lambda\to A_\Gamma$  comes from a graph morphism $\varphi:\Lambda\to\Gamma$ whose restriction to the vertex set of $\Lambda$ is surjective onto the vertex set of $\Gamma$.
In this case, an automorphism $f$ of $A_\Gamma$ is said to be \emph{liftable} if there exists an automorphism $F$ of $A_\Lambda$, called a \emph{lift} of $f$, such that $f\circ\phi = \phi\circ F$. The subset of $\Aut(A_\Gamma)$ which consists of liftable automorphisms forms a subgroup, denoted by $\LAut(\varphi)$.
An automorphism of $\Aut(A_\Lambda)$ which is a lift of an automorphism of $\Aut(A_\Gamma)$ is said to be \emph{fiber-preserving}, and the subset of $\Aut(A_\Lambda)$ consisting of fiber-preserving automorphisms also forms a group, denoted by $\FAut(\varphi)$.
By simple algebra, one can then obtain a surjective homomorphism $\Phi_I:\Inn(A_\Lambda)\twoheadrightarrow \Inn(A_\Gamma)$ (see Lemma~\ref{lem:inner}) and its extension $\Phi:\FAut(\varphi)\twoheadrightarrow\LAut(\varphi)$ such that the following diagram holds.
\[
\begin{tikzcd}
  1 \arrow[r] & \ker\Phi  \arrow[r, "f"] & \FAut(\varphi)  \arrow[r, "\Phi"] & \LAut(\varphi)  \arrow[r] & 1 \\
  1 \arrow[r] & \ker\Phi_I \arrow[u, " "] \arrow[r, "f'"] & \Inn(A_\Lambda) \arrow[u, " "] \arrow[r, "\Phi_I"] & \Inn(A_\Gamma) \arrow[u, " "] \ar[r] & 1
\end{tikzcd}
\]

In this paper, we focus our attention on the case when $\Lambda$ is a regular covering of $\Gamma$ without isolated vertices. 
Our results can be mainly divided into two parts: 
The first one is that $\LAut(\varphi)$ is generated by finitely many elements, which can be determined from $\varphi$, and the second one is that the kernel of $\Phi$ turns out to be commensurable to a subgroup of $\IA(A_\Lambda)$. Here are the precise statements.

\begin{introthm}[Theorem~\ref{thm:liftable_conjugating}, Theorem~\ref{thm:lifable_group}] \label{introthm:generation_laut}
Let $\varphi:\Lambda\to\Gamma$ be a regular covering map of graphs without isolated vertices.
Then $\LAut(\varphi)$ is generated by inversions, liftable graph symmetries, liftable transvections, and liftable partial conjugations.
In particular, $\LAut(\varphi)$ is finitely generated.

Moreover, any liftable automorphism which maps each standard generator $v$ of $A_\Gamma$ to a conjugate of $v$ is generated by liftable partial conjugations.
\end{introthm}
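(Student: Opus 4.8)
The plan is to argue by induction on a complexity measure, stripping off one \emph{liftable} partial conjugation at each step; this specializes the peak-reduction scheme behind the generation statement (Theorem~\ref{thm:lifable_group}) to the case where the data on homology and on graph symmetries is already trivial. Write $f(v)=g_v\,v\,g_v^{-1}$ for every $v\in V\Gamma$, choosing each $g_v$ of minimal word length in its coset $g_vC(v)$ modulo the centraliser $C(v)=\langle\st(v)\rangle$ of $v$; this pins $g_v$ down up to the ambiguity that does not affect $f$. Put $\|f\|=\sum_{v\in V\Gamma}|g_v|$. Since $g_v\,v\,g_v^{-1}=v$ exactly when $g_v\in C(v)$, we have $\|f\|=0$ if and only if $f=\mathrm{id}$, which is the base case. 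As both the liftable automorphisms and the conjugating automorphisms form subgroups of $\Aut(A_\Gamma)$, it suffices to produce, for each $f$ with $\|f\|>0$, a liftable partial conjugation $c$ with $\|c^{-1}\circ f\|<\|f\|$: then $f=c\circ(c^{-1}\circ f)$, the factor $c^{-1}\circ f$ is again a liftable conjugating automorphism, and induction expresses it, hence $f$, as a product of liftable partial conjugations.

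To produce a length-reducing partial conjugation I would run the standard combinatorial analysis of conjugating automorphisms that underlies their generation by partial conjugations. Inspecting the outermost letters of the reduced conjugators $g_v$ singles out a vertex $w$ and a connected component $K$ of $\Gamma\smallsetminus\st(w)$ such that the partial conjugation $c=c_{w,K}$ conjugating $K$ by $w$ satisfies $\|c^{-1}\circ f\|<\|f\|$; the minimality of the $g_v$ is exactly what guarantees that $w$ dominates $K$ in the way required for $c_{w,K}$ to be a genuine partial conjugation. This step is identical to the one in the absence of a covering and involves no new idea.

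The crux is that the partial conjugation $c_{w,K}$ extracted above is itself \emph{liftable}; this does not follow formally from liftability of $f$, since $c_{w,K}$ is only one factor. To settle it I would verify the covering-theoretic criterion for liftability of a partial conjugation isolated in Theorem~\ref{thm:liftable_conjugating}, which constrains how $w$ and $K$ sit over $\varphi$ (the deck group $\Deck(\varphi)$ must permute the components of $\varphi^{-1}(K)$ compatibly with a choice of lifts of $w$). Fix a lift $F\in\FAut(\varphi)$ of $f$, so that $\phi\circ F=f\circ\phi$; for $v\in K$ with reduced conjugator $g_v=w\,g_v'$ we have $\phi(F\tilde v)=f(v)=w\,(g_v'\,v\,g_v'^{-1})\,w^{-1}$, so the reduced word $\phi(F\tilde v)$ begins with $w$. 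One then expects the outermost letter of $F(\tilde v)$ to be a lift of $w$, and tracking these lifts across the fibres $\varphi^{-1}(w)$ and the preimage $\varphi^{-1}(K)$ should exhibit precisely the component-to-lift assignment demanded by the criterion. Making this rigorous is the main obstacle: because $\phi$ sends generators to generators it may shorten reduced words (two distinct $\Lambda$-generators can have the same image), so one cannot simply read the outer structure of $F(\tilde v)$ off that of its projection. The heart of the proof is thus to show that the innermost layer of a liftable conjugating automorphism is again liftable, rather than merely that its projection is length-reducing; once this is secured the induction closes immediately.
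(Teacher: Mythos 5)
Your proposal correctly reproduces the skeleton of the paper's argument for the ``moreover'' part (run Laurence's algorithm, then show each extracted factor is liftable), but it contains one step that is false as stated and leaves the genuine crux unresolved. The false step: extracting a partial conjugation along a \emph{single} connected component $K$ of $\Gamma\setminus\st(w)$ cannot work, because liftability fails at exactly that granularity. In the paper's Figure~\ref{figure:NotLift}, the automorphism $P_{B_1}^{v}P_{B_2}^{v}$ is a liftable conjugating automorphism, yet neither single-component factor $P_{B_1}^{v}$ nor $P_{B_2}^{v}$ is liftable (Proposition~\ref{lift_partial_2}); Laurence's algorithm run with single components would extract precisely such a non-liftable factor, so your claim that ``the partial conjugation $c_{w,K}$ extracted above is itself liftable'' is simply not true. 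The paper repairs this by running the algorithm with \emph{maximal unions} of components (namely $C_1$ is the set of all vertices whose conjugating word has $w_1v_1^{\varepsilon_1}$ as left-most letter), and it is only for these maximal unions that one can prove the criterion $\bar{C_1}=C_1$ of Proposition~\ref{lift_partial_2} (which, incidentally, is the correct reference for the liftability criterion, not Theorem~\ref{thm:liftable_conjugating} itself).

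Second, even after this repair, the heart of the matter --- proving that the maximal extracted factor is liftable --- is exactly what you defer as ``the main obstacle,'' and the paper's resolution requires machinery your sketch does not supply. The paper argues by contradiction: if $P_{C_1}^{v_1}$ is not liftable, then $\bar{B_1}$ contains a component $B_0\in\mathcal{C}(v_1)$ outside $C_1$. It then invokes Lemma~\ref{lem:conj_lift} (every liftable conjugating automorphism admits a \emph{conjugating} lift), which itself rests on Corollary~\ref{lem:deck_lift} and the graph-symmetry analysis of Theorem~\ref{thm:lift_iso} --- this is where the no-isolated-vertices hypothesis is consumed. Normalizing the conjugating lift to fix a vertex $u\in\varphi^{-1}(v_1)$, the paper uses the structural Lemma~\ref{conjugating} on conjugators together with Laurence's Lemma~\ref{LL} applied \emph{upstairs} in $\Lambda$ (where components $\widetilde{B}_0$ and $\widetilde{B}_1$ lie in a common component of $\Lambda\setminus\st(u)$) to force $v_1$ to appear as a left-most letter in the conjugators of vertices of $B_0$ as well, contradicting the maximality of $C_1$. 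This sidesteps your worry about $\phi$ collapsing generators, but by a route (conjugating lifts plus word combinatorics against maximality) that your proposal neither carries out nor replaces. Finally, note that the statement also asserts the first half of Theorem~A, the generation of all of $\LAut(\varphi)$ (Theorem~\ref{thm:lifable_group}); you treat that as given, whereas in the paper the dependence runs the other way: Theorem~\ref{thm:liftable_conjugating} is proved first, and Theorem~\ref{thm:lifable_group} reduces a general liftable automorphism to a liftable conjugating one via liftable symmetries, inversions and transvections.
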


\begin{introthm}[Theorem~\ref{CommensurableTorelli}] \label{introthm:lift_identity}
Let $\varphi:\Lambda\to\Gamma$ be given as in Theorem~\ref{introthm:generation_laut}.
Then the group of lifts of the identity of $\Aut(A_\Gamma)$ is commensurable to a subgroup of the Torelli subgroup $\IA(A_\Lambda)$ of $\Aut(A_\Lambda)$.
\end{introthm}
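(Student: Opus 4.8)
The plan is to convert the statement into a finiteness statement for a linear representation, and then to read that finiteness off from the local combinatorics of the covering. Throughout I write $\ker\Phi=\{F\in\Aut(A_\Lambda):\phi\circ F=\phi\}$ for the group of lifts of the identity, and I let $h\colon\Aut(A_\Lambda)\to\GL(H_1(A_\Lambda;\Z))=\GL(n,\Z)$ (with $n=|V\Lambda|$) be the homology representation, whose kernel is by definition $\IA(A_\Lambda)$. Since $\ker\Phi\cap\IA(A_\Lambda)$ is precisely the kernel of $h|_{\ker\Phi}$, the quotient $\ker\Phi/(\ker\Phi\cap\IA(A_\Lambda))$ is isomorphic to $P:=h(\ker\Phi)$. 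Hence it suffices to prove that $P$ is finite: then $\ker\Phi\cap\IA(A_\Lambda)$ is a finite-index subgroup of $\ker\Phi$ contained in $\IA(A_\Lambda)$, which is exactly the asserted commensurability. Writing $\phi_*\colon\Z^{V\Lambda}\to\Z^{V\Gamma}$, $e_{\tilde v}\mapsto e_{\varphi(\tilde v)}$, for the induced fiber-sum map, the relation $\phi\circ F=\phi$ gives $\phi_*\circ h(F)=\phi_*$, so $P$ is contained in $S:=\{M\in h(\Aut(A_\Lambda)):\phi_* M=\phi_*\}$, and I will show that $S$ is finite.

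The key geometric input is the following consequence of $\Lambda$ having no isolated vertices (and $\Gamma$ being simplicial): if $\tilde w$ dominates $\tilde v$, meaning $\lk(\tilde v)\subseteq\st(\tilde w)$, and $\tilde w\neq\tilde v$, then $\varphi(\tilde w)\neq\varphi(\tilde v)$; more generally, two distinct vertices that both dominate a common vertex $\tilde v$ lie in distinct fibers of $\varphi$. First I would prove this by choosing $\tilde x\in\lk(\tilde v)$ (possible since $\tilde v$ is not isolated) and using that $\varphi$ restricts to a bijection on each link: two distinct dominators over the same vertex $y$ would either give $\tilde x$ two neighbors over $y$, or force two vertices over $y$ to be adjacent, contradicting respectively the bijectivity of $\varphi$ on links and the absence of loops in $\Gamma$. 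Two immediate corollaries are that $\varphi$ is injective on every domination class of $\Lambda$, and that for each vertex $\tilde u$ and each $y\in V\Gamma$ at most one lift of $y$ dominates $\tilde u$ (namely $\tilde u$ itself when $y=\varphi(\tilde u)$).

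With this in hand I would invoke the description of $h(\Aut(A_\Lambda))$ coming from Laurence's generators: it is generated by the finite group $W$ of signed permutation matrices induced by graph symmetries of $\Lambda$ and by inversions, together with the elementary transvections $I+E_{\tilde w\tilde u}$ where $\tilde w$ dominates $\tilde u$ and $\tilde w\neq\tilde u$. The condition $\phi_* M=\phi_*$ is, column by column, the requirement that for every $\tilde u$ and every $y\in V\Gamma$ the fiber sum $\sum_{\tilde w\in\varphi^{-1}(y)}(\text{coefficient of }e_{\tilde w}\text{ in }M e_{\tilde u})$ equals $1$ if $y=\varphi(\tilde u)$ and $0$ otherwise. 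By the corollaries above, a nonzero off-diagonal coefficient of $M\in S$ forces $\tilde w$ to dominate $\tilde u$, hence to lie in a different fiber, and within any fixed fiber at most one dominator of $\tilde u$ is available; so each fiber sum contains at most one transvection coefficient and must vanish, forcing those coefficients to be zero. Likewise, since $\varphi$ is injective on each free domination class, $\phi_*$ is injective on the corresponding coordinate subspace and kills any nontrivial $\GL$-mixing inside such a class. The upshot is that the only remaining freedom in $S$ is the signed-permutation part, so $S$ is finite; as a reality check, the finite group $\Deck(\varphi)\le\ker\Phi$ already accounts for the fiber-permuting matrices present in $P$.

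The step I expect to be the main obstacle is this last one: the vanishing argument is transparent on the generators but must be made rigorous for an arbitrary $M\in S$, which is a product of generators whose signed-permutation and transvection parts interact. The clean way to close the gap is to fix the block-upper-triangular normal form of $h(\Aut(A_\Lambda))$ with respect to a linear extension of the domination order, run the fiber-sum vanishing argument block by block (starting from the maximal domination classes, where no strict transvections enter, and descending), and thereby show that every $M\in S$ has entries in a fixed finite set. Since a subgroup of $\GL(n,\Z)$ with uniformly bounded entries is finite, this yields the finiteness of $S$, hence of $P$, and completes the proof.
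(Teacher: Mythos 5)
Your overall strategy is sound and genuinely different from the paper's. The paper first replaces a lift $F$ of the identity by an essential one using a deck transformation (Corollary~\ref{lem:deck_lift}), and then shows by a word-level argument --- cyclically reduced forms, Lemma~\ref{FarVertices}, and the ``exchange'' technique in the auxiliary graph $\Lambda^+$ (Lemma~\ref{ExchangeLemma}) --- that every essential lift of the identity lies in $\IA(A_\Lambda)$, which yields $\FD(\varphi)/(\FD(\varphi)\cap\IA(A_\Lambda))\cong\Deck(\varphi)$. You instead linearize: reduce to finiteness of $P=h(\ker\Phi)$, enlarge to $S=\{M\in h(\Aut(A_\Lambda)):\phi_*M=\phi_*\}$, and try to kill $S$ with the fiber-sum constraint. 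Your first two paragraphs are correct: the reduction to finiteness of $P$, the containment $P\subseteq S$, and the geometric fact that two distinct dominators of a common non-isolated vertex lie in distinct fibers (a mild strengthening of Lemma~\ref{lem:simplicial_criterion}, proved by exactly the local-bijectivity argument you sketch).

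However, the step you flag as the main obstacle is a genuine gap, and your proposed repair does not work as stated. The vanishing argument of your third paragraph is valid only for generators: for an arbitrary $M\in S$ there is no a priori reason that a nonzero entry in column $\tilde u$ sits at a row dominating $\tilde u$. Moreover, there is no ``block-upper-triangular normal form of $h(\Aut(A_\Lambda))$ with respect to a linear extension of the domination order'': graph symmetries of $\Lambda$ permute incomparable domination classes, so the image of $h$ is not contained in any block-triangular subgroup --- each element is block-triangular only up to composition with a permutation matrix, and your descent ``from the maximal classes down'' ignores this twist. The correct way to close the gap is to stop working generator-by-generator and instead invoke the structural results the paper records as Theorem~\ref{thm:iso_thm} and Proposition~\ref{prop:essupp_auto}: for any $F\in\Aut(A_\Lambda)$ there is a graph symmetry $\tau$ of $\Lambda$ with $\esupp F(\tilde u)\subseteq V\Lambda_{\gtrsim \tau(\tilde u)}$ for every vertex $\tilde u$. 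Since the homology class of $F(\tilde u)$ equals that of its cyclically reduced representative, the column of $h(F)$ at $\tilde u$ is supported on $\esupp F(\tilde u)$, so all of its nonzero entries sit at rows dominating the \emph{single} vertex $\tau(\tilde u)$; by your geometric fact these rows lie in pairwise distinct fibers, so each fiber contributes at most one term to each fiber sum, and $\phi_*M=\phi_*$ then forces column $\tilde u$ to be exactly $e_{\tilde w}$ for a single $\tilde w$ in the fiber of $\tilde u$. Thus $S$ consists of fiber-preserving permutation matrices and is finite --- no bounded-entry argument or induction on blocks is needed. Once repaired this way, your route is complete and arguably more economical than the paper's (it bypasses $\Lambda^+$ and the exchange lemma entirely), though it gives only finiteness of the quotient rather than the paper's sharper identification of it with $\Deck(\varphi)$.
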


The non-existence of isolated vertices is crucial in the above two results since we may lose the control of the number of lifts of graph symmetries if there are isolated vertices.
Roughly speaking, if both $\Gamma$ and $\Lambda$ are connected, for the identity graph symmetry $\sigma$ of $\Gamma$ and a vertex $u$ in $\Lambda$, there is a unique graph symmetry $\mu$ of $\Lambda$ such that $\mu$ fixes $u$ and $\varphi\circ\mu=\sigma\circ\varphi$.
On the other hand, if $\Gamma$ consists of only isolated vertices, then there are at least two such graph symmetries $\mu$ of $\Lambda$.
We defer further discussion about the non-existence condition to Section \ref{sec:isolatedvertices}. 

Here is a more detailed overview of the content of this paper.

\subsection{Finite generation}\label{subsec:FG}
Laurence \cite{MR1356145} proved that $\Aut(A_\Gamma)$ is finitely generated by identifying an elementary finite generating set consisting of graph symmetries, inversions, transvections, and partial conjugations: a (left) transvection is denoted by $T_{v'}^v: v' \mapsto vv'$ 
and a partial conjugation is denoted by $P_C^v$ where $C$ is a union of components of $\Gamma \setminus \st(v)$ 
See Section \ref{subsec:automorphisms_RAAGs} for more precise definitions.
Laurence also found that partial conjugations generate the conjugating automorphism group defined in page \pageref{def:conjugating}.

Let $\varphi:\Lambda\to\Gamma$ be a regular covering map of graphs (possibly with isolated vertices). 
In order to prove Theorem~\ref{introthm:generation_laut}, we first investigate which standard generators of $\Aut(A_\Gamma)$ are liftable.
Liftability of inversions is quite straightforward, and indeed it does not even depend on the regularity of the covering map (See Lemma \ref{lem:lift_sym_inv}). 
In contrast, transvections, partial conjugations and graph symmetries are not always liftable, and the criterions for liftability highly depend on the covering map.

\begin{introthm}[Theorem~\ref{prop:elementary_lift}, Corollary~\ref{lem:lift_transvection2}, Proposition~\ref{lift_partial_2}] \label{introthm:criteria_liftable}
Let $\varphi: \Lambda \to \Gamma$ be a regular covering map of graphs. Then the following hold:
\begin{enumerate}

\item \label{enuma:criteria_liftable} A transvection $T_{v'}^v$ is liftable if $\lk(u') \subseteq \st(u)$ for some $u \in \varphi^{-1}(v)$ and $u' \in \varphi^{-1}(v')$. In addition, the converse holds when $\Gamma$ has no isolated vertex.
\item \label{enumb:criteria_liftable} Let $v \in \Gamma$ be a vertex, and let $C$ be a union of components of $\Gamma \setminus \st(v)$. Then the partial conjugation $P_C^v$ is liftable if $\bar{C} = C$ (see Section~\ref{subsec:lift_part_conj} for the definition $\bar{C}$).
And the converse holds when $\Gamma$ has no isolated vertex.
\item A graph symmetry $\sigma$, considered as an element of $\Aut(A_\Gamma)$, is liftable if and only if it can be lifted through the covering map $\varphi$.
\end{enumerate}
See Section~\ref{sec:pre} for the definitions of $\lk(u)$ and $\st(u)$.
\end{introthm}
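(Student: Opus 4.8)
The three parts are independent in flavor, so I would prove them separately, but the ``if'' directions share a mechanism and the ``only if'' directions share another. For sufficiency I would build an explicit lift, using that a regular cover has deck group $\Deck(\varphi)$ acting freely and transitively on each fiber; this lets me transport one local piece of data around a fiber and obtain a globally defined automorphism of $A_\Lambda$. For necessity I would use the defining feature of a covering map, namely that $\varphi$ restricts to a bijection $\lk(u)\to\lk(\varphi(u))$ for every vertex $u$ of $\Lambda$, together with the hypothesis that $\Gamma$ has no isolated vertex.

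For part (1), given $u\in\varphi^{-1}(v)$ and $u'\in\varphi^{-1}(v')$ with $\lk(u')\subseteq\st(u)$, I note that for each $g\in\Deck(\varphi)$ the pair $(gu,gu')$ also satisfies $\lk(gu')=g\,\lk(u')\subseteq g\,\st(u)=\st(gu)$, so each $T_{gu'}^{gu}$ is a genuine transvection of $A_\Lambda$. As $v\ne v'$, the fibers over $v$ and $v'$ are disjoint, so these transvections act on pairwise distinct generators with multipliers lying in a disjoint fiber; hence they commute, and $F=\prod_{g\in\Deck(\varphi)}T_{gu'}^{gu}$ is a well-defined automorphism with $\phi\circ F=T_{v'}^{v}\circ\phi$, as one checks on standard generators. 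For the converse I would start from a lift $F$ and analyze $F(u')$, whose image under $\phi$ is $vv'$: since $F$ carries the centralizer $\langle u'\rangle\times A_{\lk(u')}$ isomorphically onto the centralizer of $F(u')$, and $\varphi$ sends $\lk(u')$ bijectively onto $\lk(v')\subseteq\st(v)$, I can locate the candidate dominating vertex $u$ (the unique preimage of $v$ adjacent to $u'$, when $v\in\lk(v')$) and use the lift to certify $\lk(u')\subseteq\st(u)$. The no-isolated-vertex hypothesis is what forces the domination to be realized by one preimage of $v$ rather than dispersed across the fiber.

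For part (2), I would lift $P_C^v$ fiberwise: over each $u\in\varphi^{-1}(v)$, conjugate by $u$ the union of components of $\Lambda\setminus\st(u)$ lying over $C$, and assemble these across the fiber. The condition $\bar C=C$ is exactly what makes $\varphi^{-1}(C)$ decompose compatibly into such components, so the local conjugations glue to an automorphism $F$ with $\phi\circ F=P_C^v\circ\phi$; the converse reads the component structure back off through the link bijection, again using no isolated vertex to exclude degenerate components. For part (3), the backward direction is immediate: a graph lift $\tilde\sigma$ of $\sigma$ through $\varphi$ induces an automorphism $F$ of $A_\Lambda$ with $\phi\circ F=\sigma\circ\phi$, verified on generators. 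For the forward direction I would observe that any algebraic lift $F$ satisfies $F^{-1}(\ker\phi)=\ker\phi$, translate this invariance, via the reconstruction of the defining graphs from the groups, into the covering-theoretic lifting criterion for the graph map $\sigma\circ\varphi$, and invoke rigidity of covers without isolated vertices, where a lift is determined by the image of a single vertex, to produce the graph symmetry $\tilde\sigma$.

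The real difficulty is concentrated in the ``only if'' directions. The abelianized relation $\phi_*F_*=(T_{v'}^v)_*\phi_*$ only controls the fiberwise column sums of the matrix of $F$ and is far too weak to recover a link inclusion, so the necessity arguments must use genuinely nonabelian data, namely centralizers and the star and link parabolic subgroups and the way $F$ permutes them, and must convert an a priori ``spread-out'' lift into one witnessed by a single dominating or conjugating vertex; this is precisely where excluding isolated vertices is essential. Extracting an honest graph symmetry of $\Lambda$ from a merely algebraic lift in part (3) is the most delicate point, since a priori $F$ need not be induced by any graph automorphism at all.
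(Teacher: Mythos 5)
Your ``if'' direction for part (1) is essentially the paper's proof (Lemma~\ref{lem:lift_transvection}): transport the single witness pair $(u,u')$ around the fiber and take the product of the resulting transvections. But your ``if'' direction for part (2) contains a genuine error. There is in general no component of $\Lambda\setminus\st(u)$ ``lying over $C$'': the component $C(u,\widetilde{B})$ of $\Lambda\setminus\st(u)$ containing a fixed component $\widetilde{B}$ of $\varphi^{-1}(B)$ typically contains the stars of all the other vertices of $\varphi^{-1}(v)$ and maps onto far more of $\Gamma$ than $C$ --- this is exactly why $\bar{C}$ has to be defined through the intersection $D=\bigcap_{u\in\varphi^{-1}(v)}C(u,\widetilde{B})$ (Lemma~\ref{lem:Minimality}). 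Moreover, when $D$ is adjacent to every $\st(u)$, the assembled product $\widetilde{P}=\widetilde{P}^{u_1}_{C_1}\cdots\widetilde{P}^{u_n}_{C_n}$ does \emph{not} satisfy $\phi\circ\widetilde{P}=P_C^v\circ\phi$: as computed in Proposition~\ref{prop:partial_lift}, it satisfies $\phi\circ\widetilde{P}=P_{\bar{B}}^v\,\iota^{(n-1)}\circ\phi$, where $\iota$ is the inner automorphism by $v$, so one must still correct by a lift of $\iota^{-(n-1)}$ (Lemma~\ref{lem:inner_liftable}). Your glueing claim skips precisely this correction, and without it the asserted identity on generators is false.

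The ``only if'' directions and part (3) are where the real content lies, and there your proposal offers no workable argument. For transvections you explicitly dismiss the homological route as ``far too weak,'' but that is exactly the paper's route: using the exchange lemma (Lemma~\ref{ExchangeLemma}) and essential-support rigidity (which is where the no-isolated-vertex hypothesis enters, via Lemma~\ref{FarVertices}), the paper shows that the matrix of any lift on $H_1(A_\Lambda)$ is a \emph{blow up} of the matrix of $f$ --- each block column has a single nonzero entry, far more than control of column sums --- and then contradicts the block upper triangularity enjoyed by essential fiber-preserving automorphisms (Corollary~\ref{lem:lift_transvection2}); your alternative centralizer sketch never actually explains how the lift ``certifies'' $\lk(u')\subseteq\st(u)$. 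For partial conjugations, necessity in the paper is a byproduct of Theorem~\ref{thm:liftable_conjugating}, a delicate run through Laurence's algorithm using conjugating lifts (Lemma~\ref{lem:conj_lift}) and left-most-word arguments; ``reads the component structure back off through the link bijection'' is not a proof. Finally, for part (3) your plan to convert $F(\ker\phi)=\ker\phi$ into the covering-space lifting criterion has no bridge: $\ker\phi$ is a normal subgroup of the RAAG $A_\Lambda$, whereas the topological criterion concerns subgroups of the free group $\pi_1(\Gamma)$, and nothing you propose relates the two. The paper instead builds the graph symmetry $\mu$ vertex by vertex out of essential supports, via Laurence's theorem (Theorem~\ref{thm:iso_thm} inside Theorem~\ref{thm:lift_iso}). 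Note also that part (3) holds with no isolated-vertex hypothesis at all, so the ``rigidity'' you invoke (which in any case requires connectivity rather than the absence of isolated vertices) is not the right tool there.
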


An isomorphism $f$ of $A_\Gamma$ is said to be \emph{essential} if for any vertex $v\in\Gamma$, a cyclically reduced word for $f(v)$ contains $v$ as a letter (see Section~\ref{sec:pre} for the definition of a cyclically reduced word).
If $\Gamma$ has no isolated vertices, then the covering map $\varphi$ restricts our choice of a lift of a liftable essential automorphism in the following sense. 

\begin{repcorollary}{lem:deck_lift}
Let $\varphi:\Lambda\rightarrow\Gamma$ be a regular covering map of graphs without isolated vertex.
Let $f\in\Aut(\Gamma)$ be a liftable automorphism which is essential. For any lift $F$ of $f$, then, there exists a deck-transformation $\mu$ such that $F\mu$ is a lift of $f$ which is essential.
\end{repcorollary}

Indeed, what Laurence proved is exactly the statement which is obtained in the statement of Theorem~\ref{introthm:generation_laut} by replacing $\LAut(\varphi)$ to $\Aut(A_\Gamma)$ and removing the word `liftable'. While following Laurence's proof, by carefully following the restrction given by Corollary~\ref{lem:deck_lift}, we first show that a conjugating automorphism which is liftable is a product of liftable elementary automorphisms, which is the latter statement of Theorem~\ref{introthm:generation_laut}. 
And then we prove the finite generation of $\LAut(\varphi)$ using the previous result, which completes the proof of Theorem~\ref{introthm:generation_laut}.

\subsection{Birman--Hilden theory}
Our original motivation for the study of the short exact sequence $1\to\ker\Phi\to\FAut(\varphi)\to \LAut(\varphi)\to 1$ was the famous results of Birman and Hilden. In \cite{MR325959}, they studied regular branched coverings of surfaces to find a concrete presentation of the mapping class group of genus two from the presentation of braid groups.
A key result of Birman--Hilden's work is the existence of a short exact sequence asserting that the quotient of the symmetric mapping class group of a surface by the deck transformation group of the covering is isomorphic to the liftable mapping class group if a hyperelliptic covering is given.
They then show in \cite{MR321071} that such a short exact sequence exists for every regular unbranched surface covering.
The precise statements can be found in Margalit--Winarski's survey article \cite[Proposition 3.1]{MR4275077}.

Note that a regular covering map $\varphi:\Lambda\rightarrow\Gamma$ of graphs induces a branched covering map between the 2-skeletons of the Salvetti complexes associated to $\Lambda$ and $\Gamma$.
With that analogy in mind, we may wish to obtain a short exact sequence from $\varphi$ similar to the one coming from Birman--Hilden theory. In general, however, there are lifts of the identity which have infinite order, which make things more complicated; see Figure \ref{CommutatorTransvection}.
Nevertheless, in the spirit of the Birman--Hilden theory applied to $\Aut(A_\Gamma)$, we are able to show the following corollary of Theorem \ref{introthm:lift_identity}:

\begin{introcoro}\label{introcoro:SES}
Let $\Gamma$, $\Lambda$ and $\varphi$ be given as in Theorem~\ref{introthm:generation_laut}.
Let $H_\Gamma: \Out(A_\Gamma) \to \operatorname{GL}(n, \mathbb{Z})$ be the homological representation obtained by abelianization, and similarly $H_\Lambda$ for $\Lambda$.
Then we have the following short exact sequence
$$
1 \to \Deck(\varphi) \to H_\Lambda(\FOut(\varphi)) \to H_\Gamma(\LOut(\varphi)) \to 1.
$$
\end{introcoro}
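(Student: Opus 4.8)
The plan is to push everything through abelianization and show that the only portion of the lifts-of-the-identity that survives the homological representation is the deck group. Throughout write $m=|V\Lambda|$, $n=|V\Gamma|$, so that $H_\Lambda$ and $H_\Gamma$ take values in $\GL(m,\Z)$ and $\GL(n,\Z)$, and recall that inner automorphisms act trivially on abelianizations, so the outer groups $\FOut(\varphi)$ and $\LOut(\varphi)$ may be replaced by $\FAut(\varphi)$ and $\LAut(\varphi)$ when computing homological images.

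First I would construct the middle map. Since $\varphi$ is surjective on vertices, the induced map $\phi_*\colon \Z^m\to\Z^n$ on abelianizations is surjective, with $\ker\phi_*$ spanned by the differences $e_u-e_{u'}$ over vertices in a common fiber. For $F\in\FAut(\varphi)$ with $\Phi(F)=f$, the defining identity $f\circ\phi=\phi\circ F$ abelianizes to $H_\Gamma(f)\circ\phi_*=\phi_*\circ H_\Lambda(F)$. Two consequences follow: $H_\Lambda(F)$ preserves $\ker\phi_*$ (apply the identity to $z\in\ker\phi_*$), and, because $\phi_*$ is surjective, $H_\Lambda(F)$ determines $H_\Gamma(f)$ as the induced map on $\Z^m/\ker\phi_*\cong\Z^n$. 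Hence the assignment $\psi\colon H_\Lambda(F)\mapsto H_\Gamma(\Phi(F))$ is a well-defined homomorphism $H_\Lambda(\FOut(\varphi))\to H_\Gamma(\LOut(\varphi))$, and it is surjective because $\Phi\colon\FAut(\varphi)\to\LAut(\varphi)$ is surjective.

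Next I would place $\Deck(\varphi)$ inside the kernel, injectively. Each deck transformation $\mu$ induces an automorphism of $A_\Lambda$ permuting the standard generators by the fiberwise vertex permutation; since $\varphi\circ\mu=\varphi$, it is a lift of the identity, so $\mu\in\ker\Phi\subseteq\FAut(\varphi)$ and $\Phi(\mu)=\mathrm{id}$, whence $H_\Lambda(\mu)\in\ker\psi$. As $\varphi$ is a regular covering without isolated vertices, the deck action on vertices is free, so a nontrivial $\mu$ moves a vertex and $H_\Lambda(\mu)$ is a nontrivial permutation matrix; thus $\mu\mapsto H_\Lambda(\mu)$ embeds $\Deck(\varphi)$ into $\ker\psi$. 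The heart of the proof is the reverse inclusion. By the previous paragraph $\ker\psi=H_\Lambda(N)$, where $N=\{F\in\FAut(\varphi):\Phi(F)\in\IA(A_\Gamma)\}$ is the set of fiber-preserving lifts of liftable Torelli elements of $A_\Gamma$. I would first reduce $H_\Lambda(N)$ to $H_\Lambda(\ker\Phi)$ by checking that every liftable Torelli element of $A_\Gamma$ admits a lift lying in $\IA(A_\Lambda)$: a liftable partial conjugation $P_C^v$ with $\bar C=C$ lifts to a product of partial conjugations of $A_\Lambda$ (Theorem~\ref{introthm:criteria_liftable}\ref{enumb:criteria_liftable}), which lies in $\IA(A_\Lambda)$, and the conjugating case is controlled by the moreover clause of Theorem~\ref{introthm:generation_laut}; the remaining commutator-transvection generators of $\IA(A_\Gamma)$ should be verified to lift fiberwise to commutator transvections of $A_\Lambda$, again Torelli. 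Granting a Torelli lift $F_0$ of $\Phi(F)$, one has $FF_0^{-1}\in\ker\Phi$ and $H_\Lambda(F)=H_\Lambda(FF_0^{-1})$, so $\ker\psi=H_\Lambda(\ker\Phi)$. Finally, Theorem~\ref{introthm:lift_identity} asserts that $\ker\Phi$ is commensurable to a subgroup of $\IA(A_\Lambda)=\ker H_\Lambda$, so $H_\Lambda(\ker\Phi)\cong\ker\Phi/(\ker\Phi\cap\IA(A_\Lambda))$ is finite and, by the structure underlying that commensurability, is exactly $\Deck(\varphi)$. Together with the embedding above this yields $\ker\psi=\Deck(\varphi)$ and hence the asserted short exact sequence.

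The main obstacle is precisely the inclusion $\ker\psi\subseteq\Deck(\varphi)$. The ambient group $\{M\in\GL(m,\Z):\phi_*M=\phi_*\}$ that contains all candidate kernel elements has many finite-order matrices that are not fiber permutations, so finiteness of $H_\Lambda(\ker\Phi)$ alone does not identify the kernel; one must use that these matrices arise from genuine lifts of the identity and invoke Theorem~\ref{introthm:lift_identity}, which is exactly where the absence of isolated vertices enters, to conclude that such a lift is Torelli up to a deck transformation. The secondary technical point, namely the reduction $H_\Lambda(N)=H_\Lambda(\ker\Phi)$ via Torelli lifts of liftable Torelli elements, is routine for partial conjugations but will require care for the commutator-transvection generators.
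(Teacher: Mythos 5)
Your architecture matches the paper's own proof (Corollary~\ref{cor:Birman-Hilden}): build the induced map between homological images, and identify its kernel with $\Deck(\varphi)$ by means of Theorem~\ref{CommensurableTorelli}. The construction of $\psi$ and the injection $\Deck(\varphi)\hookrightarrow\ker\psi$ are fine (although your appeal to freeness of the deck action is both unnecessary and, for disconnected $\Lambda$, false under the paper's definition of regularity; injectivity holds simply because a nontrivial graph automorphism moves some vertex and hence gives a nontrivial permutation matrix). The genuine gap is in your key reduction $H_\Lambda(N)=H_\Lambda(\ker\Phi)$, i.e.\ the claim that every element of $\IA(A_\Gamma)\cap\LAut(\varphi)$ admits a lift in $\IA(A_\Lambda)$. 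You propose to verify this on generators of $\IA(A_\Gamma)$ (liftable partial conjugations, conjugating automorphisms, commutator transvections). This does not work: an element of the intersection $\IA(A_\Gamma)\cap\LAut(\varphi)$ is not known to be a product of \emph{liftable} Torelli generators --- intersections of subgroups do not inherit generating sets, and statements of precisely this shape are the hard content of this paper (Theorem~\ref{thm:liftable_conjugating} and Theorem~\ref{thm:lifable_group} require running Laurence's algorithm with the liftability constraint carried along, not a generator-by-generator check). Moreover, liftability of an individual commutator transvection of $A_\Gamma$ is nowhere established, and there is no reason a liftable one should lift ``fiberwise''. So the crucial inclusion $\ker\psi\subseteq H_\Lambda(\Deck(\varphi))$ is not proved.

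The step can be repaired using only tools already in the paper, and this is what the paper's (admittedly compressed, ``easy exercise'') argument amounts to, so you should argue directly rather than reduce to $\ker\Phi$. Given $F\in\FAut(\varphi)$ with $f=\Phi(F)\in\IA(A_\Gamma)$, first observe that $f$ is automatically essential: $f(v)$ is conjugate to a cyclically reduced word $c(v)$ whose abelianization equals that of $v$, so $v$ must occur in $c(v)$, i.e.\ $v\in\esupp f(v)$. Corollary~\ref{lem:deck_lift} then yields $\mu\in\Deck(\varphi)$ with $F\mu$ essential. Now the counting argument from the proof of Theorem~\ref{CommensurableTorelli} --- equivalently, the blow-up lemma of Section~\ref{sec:lift_transv} combined with Lemma~\ref{FarVertices} --- shows that $H_\Lambda(F\mu)$ is a blow up of $H_\Gamma(f)=I$, hence a fiber-block permutation matrix, and essentiality of $F\mu$ together with Lemma~\ref{FarVertices} forces the unique nonzero entry in the column of each vertex $u$ to sit at $u$ itself; thus $F\mu\in\IA(A_\Lambda)$. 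Consequently $H_\Lambda(F)=H_\Lambda(\mu)^{-1}\in H_\Lambda(\Deck(\varphi))$, which is exactly the missing inclusion. Your final-step identification $H_\Lambda(\ker\Phi)=H_\Lambda(\Deck(\varphi))$ is correct, but note it needs not just the abstract isomorphism $\FD(\varphi)/(\FD(\varphi)\cap\IA(A_\Lambda))\cong\Deck(\varphi)$ but the equality $\FD(\varphi)=\Deck(\varphi)\cdot(\FD(\varphi)\cap\IA(A_\Lambda))$ established in the proof of Theorem~\ref{CommensurableTorelli}.
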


This short exact sequence is somehow analogous to the short exact sequence obtained from the Birman-Hilden theory, even though it is clearly weaker as it only holds on the level of homology.

We remark that the kernel of the projection $\FOut(\varphi) \to \LOut(\varphi)$ is much bigger than $\Deck(\varphi)$ so that we cannot remove the homological representations from the above short exact sequence.
This also support our choice of terminology ``fiber-preserving (outer) automorphism group'' instead of ``symmetric (outer) automorphism group''. From this perspective, a symmetric outer automorphism $F$ would be an automorphism of $A_\Lambda$ which commutes with $\Deck(\varphi)$. Under these definitions, the symmetric automorphism group can be shown to be a proper subgroup of the fiber-preserving outer automorphism group.
In this paper, the theory is developed under the setting of automorphism groups, not of outer automorphism groups, but the results still hold after passing to the quotient by the inner automorphism group.

\begin{remark}
The Salvetti complex is a classifying space of a right-angled Artin group (RAAG).
Every branched covering map of Salvetti complexes with a single branched point is induced from an unbranched covering of graphs.
But the converse may not hold. Namely, there exists a covering of graphs which does not induce a branched covering of Salvetti complexes.
In Figure \ref{CommutatorTransvection} of page \pageref{CommutatorTransvection}, the $2$-fold covering map sends a hexagon to a triangle.
Although the Salvetti complex associated to the triangle has a $3$-dimensional cube, the Salvetti complex associated to the hexagon does not.
\end{remark}


\subsection{Comparison with relative outer automorphism groups}
Day--Wade \cite{MR4072157} developed a beautiful theory for an infinite family of subgroups of outer automorphism groups of RAAGs, so called \emph{relative automorphism groups}.
This family includes $\Out(A_\Gamma)$ itself, and each member admits a normal series whose factors have simple forms.
Similar to Theorem \ref{introthm:generation_laut}, Day--Wade \cite[Theorem D]{MR4072157} said that every relative automorphism group is generated by a subset of Laurence's generating set.

A subgroup of a RAAG $A_\Gamma$ is said to be a \emph{special subgroup} if it is generated by an induced subgraph of $\Gamma$.
For two collections $\mathcal{G}$ and $\mathcal{H}$ of special subgroups of $A_\Gamma$, the \emph{relative outer automorphism group} $\Out^0(A_\Gamma; \mathcal{G}, \mathcal{H}^t)$ is the group of outer automorphisms that preserve the conjugacy classes of all $G \in \mathcal{G}$ and fix the elements of all $H \in \mathcal{H}$.
Theorem D in \cite{MR4072157} says that the set of all inversions, transvections and partial conjugations in $\Out^0(A_\Gamma; \mathcal{G}, \mathcal{H}^t)$ generates $\Out^0(A_\Gamma; \mathcal{G}, \mathcal{H}^t)$ itself.

That being said, it seems hard to establish a concrete relationship between the collection of liftable (outer) automorphism groups and the collection of relative (outer) automorphism groups.
Proposition 3.9 in \cite{MR4072157} deduces that some inversions may not be contained in a relative outer automorphism group. But every liftable outer automorphism group contains all inversions by Lemma \ref{lem:lift_sym_inv} in this paper.
In fact, if a relative outer automorphism group $\Out^0(A_\Gamma; \mathcal{G}, \mathcal{H}^t)$ is a liftable automorphism group for some regular covering map of graphs, then $\mathcal{H}$ must be empty.



\subsection{Graphs with isolated vertices}\label{sec:isolatedvertices} 
Here we highlight some distinct behaviours of covering maps of graphs with and without isolated vertices.

\begin{example}\label{ex:freegroups}
For a graph $\Gamma=\{a,b,c\}$ consisting of $3$ isolated vertices, let 
\[\Lambda=\{a_1,a_2,a_{3},b_{1},b_2,b_{3},c_1,c_2,c_{3}\}\] 
be a regular $3$-covering of $\Gamma$ such that the covering map $\varphi:\Lambda\to\Gamma$ sends $a_{i}$, $b_i$ and $c_i$ to $a$, $b$ and $c$, respectively. 
Then any automorphism of $\Aut(\F_3)$ is liftable, i.e. there exists a surjective homomorphism $\Phi:\FAut(\F_{9})\to\Aut(\F_3)$.

The kernel $\ker \Phi$ of $\Phi$ obviously contains the subgroup isomorphic to $\IA(\F_3)\times\IA(\F_3)\times\IA(\F_3)$. 
It also contains the subgroup which is isomorphic to $\ker\phi$ for the homomorphism $\phi:\F_9\to\F_3$ induced from $\varphi$. However, there are elements of $\IA(\F_9)$ which are not contained in $\ker \Phi$.
For instance, the inner automorphism by $b_1a_3a_2^{-1}b_2^{-1}$ in $\Aut(A_\Lambda)$ is in $\ker\Phi$ but the inner automorphism by $b_1a_3a_2^{-1}c_1b_2^{-1}c_1^{-1}$ is not. 
We could not obtain any relation between $\ker\Phi$ and $\IA(A_\Lambda)$, while it can be known if there are no isolated vertices in $\Gamma$ (and thus $\Lambda$), as in Theorem~\ref{introthm:lift_identity}.
\end{example}

If $\Gamma$ has no isolated vertices, as seen in Section~\ref{subsec:FG}, we can say that liftability of automorphisms is restricted by the graph structure of $\Gamma$ and the covering map $\varphi$.
On the other hand, if $\Gamma$ has at least two isolated vertices, then there is an example in which the restriction no longer holds (Example~\ref{Ex:FreeGroupGraphSymmetry}).
Thus, we finish this section with several questions.

\begin{question}
Let $\varphi:\Lambda\to\Gamma$ be a regular covering map of graphs (possibly, having isolated vertices).
\begin{enumerate}
\item Can we find a necessary condition for an automorphism to be liftable with respect to $\varphi$?
\item Can we characterize the kernel of $\Phi:\FAut(\varphi)\to\LAut(\varphi)$? Or, can we obtain any relation between the kernel and the Torelli subgroup $\IA(A_\Lambda)$?
\end{enumerate}
Even the case when $\Gamma$ consists of $m$ isolated vertices would be interesting as $\FAut(\varphi)$ and $\ker\Phi$ are subgroups of $\Out(\F_{mn})$ where $n$ is the degree of the covering map.
\end{question}

\subsection{Guide to readers}
Section \ref{sec:pre} introduces Servatius and Laurence's works about right-angled Artin groups and their automorphism groups.

In Section \ref{sec:lift_element}, we explain what kinds of elementary automorphisms are liftable. Section \ref{subsec:simplicial} consists of elementary facts about surjective homomorphisms induced from regular covering maps of graphs. We can see the proof of a sufficient condition for the liftability of a transvection in Section \ref{subsec:lift_inv_transv} and a partial conjugation in Section \ref{subsec:lift_part_conj}.

Section \ref{sec:lift_isom} is technical but contains the essential lemmas of our paper. Here it becomes apparent why we need to consider only graphs without isolated vertices. Theorem \ref{thm:lift_iso} is a generalization of Laurence's work \cite{MR1356145} that explains one of the behavior of liftable graph symmetries. 

Based on the results in Section~\ref{sec:lift_isom}, the content of Section \ref{sec:liftable_conjugating} is the proof of the finite generation of the group of all liftable conjugating automorphisms. And then we prove Theorem~\ref{introthm:generation_laut} in Section \ref{sec:gen_liftable}.
In Section \ref{sec:lift_identity}, we investigate the lifts of the identity and prove Theorem \ref{introthm:lift_identity}.
Finally, in Section \ref{sec:lift_transv}, we complete the discussion about liftability of transvections, which finishes the proof of Theorem \ref{introthm:criteria_liftable}.

\subsection{Acknowledgement}
We would like to thank Sang-hyun Kim, Hyungryul Baik, Thomas Koberda, Junseok Kim, Richard Wade and Dan Margalit for useful comments.
The first and second authors were supported by the National Research Foundation of Korea (NRF) grant No. 2021R1C1C200593811 from the Korea government(MSIT). The third author is partially supported by Samsung Science and Technology Foundation under Project Number SSTF-BA1702-01.

%
%
%
%

\section{Preliminaries} \label{sec:pre}
In this paper, a graph $\Gamma$ is a finite 1-dimensional simplicial complex, and its vertex and edge sets are denoted by $V\Gamma$ and $E\Gamma$, respectively. The cardinality of $V\Gamma$ is denoted by $|\Gamma|$.
By a subgraph $\Gamma'$, we always mean an induced one, i.e., $\Gamma'$ is the subgraph induced by $V\Gamma'\subset V\Gamma$. Moreover, we denote the subgraph induced by $V\Gamma\setminus V\Gamma'$ by $\Gamma\setminus\Gamma'$.
Lastly, any maps between graphs are assumed to be simplicial.

Associated to $\Gamma$ is the \emph{right-angled Artin group} (RAAG) $A_\Gamma$ which has the following presentation:
\[\langle v \in V\Gamma \mid [u, v]  = 1 \text{ for all }\{u, v\} \in E\Gamma  \rangle.\]
A \textit{word} $w$ in the alphabet $V\Gamma$ is a concatenation of elements in $V\Gamma \cup (V\Gamma)^{-1}$ and the \emph{word length} of $w$ is the number of elements appreating in the concatenation. 
Any element of $A_\Gamma$ corresponds to an equivalence class of words in the alphabet $V\Gamma$; two words are \textit{equivalent} if they correspond to the same element of $A_\Gamma$. 
With that in mind, we say that an element $g$ in $A_\Gamma$ is \emph{represented} by a word $w$ (in the alphabet $V\Gamma$) if $w$ is one of the words in the equivalence class corresponding to $g$. 

A word $w$ is said to be \textit{reduced} if it has minimal word length in its equivalence class. 
Among all words representing conjugates of $g$, one having minimal word length is said to be a \textit{cyclically reduced word} for $g$. If $g$ can be represented itself by a cyclically reduced word, then $g$ is said to be \textit{cyclically reduced}.
The \textit{support} and the \emph{essential support} of $g$, denoted by $\supp(g)$ and $\esupp(g)$, are the sets of vertices appearing in reduced and cyclically reduced words for $g$, respectively (these two sets are well-defined for elements in RAAGs). 

Let $v\in\Gamma$ be a vertex. The degree of $v$, denoted by $\deg_\Gamma v$, is the number of vertices which are adjacent to $v$. The link of $v$, denoted by $\lk_\Gamma(v)$, is the subgraph of $\Gamma$ induced by vertices adjacent to $v$, and the star of $v$, denoted by $\st_\Gamma(v)$, is the subgraph of $\Gamma$ induced by $\lk_\Gamma(v)$ and $v$. In particular, $|\lk_\Gamma(v)|=\deg_\Gamma v$ and $|\st_\Gamma(v)|=\deg_\Gamma v+1$. If there is no confusion, we omit the underlying graph when talking about degrees, links and stars.

Using links and stars, we can define a preorder $\lesssim$, called \emph{link-star order}, on $V\Gamma$ by $$u \lesssim v ~\text{if and only if}~ \lk(u) \subseteq \st(v).$$
We write $V\Gamma_{\gtrsim v}$ for the set of vertices $v_i$ which satisfy $v_i\gtrsim v$.
Two vertices $u$ and $v$ are said to be \textit{equivalent} if $u \lesssim v$ and $v \lesssim u$, and the equivalence class of $v$ is denoted by $[v]$.
The following are two well-known facts about equivalence classes which will be used frequently in this paper.
\begin{enumerate}
    \item The equivalence class $[v]$ induces either a complete subgraph or a totally disconnected subgraph.
    \item For a graph symmetry $\sigma$ of $\Gamma$, $[v]$ and $[\sigma(v)]$ have the same cardinality, and $[v]$ is complete if and only if $[\sigma(v)]$ is complete.
\end{enumerate}

\begin{lemma}\label{lemma:adjacent}
For two vertices $v_1$ and $v_2$ in $\Gamma$, if they are adjacent, then any vertex $u_1$ in $[v_1]$ is adjacent to any vertex $u_2$ in $[v_2]$ unless $u_1=u_2$.
\end{lemma}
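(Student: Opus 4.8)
The plan is to unwind the equivalence classes through the link--star preorder and to \emph{transport} the adjacency of $v_1$ and $v_2$ first onto $u_1$ and then onto $u_2$. The two facts I would lean on are that $u_1\in[v_1]$ forces $v_1\lesssim u_1$, hence $\lk(v_1)\subseteq\st(u_1)$, and that $u_2\in[v_2]$ forces $v_2\lesssim u_2$, hence $\lk(v_2)\subseteq\st(u_2)$; the reverse containments defining the equivalences will not be needed.

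I would first separate out the degenerate case $[v_1]=[v_2]$. Here $v_1$ and $v_2$ are equivalent \emph{and} adjacent, so their common class contains two adjacent vertices and therefore induces a complete subgraph by the first listed fact about equivalence classes; consequently any two distinct vertices of it, in particular $u_1\ne u_2$, are adjacent. Assuming now $[v_1]\ne[v_2]$, note that $u_1\ne v_2$, since $u_1=v_2$ would place $u_1$ in both classes and collapse them. The main computation is then the chain $v_2\in\lk(v_1)\subseteq\st(u_1)$, which together with $v_2\ne u_1$ yields $v_2\in\lk(u_1)$, i.e.\ $u_1$ is adjacent to $v_2$; rewriting this as $u_1\in\lk(v_2)\subseteq\st(u_2)$ and invoking the standing hypothesis $u_1\ne u_2$ gives $u_1\in\lk(u_2)$, which is exactly the desired adjacency.

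The one point requiring care --- and the source of the clause ``unless $u_1=u_2$'' --- is that every containment lands in a star $\st(\cdot)=\{\cdot\}\cup\lk(\cdot)$, so a membership produces an \emph{edge} only after the equality with the central vertex has been ruled out. The hypothesis $u_1\neq u_2$ rules out the last such equality, while $v_2\neq u_1$ is guaranteed precisely by having set aside the case $[v_1]=[v_2]$; that degenerate case is the only place where the preorder alone cannot exclude the bad equality and where one must instead use the completeness of a class containing an edge. I expect no further obstacle, as the remaining steps are immediate from the definitions.
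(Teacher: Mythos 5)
Your proof is correct and follows essentially the same route as the paper's: the same case split on whether $[v_1]=[v_2]$ (completeness of the class handles that case), and the same two-step transport $v_2\in\lk(v_1)\subseteq\st(u_1)$ followed by $u_1\in\lk(v_2)\subseteq\st(u_2)$, ruling out the equality with the star's central vertex at each step. The only cosmetic difference is that in the second case you invoke the hypothesis $u_1\neq u_2$ directly, whereas the paper notes this is automatic there since $u_1\notin[v_2]=[u_2]$; both are valid.
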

\begin{proof}
If $v_1$ and $v_2$ are equivalent, then $[v_1](=[v_2])$ induces a complete subgraph, and thus, the lemma holds.
Otherwise, since $v_2\in\lk(v_1)$ and $v_2\notin [v_1]$, we have $v_2\in\lk(u_1)$, i.e., $u_1\in\lk(v_2)$. 
Then $\lk(u_2)$ contains $u_1$ since $u_1$ is not contained in $[v_2]=[u_2]$.
\end{proof}

Sometimes, it is convenient to order the vertices of $\Gamma$ in a way that is coherent with the link-star order.
\begin{lemma}\label{OrderOnV}
The vertices of $\Gamma$ can be ordered in such a way that $v_i \gnsim v_j$ implies $i < j$.
\end{lemma}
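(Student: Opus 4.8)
The plan is to show that the strict relation $\gnsim$ (where $v \gnsim w$ means $v \gtrsim w$ but $v \not\sim w$, that is, $v$ and $w$ are not equivalent) is a strict partial order on the finite set $V\Gamma$, and then to enumerate the vertices along any linear extension of it, arranging that larger elements receive smaller indices.

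First I would verify that $\gnsim$ is irreflexive and transitive. Irreflexivity is immediate, since $v\sim v$ for every vertex, so $v \gnsim v$ never holds. For transitivity, suppose $u \gnsim v$ and $v \gnsim w$. Since $\lesssim$ is a preorder, $u \gtrsim v \gtrsim w$ yields $u \gtrsim w$, so it remains only to rule out $u \sim w$. If $u \sim w$ held, then from $w \gtrsim u \gtrsim v$ we would obtain $w \gtrsim v$, which combined with $v \gtrsim w$ forces $v \sim w$, contradicting $v \gnsim w$. Hence $u \not\sim w$, so $u \gnsim w$. In particular $\gnsim$ is also asymmetric, since $u \gnsim v$ and $v \gnsim u$ together would give $u \sim v$.

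Having established that $\gnsim$ is a strict partial order on $V\Gamma$, I would invoke the standard fact that every strict partial order on a finite set admits a linear extension: repeatedly extract a $\gnsim$-maximal vertex, which exists at each stage because $\gnsim$ restricts to a strict partial order on the nonempty finite set of remaining vertices, and list it next. This produces an enumeration $v_1, v_2, \dots, v_n$ of $V\Gamma$ in which every vertex is placed before any vertex strictly below it in the link-star order. By construction, $v_i \gnsim v_j$ forces $v_i$ to be extracted before $v_j$, that is $i < j$, which is exactly the desired conclusion.

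The only point requiring care is the bookkeeping of direction: because we want larger vertices to carry smaller indices, the enumeration must proceed from the top of the order downward, extracting maximal elements first. Vertices lying in a common equivalence class $[v]$ are incomparable under $\gnsim$ and may be listed in any order among themselves, which creates no conflict with the required implication. I do not expect any genuine obstacle beyond the verification of transitivity above; the statement is essentially a direct application of the existence of topological orderings of finite posets.
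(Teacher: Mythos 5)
Your proof is correct and follows essentially the same route as the paper: both arguments repeatedly extract $\gnsim$-maximal vertices from the remaining set and rely on transitivity of $\gnsim$ for correctness. The only difference is that you explicitly verify that $\gnsim$ is a strict partial order (in particular proving the transitivity claim, by ruling out $u \sim w$ via the preorder axioms), whereas the paper simply asserts this fact without proof.
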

\begin{proof}
This can be done by first gathering all the maximal vertices in $V\Gamma$. 
After ignoring the previously chosen vertices, we choose maximal vertices among the remaining ones and repeat this process until we are done. Then the order on vertices we have constructed satisfies the lemma due to the following fact: if $v\gnsim u$ and $u\gnsim w$, then $v\gnsim w$.
\end{proof}

For an element $g\in A_\Gamma$, let $x$ be a cyclically reduced word representing $g$. The link of $g$, denoted by $\lk(g)$ (or $\lk(x)$), is the subgraph of $\Gamma$ induced by vertices which are adjacent to every vertex in $\supp(x)$ (in particular, any vertex in $\lk(g)$ is not contained in $\supp(x)$).
In RAAGs, centralizers of cyclically reduced words are well described using the links of the words.

\begin{theorem}[Centralizer theorem \cite{MR1023285}] \label{thm:centralizer}
Each cyclically reduced word $x$ in $A_\Gamma$ can be written as $x_1^{r_1} \dots x_k^{r_k}$ such that the centralizer of $x$, denoted by $C(x)$, is $$C(x) = \langle x_1 \rangle \times \dots \times \langle x_k \rangle \times \langle \lk(x) \rangle.$$
\end{theorem}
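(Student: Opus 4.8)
The plan is to prove the theorem in two stages: first localize the centralizer to a special subgroup and peel off the $\langle\lk(x)\rangle$ factor, and then compute the centralizer inside that special subgroup by decomposing its defining graph into join factors. Throughout I write $S=\supp(x)$ and let $\Gamma_S$ be the full subgraph on $S$, so that $x$ lies in the special subgroup $A_S=\langle S\rangle$; note $x$ being cyclically reduced means $\supp(x^n)=S$ for all $n\geq 1$, which will be used freely.

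The first stage records the trivial inclusion $\langle\lk(x)\rangle\subseteq C(x)$ (every vertex of $\lk(x)$ is adjacent to every letter of $x$, hence commutes with $x$) and then establishes the reverse localization $C(x)\subseteq A_{S\cup\lk(x)}$. The latter is the substantive soft step, and I would prove it from the normal form for RAAGs: if $g$ commutes with the cyclically reduced word $x$, then comparing the normal forms of $gx$ and $xg$ forces every letter of $g$ lying outside $S$ to commute with every letter of $x$, i.e.\ to lie in $\lk(x)$. Since $\lk(x)$ is disjoint from $S$ and every vertex of $\lk(x)$ is adjacent to every vertex of $S$, the induced subgraph on $S\cup\lk(x)$ is the join $\Gamma_S\ast\lk(x)$, so $A_{S\cup\lk(x)}=A_S\times\langle\lk(x)\rangle$. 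As $x\in A_S$, the centralizer of $x$ in this direct product is $C_{A_S}(x)\times\langle\lk(x)\rangle$, and combined with the localization this gives $C(x)=C_{A_S}(x)\times\langle\lk(x)\rangle$. It remains to identify $C_{A_S}(x)$.

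For the second stage I would decompose $\Gamma_S$ into its maximal join factors $\Gamma_S=J_1\ast\cdots\ast J_p$, which correspond to the connected components of the complement graph. This yields $A_S=A_{J_1}\times\cdots\times A_{J_p}$ and a corresponding factorization $x=y_1\cdots y_p$ with $y_i\in A_{J_i}$ cyclically reduced and $\supp(y_i)=J_i$ (the projection of a full-support cyclically reduced word to a direct factor remains cyclically reduced with full support). Since the factors commute, $C_{A_S}(x)=\prod_i C_{A_{J_i}}(y_i)$. When $J_i$ is a single vertex $v$ the factor $A_{J_i}\cong\mathbb{Z}$ and $y_i=v^{r}$, so $C_{A_{J_i}}(y_i)=\langle v\rangle$; these singleton factors are exactly the vertices of $S$ adjacent to all others, i.e.\ the central vertices of $A_S$. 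The crux is the remaining case, where $J_i$ has at least two vertices and is join-indecomposable: then $A_{J_i}$ has trivial center and $\lk(y_i)=\emptyset$ within $\Gamma_{J_i}$ (a cone vertex would split off a join factor), and I claim $C_{A_{J_i}}(y_i)$ is infinite cyclic.

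The main obstacle is precisely this claim, where normal-form bookkeeping no longer suffices and geometry is needed. The clean route I would take is to let $A_{J_i}$ act properly and cocompactly on the CAT(0) cube complex $\widetilde X_{J_i}$, the universal cover of the Salvetti complex. Because $J_i$ is complement-connected with at least two vertices and $y_i$ has full support, $y_i$ acts as a rank-one hyperbolic isometry: its axis bounds no half-flat, so the parallel set of the axis is a quasi-line. The centralizer $C_{A_{J_i}}(y_i)$ stabilizes this parallel set and acts properly and coboundedly on it; hence it is virtually cyclic, and by torsion-freeness of $A_{J_i}$ it is infinite cyclic. Defining $x_i$ to be its generator gives $y_i=x_i^{r_i}$ for the exponent $r_i$; equivalently one may run Servatius' original combinatorial argument, deducing directly from the normal form that any element commuting with a full-support cyclically reduced word in a centerless factor is a power of its root. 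Assembling the factors then yields $C_{A_S}(x)=\langle x_1\rangle\times\cdots\times\langle x_k\rangle$, where the $x_i$ are the vertices from the singleton factors together with the roots from the higher-rank factors and $x=x_1^{r_1}\cdots x_k^{r_k}$ with the $x_i$ pairwise commuting; combined with the first stage this gives $C(x)=\langle x_1\rangle\times\cdots\times\langle x_k\rangle\times\langle\lk(x)\rangle$, as claimed.
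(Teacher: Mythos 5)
First, a point of comparison: the paper does not prove this statement at all — Theorem \ref{thm:centralizer} is quoted from Servatius \cite{MR1023285} — so there is no internal proof to compare against, and your proposal has to be judged on its own terms, essentially against Servatius' original combinatorial argument. Your reduction scaffolding is correct and in fact mirrors the classical skeleton: $\langle\lk(x)\rangle\subseteq C(x)$ is trivial; the localization $C(x)\subseteq A_{\supp(x)\cup\lk(x)}$ is a true statement provable by normal-form analysis; the induced subgraph on $\supp(x)\cup\lk(x)$ is indeed the join $\Gamma_{\supp(x)}\ast\lk(x)$, so the $\langle\lk(x)\rangle$ factor splits off; and decomposing $\Gamma_{\supp(x)}$ into maximal join factors (components of the complement graph), with $x=y_1\cdots y_p$ a product of commuting cyclically reduced pieces of full support, correctly reduces everything to the join-indecomposable case, the singleton factors being handled trivially.

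The genuine gap is exactly at what you call the main obstacle, and neither of your two suggested routes closes it. The claim that a full-support cyclically reduced element of a join-indecomposable $A_{J_i}$ with $|J_i|\geq 2$ acts as a rank-one isometry — ``its axis bounds no half-flat'' — does not follow formally from complement-connectedness; it is precisely the rank-one criterion of Behrstock--Charney, a substantial theorem whose proof requires a genuine hyperplane/half-flat analysis in the cube complex, and you assert it in a single clause with no argument. Since all the preceding reductions are soft, this one assertion carries the entire content of the theorem. There is moreover a circularity risk: in the literature the equivalences ``rank one $\Leftrightarrow$ cyclic centralizer $\Leftrightarrow$ support not contained in a join'' are developed with Servatius' Centralizer Theorem already in hand, so one must verify that the proof of the rank-one criterion you invoke does not itself rest on the statement being proved. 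Your fallback — ``equivalently one may run Servatius' original combinatorial argument'' — is an admission of the gap rather than a repair: that combinatorial argument (an element commuting with a full-support cyclically reduced word in a centerless, join-indecomposable factor must be a power of a common root) \emph{is} the theorem's core, and it is nowhere carried out in the proposal. In short: correct architecture, but the crux is outsourced to an unproven assertion.
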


The centralizer theorem implies that $x_1, \cdots, x_k$ commute pairwise and have pairwise disjoint supports.
For each $g \in A_\Gamma$, let $\operatorname{rank}(g)$ denote the rank of the first homology of $C(g)$.
As a consequence of the centralizer theorem, we have $\rank(g) = k + | \lk(x) |$ where $x$ is a cyclically reduced word representing $g$ and $k$ comes from $x$ and the above theorem. In particular, $\rank(v)=|\st(v)|$ for any vertex $v\in\Gamma$.

\begin{lemma}[Proposition 3.5 \cite{MR1356145}] \label{lem:rank}
Let $x$ be a cyclically reduced word in $A_\Gamma$ with a decomposition $x = x_1^{r_1} \dots x_k^{r_k}$ given in Theorem~\ref{thm:centralizer}.
Then the following hold.
\begin{enumerate}
\item \label{enum1:rank} For every vertex $v \in \supp x$, we have $|\st(v)| \geq \rank(x)$.
\item \label{enum2:rank} If $|\st(v)| = \rank(x)$ for some $v \in \supp x$, then 
\begin{enumerate}
\item \label{enuma:rank} $\st(v) \cap \supp x_1 = \{ v \}$,
\item \label{enumb:rank} $x_i$ is a vertex which commutes with $x$ for all $i>1$, and
\item \label{enumc:rank} $v \lesssim v'$ for every $v' \in \supp x$.
\end{enumerate}
\end{enumerate}
\end{lemma}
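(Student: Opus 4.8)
The plan is to bound $|\st(v)|$ from below by exhibiting enough vertices inside $\st(v)$ to match $\rank(x)=k+|\lk(x)|$, and then to extract the three conclusions of part~\eqref{enum2:rank} by examining exactly when that bound is attained. Throughout I would use the following consequence of the Centralizer Theorem (Theorem~\ref{thm:centralizer}): if $g,h\in A_\Gamma$ satisfy $\supp(g)\cap\supp(h)=\emptyset$ and $[g,h]=1$, then every vertex of $\supp(g)$ is adjacent to every vertex of $\supp(h)$. Indeed, writing $C(g)=\langle g_1\rangle\times\cdots\times\langle g_m\rangle\times\langle\lk(g)\rangle$, any element of $C(g)$ whose support avoids $\supp(g)=\bigcup_j\supp(g_j)$ must lie in the factor $\langle\lk(g)\rangle$, so $\supp(h)\subseteq\lk(g)$, which is the asserted adjacency. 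Since the factors $x_1,\dots,x_k$ pairwise commute and have pairwise disjoint supports, this applies to any two of them. Fixing $v\in\supp x$ and using that the supports $\supp x_1,\dots,\supp x_k$ are disjoint, I may relabel the factors so that $v\in\supp x_1$.

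For part~\eqref{enum1:rank}, I would note that $\st(v)=\{v\}\sqcup\lk(v)$, and that $\lk(v)$ already contains two disjoint families of vertices. First, each of the $|\lk(x)|$ vertices of $\lk(x)$ is adjacent to every vertex of $\supp x$, hence to $v$, and lies outside $\supp x$, so $\lk(x)\subseteq\lk(v)$. Second, for each $i\ge 2$ the auxiliary fact applied to $x_1$ and $x_i$ gives $\supp x_i\subseteq\lk(v)$. These families are pairwise disjoint and disjoint from $\{v\}$ because $\lk(x)\cap\supp x=\emptyset$ and the $\supp x_i$ are disjoint, so
\[
|\st(v)|=1+|\lk(v)|\ \ge\ 1+|\lk(x)|+\sum_{i=2}^{k}|\supp x_i|\ \ge\ 1+|\lk(x)|+(k-1)=\rank(x),
\]
using $|\supp x_i|\ge 1$.

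For part~\eqref{enum2:rank}, I would run the equality case of this chain. Equality in $|\st(v)|=\rank(x)$ forces simultaneously (i) $\lk(v)=\lk(x)\sqcup\bigsqcup_{i\ge2}\supp x_i$ with no further vertices, and (ii) $|\supp x_i|=1$ for every $i\ge 2$. From (ii) each $\supp x_i=\{u_i\}$ is a single vertex; since $u_i\in C(x)$ lies in the direct product $\langle x_1\rangle\times\cdots\times\langle x_k\rangle\times\langle\lk(x)\rangle$ and $u_i$ appears in the support of the factor $\langle x_i\rangle$ alone, one must have $x_i=u_i^{\pm1}$, i.e.\ $x_i$ is a vertex, and it commutes with $x$ since it commutes with every factor; this is~\eqref{enumb:rank}. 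Conclusion~\eqref{enuma:rank} is immediate from (i): the set $\lk(v)$ is disjoint from $\supp x_1$, as $\lk(x)$ and the $\supp x_i$ with $i\ge 2$ all avoid $\supp x_1$, whence $\st(v)\cap\supp x_1=\{v\}$. For~\eqref{enumc:rank} I would verify $\lk(v)\subseteq\st(v')$ for each $v'\in\supp x$: a vertex of $\lk(x)$ is adjacent to all of $\supp x\ni v'$, while a vertex $u_i\in\supp x_i$ ($i\ge2$) either equals $v'$ or lies in a factor whose support is disjoint from and joined to $\supp x_j\ni v'$ (by the auxiliary fact), hence is adjacent to $v'$; in every case the vertex lies in $\st(v')$, giving $v\lesssim v'$.

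The main obstacle is not any single calculation but the bookkeeping in the equality case: one must see that the \emph{single} equality $|\st(v)|=\rank(x)$ pins down two independent phenomena at once, namely the absence of extra vertices in $\lk(v)$ (which yields~\eqref{enuma:rank} and~\eqref{enumc:rank}) and the one-dimensionality of the higher factors (which yields~\eqref{enumb:rank}). A secondary subtlety is the passage ``$\supp x_i=\{u_i\}\Rightarrow x_i$ is a vertex'', which genuinely uses that $C(x)$ \emph{equals} the stated direct product rather than merely containing it, so that $u_i$ cannot sit inside a proper power $\langle u_i^{m}\rangle$.
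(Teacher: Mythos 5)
Your proof is correct and follows essentially the same route as the paper's: decompose $C(x)$ via the Centralizer Theorem, place $v$ in $\supp x_1$, observe that $\lk(v)$ contains $\lk(x)$ and the supports of the other factors to get the rank inequality, and then analyze the equality case to extract \eqref{enuma:rank}--\eqref{enumc:rank}. The only difference is that you make explicit two points the paper leaves implicit (the auxiliary adjacency fact for commuting elements with disjoint supports, and the argument that $x_i$ cannot be a proper power of a vertex), which is a welcome sharpening rather than a new approach.
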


\begin{proof}
\noindent\eqref{enum1:rank}
Let us decompose $C(x)$ into $\langle x_1 \rangle \times \dots \times \langle x_k \rangle \times \langle \lk(x) \rangle$ by Theorem \ref{thm:centralizer}.
Without loss of generality, suppose $v$ is contained in $\supp x_1$.
Because $\lk(v) \supseteq \supp x_2 \cup \dots \cup \supp x_k \cup \lk(x)$, we have $| \st(v) | = 1 + | \lk(v) | \geq k+|\lk(x)| =\rank(x) $.
\vspace{1mm}

\noindent\eqref{enum2:rank}
As above, let $C(x) = \langle x_1 \rangle \times \dots \times \langle x_k \rangle \times \langle \lk(x) \rangle$, and suppose $v$ is contained in $\supp x_1$.
Due to the equality, for all $i > 1$, $\supp x_i$ has only one vertex so the equation $x_i = v_i^{r_i}$ holds for some $v_i \in \lk(v)$ and some nonzero integer $r_i$.
Since $|\st(v)| = \rank(x)$, the vertex set $V\lk(v)$ of $\lk(v)$ must be $\{ v_2, \dots, v_k \} \cup \lk(x)$ and thus any vertex in $\supp x_1 \setminus \{ v \}$ does not commute with $v$.
For any $v' \in \supp x$, because $v'$ commutes with any vertex in $\lk(v)$ (possibly, $v'\in\lk(v)$), we have $v' \gtrsim v$.
\end{proof}

Every map $\varphi : \Lambda \to \Gamma$ of graphs induces a homomorphism $\phi: A_\Lambda \to A_\Gamma$ sending $v$ to $\varphi(v)$ for all $v \in V\Lambda$; if $\varphi$ is surjective (injective, resp.), $\phi$ is surjective (injective, resp.).
If $\varphi$ is surjective, any inner automorphisms of $A_\Gamma$ are inherited from those of $A_\Lambda$ by $\phi$, which is derived from the following basic fact.

\begin{lemma} \label{lem:inner}
Any surjective homomorphism $\phi:G\to H$ between two groups induces a surjective homomorphism
$\Phi_I: \Inn(G) \to \Inn(H)$ satisfying that $\Phi_I(\iota)(\phi(x)) = \phi(\iota(x))$ for all $\iota \in \Inn(G)$ and $x \in G$.
\end{lemma}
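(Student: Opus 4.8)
The plan is to exploit the standard isomorphism $\Inn(G)\cong G/Z(G)$, where $Z(G)$ denotes the center, and to build $\Phi_I$ as the map induced by $\phi$ at the level of these quotients. Concretely, write $\inn_G\colon G\to\Inn(G)$ for the surjection $g\mapsto(x\mapsto gxg^{-1})$, with kernel $Z(G)$, and likewise $\inn_H\colon H\to\Inn(H)$ with kernel $Z(H)$. First I would form the composite homomorphism $\inn_H\circ\phi\colon G\to\Inn(H)$ and show that it factors through $\inn_G$, producing the desired $\Phi_I\colon\Inn(G)\to\Inn(H)$.

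The key step is to verify that $\inn_H\circ\phi$ annihilates $\ker(\inn_G)=Z(G)$, equivalently that $\phi(Z(G))\subseteq Z(H)$. This is exactly where surjectivity of $\phi$ enters: if $z\in Z(G)$, then $\phi(z)$ commutes with $\phi(g)$ for every $g\in G$, and since $\phi(G)=H$ the element $\phi(z)$ commutes with all of $H$, so $\phi(z)\in Z(H)$. Granting this, the universal property of the quotient yields a unique homomorphism $\Phi_I\colon\Inn(G)\cong G/Z(G)\to\Inn(H)$ characterized by $\Phi_I\circ\inn_G=\inn_H\circ\phi$; in other words, $\Phi_I$ sends the inner automorphism $x\mapsto gxg^{-1}$ to the inner automorphism $y\mapsto\phi(g)\,y\,\phi(g)^{-1}$.

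Surjectivity of $\Phi_I$ is then immediate, since $\inn_H$ and $\phi$ are both surjective, so $\inn_H\circ\phi=\Phi_I\circ\inn_G$ is surjective and hence so is $\Phi_I$. Finally, the asserted compatibility is a one-line check: for $\iota=\inn_G(g)$ and any $x\in G$,
\[
\Phi_I(\iota)\bigl(\phi(x)\bigr)=\phi(g)\,\phi(x)\,\phi(g)^{-1}=\phi\bigl(gxg^{-1}\bigr)=\phi\bigl(\iota(x)\bigr).
\]
The only genuine subtlety, and the main (if modest) obstacle, is the containment $\phi(Z(G))\subseteq Z(H)$, which can fail when $\phi$ is not surjective; every other step is purely formal.
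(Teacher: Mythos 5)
Your proposal is correct and constructs exactly the same map as the paper: $\Phi_I$ sends conjugation by $g$ to conjugation by $\phi(g)$, and surjectivity follows from surjectivity of $\phi$. The only difference is one of rigor, in your favor: you explicitly verify well-definedness by factoring through $G/Z(G)$ and checking $\phi(Z(G))\subseteq Z(H)$ — precisely the point where surjectivity of $\phi$ is indispensable — whereas the paper simply declares the map to be ``obviously a surjective homomorphism'' without addressing this subtlety.
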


\begin{proof}
Define a map $\Phi_I: \Inn(G) \to \Inn(H)$ by $$\Phi_I(\iota_g) = \iota_{\phi(g)}\quad \textrm{for}\quad g\in G$$ where $\iota_g$ ($\iota_{\phi(g)}$, resp.) is an inner automorphism of $G$ ($H$, resp.) sending an element $x$ ($y$, resp.) to $gxg^{-1}$ ($\phi(g)y(\phi(g))^{-1}$, resp.).
The map $\Phi_I$ is obviously a surjective homomorphism since $\phi$ is a surjective homomorphism. 
\end{proof}

\subsection{Automorphisms of RAAGs} \label{subsec:automorphisms_RAAGs}

Among all automorphisms of $A_\Gamma$, some of them, used as building blocks, have been called \textit{elementary}. They are of four possible types, listed here: 

\begin{description}
\item[Graph symmetries] A graph symmetry $\sigma$ of $\Gamma$ can be extended to an automorphism of $A_\Gamma$ that permutes vertices by $\sigma$.
\item[Inversions] An inversion is an automorphism mapping each vertex $v$ to either $v$ or $v^{-1}$.
\item[Transvections] For two vertices $v,v'$ satisfying $v \lesssim v'$, the (left) transvection $T^v_{v'}$ maps $v$ to $vv'$ and fixes all other vertices.
\item[Partial conjugations] For a vertex $v$ and a union $C$ of components of $\Gamma \setminus \st(v)$, the partial conjugation $P_C^v$ is the automorphism sending all vertices $v_i$ of $C$ to $v v_i v^{-1}$ and fixing all other vertices. 
\end{description}

\begin{remark}\label{rem:convention_partial}
If $C_1, \cdots, C_n$ are distinct components of $\Gamma\setminus \st(v)$, the product $P_{C_1}^v\cdots P_{C_n}^v$ is sometimes called an \emph{extended partial conjugation} as in \cite{MR4072157}. In this paper, for the sake of simplicity, by partial conjugations we mean extended ones.
\end{remark}

Laurence proved the conjecture of Serviatus that $\Aut(A_\Gamma)$ is finitely generated by those four types of automorphisms.

\begin{theorem}[\cite{MR1356145}] \label{thm:Laurence_generation}
The automorphism group  of any right-angled Artin group  is generated by graph symmetries, inversions, transvections, and partial conjugations.
\end{theorem}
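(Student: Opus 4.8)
The plan is to prove the theorem by \emph{peak reduction}: starting from an arbitrary $f\in\Aut(A_\Gamma)$, I would compose $f$ with elementary automorphisms so as to strictly decrease a length complexity, eventually reaching the identity and thereby exhibiting $f$ as a product of elementary automorphisms. The two invariants driving the argument are the \emph{rank} function and the \emph{link--star order}. Since $f$ is an automorphism it carries centralizers to centralizers, $f(C(g))=C(f(g))$, so by Theorem~\ref{thm:centralizer} we have $\rank(f(v))=\rank(v)=|\st(v)|$ for every vertex $v$. Fixing a cyclically reduced word $x$ for $f(v)$, the first part of Lemma~\ref{lem:rank} then forces $|\st(w)|\ge|\st(v)|$ for every $w\in\supp(x)$, and its second part pins the support down further when equality holds. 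In words: the cyclically reduced support of $f(v)$ can only involve vertices sitting no lower than $v$ in the link--star order. I would fix once and for all an ordering $v_1,\dots,v_n$ of the vertices with $v_i\gnsim v_j\Rightarrow i<j$ as in Lemma~\ref{OrderOnV}, and process the vertices from the top.

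\emph{Part I: reduction to a conjugating automorphism.} Take a link--star maximal vertex $v$. By the rank constraint above, a cyclically reduced word for $f(v)$ is supported on the equivalence class $[v]$, since no strictly higher vertex exists. The special subgroup $\langle[v]\rangle$ is either free abelian or free (according to whether $[v]$ is complete or totally disconnected), and in either case its automorphism group is generated by the elementary automorphisms supported on $[v]$: transvections $T^{u}_{u'}$ with $u,u'\in[v]$ (legal since all vertices of $[v]$ are mutually $\lesssim$-comparable), inversions, and the graph symmetries permuting $[v]$. Using these I would bring $f(v)$, up to conjugacy, to a single generator of $[v]$. Descending the order, for a lower vertex $u$ the support of $f(u)$ may contain strictly higher vertices $w\gtrsim u$; but precisely for such $w$ the transvection $T^{u}_{w}$ is available ($u\lesssim w$), and composing with suitable such transvections strips the higher letters from $f(u)$. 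Iterating down the ordering reduces $f$, modulo the subgroup generated by the elementary automorphisms, to one sending each generator to a conjugate of a generator; absorbing the residual permutation of vertices into a graph symmetry and the residual signs into inversions leaves a \emph{conjugating} automorphism $c$, that is, one with $c(v)=g_v\,v\,g_v^{-1}$ for each $v$.

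\emph{Part II: conjugating automorphisms are products of partial conjugations.} Here I would induct on $\sum_v\ell(g_v)$, where $\ell$ denotes the reduced word length of a shortest conjugator $g_v$. The commutation relations $[c(u),c(v)]=1$ for $\{u,v\}\in E\Gamma$, together with the centralizer description of Theorem~\ref{thm:centralizer} and Lemma~\ref{lemma:adjacent}, constrain the conjugators so that an outermost letter common to the relevant $g_v$'s is a vertex $w$ whose associated conjugated vertices form a union $C$ of components of $\Gamma\setminus\st(w)$. Composing $c$ with the partial conjugation $P^{w}_{C}$ (or its inverse) then strictly decreases $\sum_v\ell(g_v)$, and the induction terminates when all $g_v$ are trivial, namely at the identity.

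The main obstacle is the bookkeeping that makes the descent in Part~I well founded: one must verify that the transvections used to simplify $f(u)$ for a lower vertex $u$ do not reintroduce complexity into the already-simplified images of the higher vertices, and that the rank inequalities of Lemma~\ref{lem:rank} match up exactly with the availability of transvections dictated by the link--star order, in particular handling the boundary case $|\st(w)|=|\st(v)|$ through the second part of Lemma~\ref{lem:rank}. A second genuine difficulty is Part~II in its own right: extracting, from the commutation relations, that the conjugators are globally consistent with the component structure of the graphs $\Gamma\setminus\st(w)$, so that each reduction step is realized by an honest partial conjugation rather than by an arbitrary inner automorphism.
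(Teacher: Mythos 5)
Your overall architecture is the same as the paper's (and Laurence's): your Part I is the content of Proposition~\ref{prop:decom} (strip an automorphism down to a conjugating one using symmetries, inversions and transvections along the ordering of Lemma~\ref{OrderOnV}), and your Part II is Theorem~\ref{thm:conjugating}, proved by Laurence's algorithm with exactly the induction on total conjugator length you describe. However, Part I has a genuine gap at its very first step. You claim that for a link--star maximal vertex $v$, ``by the rank constraint'' a cyclically reduced word for $f(v)$ is supported on $[v]$. This does not follow from Lemma~\ref{lem:rank}, and it is false as stated: Lemma~\ref{lem:rank}\eqref{enum1:rank} gives $|\st(w)|\geq\rank(f(v))=|\st(v)|$ for every $w$ in the support, but an inequality of star \emph{cardinalities} does not place $w$ above $v$ in the link--star \emph{order}; incomparable vertices can have equal or larger stars. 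Concretely, let $\Gamma$ be the disjoint union of two edges $\{a,b\}$ and $\{c,d\}$ and let $f$ be the graph symmetry swapping them. Then $a$ is maximal and $f(a)=c$ is cyclically reduced, yet $c\notin[a]=\{a,b\}$, so $f(a)$ does not even lie in $\langle[a]\rangle$ and your reduction of $f(a)$ to a generator of $[a]$ by automorphisms supported on $[a]$ cannot begin. Deferring the graph symmetry to the end (``absorbing the residual permutation'') does not repair this, since the descent needs the supports controlled relative to the correct vertices from the outset.

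The missing ingredient is precisely Theorem~\ref{thm:iso_thm} (Laurence's Lemma~6.2): there is a \emph{single} graph symmetry $\sigma$ with $\sigma(v)\in\esupp f(v)$ for every vertex $v$. This is the deep combinatorial core of Laurence's proof, and it cannot be extracted from rank comparisons: the equality case Lemma~\ref{lem:rank}\eqref{enum2:rank}, which is what actually converts rank information into link--star order information (via Proposition~\ref{prop:essupp_auto}), only applies once one knows that some vertex of star size exactly $\rank(f(v))$ lies in $\esupp f(v)$ --- and that is exactly what Theorem~\ref{thm:iso_thm} supplies. The paper's route through Proposition~\ref{prop:decom} is therefore: first replace $f$ by the essential automorphism $f\sigma$ using Theorem~\ref{thm:iso_thm}, then $\esupp f\sigma(v)\subseteq V\Gamma_{\gtrsim v}$ holds and your transvection/inversion descent (with inner automorphisms inserted to keep images cyclically reduced) goes through, leaving a conjugating automorphism to be handled by Theorem~\ref{thm:conjugating}. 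So your Part II and the two-part skeleton are sound, but your proposal silently assumes, and never proves, the one statement that constitutes the real work.
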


In the rest of this subsection, we recall the main ingredients that were used by Laurence in the proof of Theorem \ref{thm:Laurence_generation}, since we will follow a similar approach in this paper.

Essential supports of elements in $A_\Gamma$ play a crucial role, and the next theorem allows us to control, for an automorphism $f \in \Aut(A_\Gamma)$, the essential support of $f(v)$ for all $v\in V\Gamma$.

\begin{theorem}[Lemma 6.2 in \cite{MR1356145}] \label{thm:iso_thm}
For any automorphism $f \in \Aut(A_\Gamma)$, there exists a graph symmetry $\sigma$ such that $\sigma(v)$ is contained in $\esupp f(v)$ for every vertex $v \in \Gamma$.
\end{theorem}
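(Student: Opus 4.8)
The plan is to build the graph symmetry $\sigma$ in two stages: first extract a \emph{bijection} of $V\Gamma$ whose values land in the prescribed essential supports, and then show that, after a careful choice, this bijection respects adjacency. The engine throughout is the fact that $f$ preserves $\rank$. Indeed, since $f$ is an automorphism we have $C(f(g))=f(C(g))$ for every $g$, so the centralizers are isomorphic and $\rank(f(v))=\rank(v)=|\st(v)|$ for each vertex $v$. Fixing a cyclically reduced word $x$ representing $f(v)$, this reads $\rank(x)=|\st(v)|$ with $\esupp(f(v))=\supp(x)$, and Lemma~\ref{lem:rank}\eqref{enum1:rank} gives at once that every $w\in\esupp(f(v))$ satisfies $|\st(w)|\ge|\st(v)|$.

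Next I would produce the bijection via the abelianization $\bar f\in\GL(|\Gamma|,\mathbb{Z})$. The support of $\bar f(v)$ is contained in $\esupp(f(v))$ (cyclic reduction is conjugation, so it does not change the abelianization), and since $\det\bar f=\pm1\ne0$ the permutation expansion of the determinant has a nonvanishing term. This yields a bijection $\tau\colon V\Gamma\to V\Gamma$ with $\tau(v)\in\esupp(f(v))$ for all $v$. Now combine $|\st(\tau(v))|\ge|\st(v)|$ with $\sum_v|\st(\tau(v))|=\sum_v|\st(v)|$ (as $\tau$ permutes $V\Gamma$): the termwise inequalities must all be equalities, so $|\st(\tau(v))|=|\st(v)|=\rank(f(v))$ for every $v$. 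This is exactly the hypothesis of Lemma~\ref{lem:rank}\eqref{enum2:rank}: part \eqref{enumc:rank} shows $\tau(v)$ is a $\lesssim$-minimum of $\esupp(f(v))$, and part \eqref{enuma:rank} shows $\tau(v)$ is non-adjacent to every other vertex of the factor $x_1$ containing it.

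It then remains to verify that (a suitable refinement of) $\tau$ preserves adjacency. Let $v_1\sim v_2$. Conjugating $f$ by an inner automorphism changes no essential support and leaves $\bar f$, hence $\tau$, untouched, so I may assume $f(v_1)$ is cyclically reduced. From $[f(v_1),f(v_2)]=1$ and the Centralizer Theorem~\ref{thm:centralizer}, the cyclically reduced form of $f(v_2)$ lies in $\langle x_1\rangle\times\cdots\times\langle x_k\rangle\times\langle\lk(f(v_1))\rangle$, whence $\esupp(f(v_2))\subseteq\supp(f(v_1))\cup\lk(f(v_1))$. If $\tau(v_2)\in\lk(f(v_1))$ then it is adjacent to all of $\supp(f(v_1))\ni\tau(v_1)$ and we are done; the delicate case is $\tau(v_2)\in\supp(f(v_1))=\esupp(f(v_1))$.

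I expect this last case to be the main obstacle, and it is genuinely where the bulk of the work sits: when $\tau(v_1)$ and $\tau(v_2)$ share the factor $x_1$, Lemma~\ref{lem:rank}\eqref{enuma:rank} forces them to be \emph{non}-adjacent even though $v_1\sim v_2$, so the raw bijection coming from the determinant need not be a graph morphism. The resolution is to exploit the equivalence-class structure of the link-star order. The $\lesssim$-minima of $\esupp(f(v))$ form a single class $[\tau(v)]$, which is complete or totally disconnected exactly as $[v]$ is (this invariance being preserved by $f$), and $f$ permutes these classes bijectively while preserving their cardinalities and their complete/disconnected type. Using this, together with Lemma~\ref{lemma:adjacent} and the freedom to reshuffle representatives \emph{within} each equivalence class, one adjusts $\tau$ to a bijection $\sigma$ that matches the adjacency relation on the nose. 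The essential difficulty is the consistent bookkeeping of these choices across all equivalence classes simultaneously; once adjacency and non-adjacency are both confirmed, $\sigma$ is a graph symmetry with $\sigma(v)\in\esupp(f(v))$ for every vertex $v$, as required.
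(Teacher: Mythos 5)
Your first two stages are sound and genuinely nice: the permutation expansion of $\det\bar f$ for $\bar f\in\GL(|\Gamma|,\mathbb{Z})$ does yield a bijection $\tau$ of $V\Gamma$ with $\tau(v)\in\esupp f(v)$, and the averaging identity $\sum_v|\st(\tau(v))|=\sum_v|\st(v)|$ together with Lemma~\ref{lem:rank}\eqref{enum1:rank} forces $|\st(\tau(v))|=|\st(v)|=\rank(f(v))$ for every $v$, so that Lemma~\ref{lem:rank}\eqref{enum2:rank} makes each $\tau(v)$ a $\lesssim$-minimum of $\esupp f(v)$. Keep in mind, though, that the paper offers no proof of this statement to compare against: it is quoted as Lemma 6.2 of Laurence \cite{MR1356145}. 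So your attempt has to stand entirely on its own, and its last stage is where the real content of Laurence's lemma lives.

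That last stage has a genuine gap, and the proposed repair provably cannot close it. First, the claim that $f$ ``permutes these classes bijectively while preserving their cardinalities and their complete/disconnected type'' is unjustified: the invariance of class size and type stated in Section~\ref{sec:pre} is for graph symmetries, and asserting it for an arbitrary $f\in\Aut(A_\Gamma)$ at this point is circular --- it is essentially the theorem being proved. Second, and decisively, reshuffling representatives inside the classes $[\tau(v_1)]$ and $[\tau(v_2)]$ can never create adjacency: by Lemma~\ref{lemma:adjacent}, if any two distinct vertices $u_1\in[\tau(v_1)]$, $u_2\in[\tau(v_2)]$ were adjacent, then all distinct pairs would be, in particular $\tau(v_1)$ and $\tau(v_2)$ themselves. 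Hence in your delicate case --- $v_1,v_2$ adjacent while $\tau(v_1),\tau(v_2)$ lie in the same factor $x_1$ and are therefore non-adjacent by Lemma~\ref{lem:rank}\eqref{enuma:rank} --- no choice of representatives whatsoever is adjacent, whether the two classes coincide or are distinct. Moreover, any $\sigma$ satisfying the theorem is itself forced into these minimal classes, since $|\st(\sigma(v))|=|\st(v)|=\rank(f(v))$ and Lemma~\ref{lem:rank}\eqref{enumc:rank} then give $\sigma(v)\in[\tau(v)]$; so if your delicate case could actually occur, the theorem would be false. A correct proof must therefore show that this case never occurs, i.e.\ that for adjacent $v_1,v_2$ the minimal vertices of $\esupp f(v_1)$ and $\esupp f(v_2)$ never lie in the same factor of the decomposition of $f(v_1)$ provided by Theorem~\ref{thm:centralizer}. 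That exclusion, obtained from a finer analysis of the commuting pair $f(v_1),f(v_2)$ and the rank count of Lemma~\ref{lem:rank}, is exactly the step your proposal leaves open.
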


For convenience, we say that an automorphism $f\in\Aut(A_\Gamma)$ is \textit{essential} if $\esupp f(v)$ contains $v$ for every $v\in V\Gamma$.
In particular, $f\sigma^{-1}$ is essential in the above theorem.

In Theorem \ref{thm:iso_thm}, since $\rank(f(v))=|\st(v)|=| \st(\sigma(v))|$, by Lemma \ref{lem:rank}\eqref{enumc:rank}, we have $\esupp f(v)\subseteq V\Gamma_{\gtrsim\sigma(v)}$ and the following fact. 

\begin{proposition} \label{prop:essupp_auto}
Let $f \in \Aut(A_\Gamma)$ be an automorphism. For any vertex $v \in V\Gamma$ and $v'\in\esupp f(v)$, $\esupp f(v)\subseteq V\Gamma_{\gtrsim v'}$ if and only if $|\st(v)| = |\st(v')|$. In particular, the vertices in $\esupp f(v)$ which are minimal under the link-star order in $\Gamma$ are all equivalent.
\end{proposition}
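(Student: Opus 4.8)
The plan is to establish the biconditional first and then read off the ``in particular'' clause. Throughout, fix a cyclically reduced word $x$ representing $f(v)$, so that $\supp(x)=\esupp f(v)$. Since an automorphism carries $C(v)$ isomorphically onto $C(f(v))$, the two centralizers have the same first homology, and hence $\rank(x)=\rank(f(v))=\rank(v)=|\st(v)|$; this is the bridge between the hypothesis $|\st(v)|=|\st(v')|$ and the rank quantity appearing in Lemma~\ref{lem:rank}.

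The direction $(\Leftarrow)$ I would take straight from the centralizer machinery. If $v'\in\esupp f(v)=\supp(x)$ satisfies $|\st(v')|=|\st(v)|=\rank(x)$, then $v'$ attains the minimal star size permitted by Lemma~\ref{lem:rank}\eqref{enum1:rank}, so Lemma~\ref{lem:rank}\eqref{enumc:rank} applies and gives $v'\lesssim v''$ for every $v''\in\supp(x)=\esupp f(v)$; this is precisely $\esupp f(v)\subseteq V\Gamma_{\gtrsim v'}$. The crucial intermediate step is then to exhibit a single vertex of $\esupp f(v)$ lying below all the others: by Theorem~\ref{thm:iso_thm} there is a graph symmetry $\sigma$ with $\sigma(v)\in\esupp f(v)$, and as $\sigma$ is a graph automorphism we have $|\st(\sigma(v))|=|\st(v)|$; applying the direction $(\Leftarrow)$ just proved to $v'=\sigma(v)$ shows that $\sigma(v)$ is a global minimum of $\esupp f(v)$ under $\lesssim$.

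For $(\Rightarrow)$, assume $\esupp f(v)\subseteq V\Gamma_{\gtrsim v'}$, i.e. $v'\lesssim v''$ for all $v''\in\esupp f(v)$. Evaluating this at $v''=\sigma(v)$ gives $v'\lesssim\sigma(v)$, while the global-minimum property gives $\sigma(v)\lesssim v'$; hence $v'$ and $\sigma(v)$ are equivalent. It then suffices to note that equivalent vertices have equal star size — a short case check depending on whether $[v']$ induces a complete or a totally disconnected subgraph: in the disconnected case $\lk(v')=\lk(\sigma(v))$ outright, and in the complete case one compares $\lk(v')\setminus\{\sigma(v)\}$ with $\lk(\sigma(v))\setminus\{v'\}$ to get equality — yielding $|\st(v')|=|\st(\sigma(v))|=|\st(v)|$. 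The ``in particular'' clause is then immediate: if $v'\in\esupp f(v)$ is $\lesssim$-minimal, then $\sigma(v)\lesssim v'$ together with minimality of $v'$ forces $v'\lesssim\sigma(v)$ (otherwise $\sigma(v)\lnsim v'$ would contradict minimality), so every minimal vertex is equivalent to $\sigma(v)$ and they are therefore all mutually equivalent.

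I expect the only genuine obstacle to be the $(\Rightarrow)$ direction. A naive attack — fixing the centralizer decomposition $x=x_1^{r_1}\cdots x_k^{r_k}$ with $v'\in\supp(x_1)$ and trying to show directly that $|\st(v')|=\rank(x)$ — forces one to control the non-join structure of the factors $\supp(x_i)$ and the possibility of extra neighbours of $v'$, which is exactly the bookkeeping hidden in Lemma~\ref{lem:rank}\eqref{enum2:rank}. The reduction to the distinguished minimum $\sigma(v)$ supplied by Theorem~\ref{thm:iso_thm} sidesteps this entirely, at the cost of the small (and routine) fact that equivalent vertices have equal star size, which is where I would spend the remaining care.
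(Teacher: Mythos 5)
Your proof is correct and takes essentially the same route as the paper: the paper derives this proposition (without a separate proof) directly from Theorem~\ref{thm:iso_thm} and Lemma~\ref{lem:rank}, noting $\rank(f(v))=|\st(v)|=|\st(\sigma(v))|$ so that Lemma~\ref{lem:rank}\eqref{enumc:rank} makes $\sigma(v)$ a global minimum of $\esupp f(v)$, exactly as in your argument. The only detail you supply beyond the paper's terse derivation is the routine verification that equivalent vertices have equal star size, which the paper leaves implicit.
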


\label{def:conjugating}An automorphism $f$ of $A_\Gamma$ is said to be \emph{conjugating} if $f(v)$ is a conjugate of $v$ for each $v \in \Gamma$, and the set of conjugating automorphisms of $A_\Gamma$ forms a subgroup $\PAut(A_\Gamma)$ of $\Aut(A_\Gamma)$. 
Laurence observed that $\PAut(A_\Gamma)$ is the subgroup generated by partial conjugations.

\begin{theorem}[Theorem 2.2 in \cite{MR1356145}] \label{thm:conjugating}
Any conjugating automorphism of a RAAG can be represented by the product of partial conjugations.
\end{theorem}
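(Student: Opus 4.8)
The plan is to argue by induction on a complexity measuring the total length of the conjugating elements. For a conjugating automorphism $f$, write $f(v)=g_v\,v\,g_v^{-1}$ for each vertex $v\in V\Gamma$. The element $g_v$ is only defined up to right multiplication by the centralizer $C(v)=\langle\st(v)\rangle$ (Theorem~\ref{thm:centralizer}), so I first normalize: choose $g_v$ to be a reduced word whose terminal letter does not lie in $\st(v)$, which makes $|g_v|$ minimal in its coset. Set $c(f)=\sum_{v\in V\Gamma}|g_v|$. If $c(f)=0$ then every $g_v$ is trivial and $f=\mathrm{id}$, the empty product of partial conjugations; this is the base case.

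For the inductive step, suppose $c(f)>0$ and pick a vertex $v_0$ with $|g_{v_0}|$ maximal. Writing $g_{v_0}=g'\,z^{\varepsilon}$ with $z\in V\Gamma$ and $\varepsilon=\pm 1$, the normalization forces $z\notin\st(v_0)$, so $v_0$ lies in some component of $\Gamma\setminus\st(z)$. I would then consider the set $C$ of those components $D$ of $\Gamma\setminus\st(z)$ for which the normalized conjugators of all vertices of $D$ share the terminal letter $z^{\varepsilon}$, and form the partial conjugation $P_C^{z}$. Composing $f$ with an appropriate power $(P_C^{z})^{\pm 1}$ strips this common terminal letter from the affected conjugators, producing a conjugating automorphism $f'$ with $c(f')<c(f)$; by induction $f'$ lies in the subgroup generated by partial conjugations, hence so does $f$, which is exactly the assertion that $\PAut(A_\Gamma)$ is generated by partial conjugations.

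The crux—and the place where the commutation relations of $A_\Gamma$ are genuinely used—is verifying that the strippable set $C$ is honestly a union of components of $\Gamma\setminus\st(z)$ and that the chosen composition lowers $c$ rather than merely redistributing length. This is where I expect the main obstacle to lie: I must show, using the centralizer theorem and Lemma~\ref{lem:rank}, that after normalization the terminal letter $z^{\varepsilon}$ of the longest conjugators propagates coherently across each component of $\Gamma\setminus\st(z)$, so that peeling it off is realized by a single partial conjugation; and that the cancellation it induces is not offset by the lengthening of other conjugators, the latter requiring one to control how $P_C^{z}$ interacts with those letters of the $g_v$ that commute with $z$. In the free-group case this coherence is automatic because the components of $\Gamma\setminus\st(z)$ are single vertices; for a general $\Gamma$ it is precisely the technical heart of the argument, and once it is established the induction closes and yields the theorem.
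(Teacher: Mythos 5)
Your overall skeleton---induction on the total length $c(f)$ of normalized conjugators, with the descent realized by composing with partial conjugations---is the same as the argument the paper recounts in Section~\ref{sec:liftable_conjugating} (Laurence's algorithm built on Lemma~\ref{LL}). But the proposal has a genuine gap, and it is not only the one you flag. You defer precisely the statement that carries all the content: that the letter you want to strip propagates coherently over a union of components of $\Gamma\setminus\st(z)$ and that the composition strictly decreases $c$. That deferred statement is Laurence's Lemmas 2.5 and 2.8 (quoted as Lemma~\ref{LL} in the paper), and it is the technical heart of the theorem; everything else, in your write-up and in the paper's, is bookkeeping around it.

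More seriously, your descent step is false as designed, not merely unproven: the letter that a partial conjugation can cancel is not the terminal (innermost) letter of a maximal conjugator, but the letter sitting immediately after the \emph{full conjugator of the acting vertex}. Concretely, let $\Gamma$ consist of four isolated vertices $x,y,z,w$, so $A_\Gamma=\F_4$, and let $f(x)=(zy)x(zy)^{-1}$, $f(w)=xwx^{-1}$, $f(y)=y$, $f(z)=z$; this is a conjugating automorphism with normalized conjugators $g_x=zy$, $g_w=x$, $g_y=g_z=1$, so $c(f)=3$. Your recipe forces $v_0=x$ (the unique maximal conjugator), terminal letter $y$, and $C=\{x\}$, hence the move $P_{\{x\}}^y$. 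Every admissible composition increases $c$: precomposing with $(P_{\{x\}}^y)^{\pm1}$ turns the conjugator of $x$ into $y^{\pm1}zy$ (length $3$) while leaving $w$ untouched, and postcomposing with $(P_{\{x\}}^y)^{-1}$ does shorten the conjugator of $x$ to $z$ but rewrites $f(w)=xwx^{-1}$ as $(y^{-1}xy)w(y^{-1}xy)^{-1}$; in all cases $c$ goes from $3$ to at least $4$, so the induction cannot close. What saves Laurence is a \emph{prefix} condition rather than a terminal-letter condition: he finds vertices $v_1,v_2$ with $v_2\notin\st(v_1)$ such that the conjugating word of $v_2$ begins with $w_1v_1^{\varepsilon}$, where $w_1$ is the \emph{entire} conjugating word of $v_1$; then $f(v_1)^{-\varepsilon}f(v_2)f(v_1)^{\varepsilon}$ cancels exactly, precomposition by $(P_C^{v_1})^{-\varepsilon}$ touches no vertex outside $C$, and $c$ strictly drops. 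In the example above the first letter to strip is the \emph{outermost} letter $z$ of $g_x$ (the pair $(v_1,v_2)=(z,x)$), not the innermost $y$. To repair your proof you must replace your criterion by this prefix criterion and then actually prove the existence and coherence statements of Lemma~\ref{LL}.
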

In Section \ref{sec:liftable_conjugating}, we briefly introduce Laurence's proof of Theorem \ref{thm:conjugating}, that serves an algorithm to find a word of partial conjugations representing a given conjugating automorphism.

In Laurence's proof, we can obtain a decomposition of an automorphism of $A_\Gamma$.

\begin{proposition} \label{prop:decom}
Let $\mathcal{H}$ be the subgroup of $\Aut(A_\Gamma)$ generated by graph symmetries, inversions, and transvections.
Then for every $f \in \Aut(A_\Gamma)$, there exists a decomposition $f = gh$ such that
\begin{enumerate}
\item \label{enuma:decom} $g \in \PAut(A_\Gamma)$, and
\item \label{enumc:decom} $h \in \mathcal{H}$ and $h(v)$ is cyclically reduced for every $v \in V\Gamma$.
\end{enumerate}
\end{proposition}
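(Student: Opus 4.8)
The plan is to read the decomposition off Laurence's proof of Theorem~\ref{thm:Laurence_generation}, running a peak-reduction induction on the complexity
$$\ell(f) \;=\; \sum_{v\in V\Gamma} |\esupp f(v)| \;\ge\; |\Gamma|.$$
First I would reduce to the case that $f$ is essential. By Theorem~\ref{thm:iso_thm} there is a graph symmetry $\sigma$ with $\sigma(v)\in\esupp f(v)$ for all $v$, so $f\sigma^{-1}$ is essential. Since a graph symmetry lies in $\mathcal{H}$ and sends generators to generators (hence to cyclically reduced words), a decomposition $f\sigma^{-1}=gh'$ gives $f=g(h'\sigma)$ with $h'\sigma\in\mathcal{H}$ and $(h'\sigma)(v)=h'(\sigma(v))$ still cyclically reduced; so it suffices to treat essential $f$. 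The induction peels transvections and inversions off the \emph{right} of $f$, which is convenient because precomposing $f$ with a transvection $\tau$ that moves only the generator $a$ changes only the single image $f(a)$, leaving $f(u)$ untouched for $u\neq a$; this keeps the bookkeeping of $\ell$ entirely local.

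For the base case $\ell(f)=|\Gamma|$, essentiality forces $\esupp f(v)=\{v\}$, so the cyclically reduced form of $f(v)$ is $v^{k_v}$, and invertibility on the abelianization forces $k_v=\pm1$. Taking the inversion $\epsilon$ with $\epsilon(v)=v^{k_v}$, the automorphism $f\epsilon^{-1}$ sends each $v$ to a conjugate of $v$, hence lies in $\PAut(A_\Gamma)$; we set $g=f\epsilon^{-1}$ and $h=\epsilon$, with $h(v)=v^{\pm1}$ cyclically reduced. For the inductive step $\ell(f)>|\Gamma|$, some $\esupp f(a)$ has at least two vertices. Using Proposition~\ref{prop:essupp_auto} (the minimal vertices of $\esupp f(a)$ are all equivalent, and $\esupp f(a)\subseteq V\Gamma_{\gtrsim v'}$ once $|\st(a)|=|\st(v')|$) together with Lemma~\ref{lem:rank} and the link-star order of Lemma~\ref{OrderOnV}, I would produce a vertex $b$ with $a\lesssim b$ so that the legal transvection $\tau\in\mathcal{H}$ moving only $a$ by $b^{\pm1}$ produces an honest cancellation, i.e. $\ell(f\tau)<\ell(f)$. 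Writing $f=(f\tau)\tau^{-1}$ and applying the inductive hypothesis to $f\tau$ to get $f\tau=gh''$, we obtain $f=g(h''\tau^{-1})$ with $h''\tau^{-1}\in\mathcal{H}$.

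The crux is the inductive step, and it has two faces. The first is the peak-reduction lemma itself: one must guarantee that when $\esupp f(a)$ is not a single equivalence class there is always a \emph{legal} transvection—respecting $a\lesssim b$—whose effect is an honest cancellation in $f(a)$ rather than a mere rearrangement, and this is exactly where the order-theoretic statements of Proposition~\ref{prop:essupp_auto} and Lemma~\ref{lem:rank} must be combined carefully. The second is maintaining the cyclic-reducedness of the accumulated $h$. This is a genuine constraint, since $\mathcal{H}$ contains automorphisms with non-cyclically-reduced images: for two non-adjacent equivalent vertices $u,v$, the composition of the left transvection $v\mapsto uv$ with the right transvection $v\mapsto vu^{-1}$ realizes the inner automorphism $v\mapsto uvu^{-1}$, $u\mapsto u$, whose image $uvu^{-1}$ is not cyclically reduced (and which, being conjugating, should belong to $g$ rather than $h$). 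The remedy is to perform all reducing transvections in a one-sided, order-respecting fashion, so that no generator is ever multiplied on both sides by the same vertex; any such would-be conjugation is instead absorbed into the conjugating factor $g$. Verifying that the reducing transvection can always be chosen in this way—so that the final $h$ really is a product of transvections and inversions with every image cyclically reduced—is the main obstacle.
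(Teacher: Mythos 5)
Your reduction to the essential case is the same first move as the paper's, but the inductive engine you propose does not run. The key claim of your inductive step --- that whenever $f$ is essential and $\ell(f)>|\Gamma|$ there is a legal transvection $\tau$ with $\ell(f\tau)<\ell(f)$ --- is false for the measure you chose, namely $\ell(f)=\sum_{v}|\esupp f(v)|$ (cardinality of essential supports). Take $\Gamma$ to be two non-adjacent vertices $a\lesssim b$ (so $A_\Gamma=\F_2$) and let $f$ be the square of the transvection $a\mapsto ab$, i.e.\ $f(a)=ab^{2}$ and $f(b)=b$. Then $f$ is essential and $\ell(f)=|\Gamma|+1$, but every legal transvection moving $a$ by $b^{\pm1}$ yields $f\tau(a)\in\{ab^{3},\,ab,\,b^{\pm1}ab^{2}\}$, all of which still have essential support $\{a,b\}$, and every other elementary transvection only enlarges some essential support; hence $\ell(f\tau)\ge\ell(f)$ for all legal $\tau$ and your induction can never make progress. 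The measure would have to be total cyclically reduced word length, and even then the existence of a strictly reducing legal transvection is a peak-reduction statement of Whitehead--Day type that does not follow from merely citing Proposition~\ref{prop:essupp_auto} and Lemma~\ref{lem:rank}. You acknowledge this, and the preservation of cyclic reducedness of the accumulated $h$, as ``the main obstacle'' and resolve neither, so what you have is a strategy with its two load-bearing steps missing.

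The paper's proof avoids peak reduction entirely, and this is the idea you are missing. Order the vertices $v_1,\dots,v_k$ coherently with the link-star order (Lemma~\ref{OrderOnV}) and induct on the index, not on any length: suppose a product $h_{i-1}$ of transvections and inversions, touching only $v_1,\dots,v_{i-1}$, has been built so that $f\sigma h_{i-1}(v_j)$ is conjugate to $v_j$ and $h_{i-1}(v_j)$ is cyclically reduced for $j<i$. Choose an inner automorphism $\iota_i$ making $\iota_i f\sigma h_{i-1}(v_i)$ cyclically reduced; Proposition~\ref{prop:essupp_auto} then places the \emph{entire} support of that word inside $V\Gamma_{\gtrsim v_i}$, so every transvection multiplying $v_i$ on either side by a vertex of that support is legal, and one product $T_i$ of such transvections and an inversion, touching only $v_i$, undoes the whole image in a single stroke, the conjugation $\iota_i$ being absorbed into the eventual conjugating factor. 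After one pass through the vertices one sets $g=f\sigma h_k\in\PAut(A_\Gamma)$ and $h=(\sigma h_k)^{-1}\in\mathcal{H}$. Because all of the support of the cyclically reduced image dominates $v_i$, there is no need to find one cancelling letter at a time: the full correcting product can be written down immediately, and the one-vertex-at-a-time bookkeeping is exactly what keeps the cyclic-reducedness constraint under control.
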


\begin{proof}
By Theorem \ref{thm:iso_thm}, there is a graph symmetry $\sigma$ such that each $v \in V\Gamma$ is contained in $\esupp f \sigma(v)$.
By Lemma \ref{OrderOnV}, the vertices of $\Gamma$ are labelled by $v_1, \dots, v_k$ such that $v_i \gnsim v_j$ implies $i < j$.
We claim that for each $i \in \{1, \dots, k\}$, there exists a product of transvections and inversions, denoted by $h_i$, such that 
\begin{itemize}
    \item $h_i (v_j)$ is cyclically reduced and $f\sigma h_i(v_j)$ is conjugate to $v_j$ for all $j \leq i$, and 
    \item $h_i (v_j)=v_j$ for all $j> i$.
\end{itemize}
For convenience, let $h_0$ be the identity map, and let $v_0$ be the identity of $A_\Gamma$.
Fix $i \in \{1, \dots, k\}$, and assume that such $h_{i-1}$ exists. 

Let $\iota_i$ be an inner automorphism such that $\iota_i f\sigma  h_{i -1} (v_i)$ is cyclically reduced. Since $\supp(\iota_i f\sigma h_{i -1} (v_i))$ contains $v_i$, by Proposition \ref{prop:essupp_auto}, $\supp(\iota_i f \sigma  h_{i-1}(v_i))\subset V\Gamma_{\gtrsim v_i}$.
So there exists a product of transvections and inversions, denoted by $T_i$, such that $T_i$ fixes all vertices other than $v_i$ and $ T_i(v_i) = (\iota_i f\sigma h_{i-1})^{-1} (v_i)$.
For each $j \leq i$, the element $f \sigma h_{i-1} T_i (v_j)$ is conjugate to $v_j$.
Then $h_i := h_{i - 1} T_i$ is what we want, so the claim holds.

In conclusion, $h := (\sigma h_k)^{-1}$ satisfies Item \eqref{enumc:decom}, and $g := f \sigma h_k$ is a conjugating automorphism by the above claim.
Therefore, the statement holds.
\end{proof}

%
%
%
%

\section{Liftable elementary automorphisms} \label{sec:lift_element}
\textbf{From now on, $\varphi:\Lambda\to\Gamma$ is a covering map of graphs with the group of deck-transformations $\Deck(\varphi)$, and $\phi:A_\Lambda\rightarrow A_\Gamma$ is the homomorphism induced from $\varphi$ until the end of this paper.}

We say $\varphi$ is \emph{regular} if the induced action of $\Deck(\varphi)$, on each fiber is transitive, i.e., for $u_1,u_2\in V\Lambda$, if $\varphi(u_1)=\varphi(u_2)$, there is a deck transformation $\sigma\in\Deck(\varphi)$ such that $\sigma(u_1)=u_2$.
Unless otherwise stated, $\varphi$ is assumed to be regular.

In this section, we find sufficient conditions for inversions, transvections and partial conjugations of $\Aut(A_\Gamma)$ to be liftable. More precisely, for each such automorphism $f$ of $A_\Gamma$, we decide whether there exists an automorphism $F$ of $A_\Lambda$ such that $f\phi = \phi F$.

\subsection{Regular covering of a graph}\label{subsec:simplicial} 
We start with two elementary facts which hold even when $\varphi$ is not regular:
\begin{itemize}
    \item If $\varphi(u_1)=\varphi(u_2)$, then $d(u_1,u_2)\geq 3$ (we consider that $d(u_1,u_2)$ is $\infty$ when $u_1$ and $u_2$ are not in the same component of $\Lambda$).
    \item Isolated vertices map to isolated vertices.
\end{itemize}

The covering map $\varphi$ induces a suborder $\lesssim_\varphi$ on $V\Gamma$ as follows: for two vertices $v,v'\in V\Gamma$, we say $v \lesssim_\varphi v'$ if for every $u \in \varphi^{-1}(v)$, there exists a vertex $u' \in \varphi^{-1}(v')$ such that $\lk(u) \subseteq \st(u')$, and $v\sim_{\varphi}v'$ (or $v$ and $v'$ are \textit{$\varphi$-equivalent}) if $v_1\lesssim_\varphi v_2$ and $v_2\lesssim_\varphi v_1$.
Similarly for $V\Gamma_{\gtrsim v}$, we write $V\Gamma_{\gtrsim_\varphi v}$ for the set of vertices $v_i$ which satisfy $v_i\gtrsim_\varphi v$.

In general, however, the suborder $\lesssim_\varphi$ is not equal to the link-star order $\lesssim$ of $\Gamma$.
For instance, in Figure \ref{fig:2_fold_simplicial_cover}, the two vertices $u$ and $v$ in the base graph are comparable, that is $v \lesssim u$, but any $u_i\in\varphi^{-1}(u)$ and $v_j \in \varphi^{-1}(v)$ cannot be compared with the link-star order in $\Lambda$ so that $u$ and $v$ are not comparable under the suborder $\lesssim_{\varphi}$.

\begin{figure}
\centering
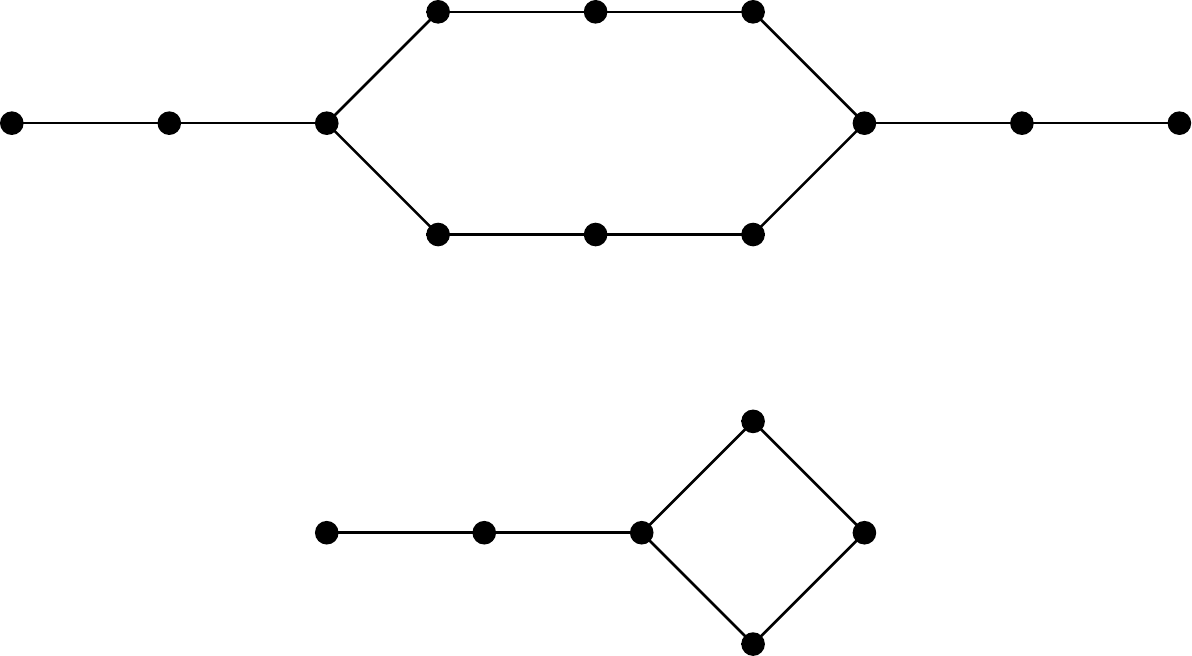
\caption{$v\lesssim u$ but $u$ and $v$ are not comparable under $\lesssim_{\varphi}$.}
\label{fig:2_fold_simplicial_cover}
\end{figure}

\begin{lemma} \label{lem:cover-preserving}
The link-star order is preserved by $\varphi$, i.e., for any $u_1,u_2\in V\Lambda$, if we have $u_1\lesssim u_2$, then we have $\varphi(u_1)\lesssim\varphi(u_2)$. Moreover, for any $u\in V\Lambda$, we have $\varphi(V\Lambda_{\gtrsim u})=V\Gamma_{\gtrsim_\varphi \varphi(u)}$.  
\end{lemma}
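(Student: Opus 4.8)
The plan is to exploit the defining local property of a covering map together with regularity. The one fact I would isolate first is that, since $\varphi$ is a covering map of graphs, for every $u\in V\Lambda$ the restriction of $\varphi$ to $\st_\Lambda(u)$ is a graph isomorphism onto $\st_\Gamma(\varphi(u))$; in particular $\varphi$ restricts to a bijection $\lk_\Lambda(u)\to\lk_\Gamma(\varphi(u))$, so that $\varphi(\lk_\Lambda(u))=\lk_\Gamma(\varphi(u))$ and $\varphi(\st_\Lambda(u))=\st_\Gamma(\varphi(u))$. Everything else should follow formally from this observation and the transitivity of the $\Deck(\varphi)$-action on fibers.

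For the first assertion I would argue directly. Suppose $u_1\lesssim u_2$, i.e.\ $\lk(u_1)\subseteq\st(u_2)$, and take any vertex $w\in\lk_\Gamma(\varphi(u_1))$. By the local bijection there is a unique $\tilde w\in\lk_\Lambda(u_1)$ with $\varphi(\tilde w)=w$; since $\lk_\Lambda(u_1)\subseteq\st_\Lambda(u_2)$, the vertex $\tilde w$ lies in $\st_\Lambda(u_2)$, whence $w=\varphi(\tilde w)\in\varphi(\st_\Lambda(u_2))=\st_\Gamma(\varphi(u_2))$. As $w$ was arbitrary this yields $\lk_\Gamma(\varphi(u_1))\subseteq\st_\Gamma(\varphi(u_2))$, that is $\varphi(u_1)\lesssim\varphi(u_2)$.

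For the ``moreover'' statement I would prove the two inclusions separately. The inclusion $V\Gamma_{\gtrsim_\varphi \varphi(u)}\subseteq\varphi(V\Lambda_{\gtrsim u})$ is immediate from the definition of $\lesssim_\varphi$: if $\varphi(u)\lesssim_\varphi v$, then, specializing the defining condition to the particular preimage $u\in\varphi^{-1}(\varphi(u))$, there is some $w\in\varphi^{-1}(v)$ with $\lk(u)\subseteq\st(w)$, i.e.\ $u\lesssim w$; thus $w\in V\Lambda_{\gtrsim u}$ and $v=\varphi(w)\in\varphi(V\Lambda_{\gtrsim u})$. Note that this inclusion does not use regularity.

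The reverse inclusion $\varphi(V\Lambda_{\gtrsim u})\subseteq V\Gamma_{\gtrsim_\varphi \varphi(u)}$ is where regularity is essential, and I expect it to be the only genuinely nontrivial step. Given $w\gtrsim u$ in $\Lambda$, I must verify that $\varphi(u)\lesssim_\varphi\varphi(w)$, i.e.\ that \emph{every} preimage $\tilde u\in\varphi^{-1}(\varphi(u))$ admits a preimage of $\varphi(w)$ above it in the link-star order, not merely the chosen lift $u$. Here I would invoke regularity: for any such $\tilde u$ there is a deck transformation $\sigma\in\Deck(\varphi)$ with $\sigma(u)=\tilde u$. Being a graph automorphism of $\Lambda$, $\sigma$ carries links to links and stars to stars, hence preserves the link-star order, so from $u\lesssim w$ we obtain $\tilde u=\sigma(u)\lesssim\sigma(w)$; and since $\varphi\circ\sigma=\varphi$ we have $\sigma(w)\in\varphi^{-1}(\varphi(w))$. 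Thus $\sigma(w)$ is the required witness, which completes the inclusion and the proof. The whole content of the lemma is concentrated in this last step: without transitivity of the deck action one could only control the single distinguished lift $u$ rather than all of $\varphi^{-1}(\varphi(u))$.
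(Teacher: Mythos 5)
Your proof is correct and follows the same overall route as the paper: the first statement via the local vertex bijection on links and stars, and the equality $\varphi(V\Lambda_{\gtrsim u})=V\Gamma_{\gtrsim_\varphi \varphi(u)}$ via the two separate inclusions. One comparison is worth recording. For the inclusion $\varphi(V\Lambda_{\gtrsim u})\subseteq V\Gamma_{\gtrsim_\varphi \varphi(u)}$, the paper simply asserts that it follows ``by the previous paragraph,'' i.e.\ from the fact that $u\lesssim u'$ implies $\varphi(u)\lesssim\varphi(u')$; but that alone only produces a witness above the one distinguished preimage $u$, whereas the definition of $\lesssim_\varphi$ quantifies over \emph{every} preimage of $\varphi(u)$. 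Your explicit deck-transformation step (move $u$ to an arbitrary preimage $\tilde u$ by some $\sigma\in\Deck(\varphi)$, note that $\sigma$ preserves the link-star order and $\varphi\circ\sigma=\varphi$, and take $\sigma(u')$ as the witness) is precisely the justification the paper leaves implicit, so your treatment of this step is actually more complete than the paper's, and you correctly identify it as the only place where regularity is used. One small caveat: your opening claim that $\varphi$ restricts to a graph \emph{isomorphism} $\st_\Lambda(u)\to\st_\Gamma(\varphi(u))$ is too strong --- as the paper itself remarks immediately after this lemma, the induced subgraph structure need not be preserved (an edge between two link vertices downstairs need not lift to an edge inside $\lk_\Lambda(u)$); what holds is the bijection on vertex sets. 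Since your argument only ever uses that vertex bijection, nothing breaks, but the preliminary observation should be phrased at the level of vertex sets.
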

\begin{proof}
Since $\varphi$ is locally an isomorphism, $\varphi$ induces a bijection between the vertices in $\lk(u_i)$ ($\st(u_i)$, resp.), and the vertices in $\lk(\varphi(u_i))$ ($\st(\varphi(u_i))$, resp.).
For $u'\in \lk(u_1)\subseteq\st(u_2)$, moreover, $\varphi(u')$ is contained in both $\lk(\varphi(u_1))$ and $\st(\varphi(u_2))$. 
By combining these facts, we have $\lk(\varphi(u_1))\subseteq\st(\varphi(u_2))$ and therefore, we have $\varphi(u_1) \lesssim \varphi(u_2)$, proving the first statement.

By the previous paragraph, for $u'\in V\Lambda_{\gtrsim u}$, we have $\varphi(u')\gtrsim_\varphi \varphi(u)$, and thus $\varphi(V\Lambda_{\gtrsim u})\subset V\Gamma_{\gtrsim_\varphi \varphi(u)}$. 
Conversely, for a vertex $v \in V\Gamma_{\gtrsim_\varphi \varphi(u)}$, by definition, there exists $u' \in \varphi^{-1}(v)$ such that $u \lesssim u'$, which implies that $v$ is contained in $\varphi(V\Lambda_{\gtrsim u})$. Therefore, we have $\varphi(V\Lambda_{\gtrsim u})=V\Gamma_{\gtrsim_\varphi \varphi(u)}$, concluding the proof.
\end{proof}

We note that even though there is a bijection between the vertex set of $\lk(u)$ and the vertex set of $\lk(\varphi(u))$, $\varphi(\lk(u))$ may not coincide with $\lk(\varphi(u))$ in general since we define the link of a vertex as an induced subgraph, and similarly for $\st(u)$. For instance, in Figure \ref{figure:NotLift}, $\lk(v_1)$ is the disjoint union of a vertex and an edge but $\lk(u_1)$ consists of three isolated vertices.

\begin{lemma} \label{lem:simplicial_criterion}
Suppose two vertices $u, u' \in \Lambda$ satisfy $u \lesssim u'$ and $\varphi(u) =\varphi(u')$. 
If $u$ is not an isolated vertex, we have $u=u'$. Otherwise, $u'$ is also an isolated vertex.
\end{lemma}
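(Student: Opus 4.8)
The plan is to split the statement into its two conclusions and handle them independently, relying on the two elementary facts recorded at the start of Section~\ref{subsec:simplicial}: that two \emph{distinct} vertices sharing the same image lie at distance at least $3$, and that $\varphi$, being a covering map, is a local isomorphism (as already exploited in the proof of Lemma~\ref{lem:cover-preserving}).

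First I would dispose of the ``Otherwise'' clause, where no real work is needed beyond the local isomorphism property. Since $\varphi$ restricts to a bijection between the vertices of $\lk(u)$ and those of $\lk(\varphi(u))$ for every $u\in V\Lambda$, we have $\deg_\Lambda u = \deg_\Gamma \varphi(u)$. As $\varphi(u)=\varphi(u')$, this yields $\deg_\Lambda u = \deg_\Lambda u'$. Hence if $u$ is isolated, i.e. $\deg_\Lambda u = 0$, then $\deg_\Lambda u' = 0$ and $u'$ is isolated as well. Note that the hypothesis $u \lesssim u'$ plays no role here, and is in any case vacuous once $\lk(u)=\emptyset$.

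Next, assuming $u$ is not isolated, I would show $d(u,u') \le 2$ and then invoke the distance fact to force $u = u'$. Choose any $w \in \lk(u)$; such a $w$ exists precisely because $u$ is not isolated. The hypothesis $u \lesssim u'$ means $\lk(u) \subseteq \st(u')$, so $w \in \st(u') = \lk(u') \cup \{u'\}$. If $w = u'$, then $u'$ is adjacent to $u$, giving $d(u,u') = 1$. If instead $w \in \lk(u')$, then $w$ is a common neighbour of $u$ and $u'$, giving $d(u,u') \le 2$. In either case $d(u,u') \le 2$, whereas the first elementary fact forces $d(u,u') \ge 3$ whenever $u \ne u'$ and $\varphi(u) = \varphi(u')$. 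Therefore $u = u'$.

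The argument is short and I do not expect a genuine obstacle. The only points requiring care are the case split on whether the chosen neighbour $w$ equals $u'$ or lies in $\lk(u')$, and the need to compare the resulting bound $d(u,u')\le 2$ against the $d \ge 3$ fact \emph{for distinct preimages}, so that the conclusion is the honest equality $u = u'$ rather than a vacuous statement. It is worth confirming that the elementary distance fact is indeed read as applying to distinct vertices.
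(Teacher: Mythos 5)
Your proof is correct and takes essentially the same route as the paper: the paper also combines the fact that distinct vertices in a common fiber are at distance at least $3$ with the observation that $u \lesssim u'$ forces $d(u,u') \le 2$ unless $u$ is isolated, handling the isolated case via the fact that fibers of isolated vertices consist of isolated vertices (which is your degree-preservation argument). Your write-up just makes explicit the case split $w = u'$ versus $w \in \lk(u')$ that the paper leaves implicit.
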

\begin{proof}
If $u$ is equal to $u'$, the proof is done. 

Suppose $u$ and $u'$ are distinct.
The conditions that $\varphi$ is a covering map of graphs and that $\varphi(u)=\varphi(u')$ imply that either $u$ and $u'$ are in two distinct components of $\Lambda$, or $d(u,u')\geq 3$.
The condition $u\lesssim u'$ implies that either $u$ is an isolated vertex or $d(u,u')\leq 2$.
Combining these two facts with the fact that the fiber of an isolated vertex consists of isolated vertices only, we can finish the proof.
\end{proof}

\begin{lemma}\label{EquivalentClass}
For a vertex $v\in V\Gamma$, let $[v]_\varphi$ be the $\varphi$-equivalent class of $v$. Then the subgraph induced by $[v]_\varphi$ is either complete or edgeless.
\end{lemma}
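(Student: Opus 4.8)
The plan is to reduce the statement to the analogous, already-recorded fact that each link-star equivalence class $[v]$ in $\Gamma$ induces either a complete or a totally disconnected subgraph (the first of the two ``well-known facts'' listed right after the definition of the link-star order). The bridge between the two statements is the observation that the relation $\lesssim_\varphi$ is a sub-relation of the ordinary link-star order $\lesssim$ on $V\Gamma$, so that the $\varphi$-equivalence class $[v]_\varphi$ is contained in the link-star class $[v]$.

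First I would show that $v\lesssim_\varphi v'$ implies $v\lesssim v'$. Indeed, pick any $u\in\varphi^{-1}(v)$; by the definition of $\lesssim_\varphi$ there is some $u'\in\varphi^{-1}(v')$ with $\lk(u)\subseteq\st(u')$, that is, $u\lesssim u'$ in $\Lambda$. Lemma~\ref{lem:cover-preserving} then yields $v=\varphi(u)\lesssim\varphi(u')=v'$. Applying this in both directions shows that $v\sim_\varphi v'$ implies $v\sim v'$, and hence $[v]_\varphi\subseteq[v]$.

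Second, since $[v]$ induces a complete or totally disconnected subgraph and $[v]_\varphi$ is a subset of $[v]$, the subgraph induced by $[v]_\varphi$ is an induced subgraph of a complete (resp.\ edgeless) graph, and is therefore itself complete (resp.\ edgeless). This finishes the argument.

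There is no serious obstacle here: the whole proof rests on the single implication $v\lesssim_\varphi v'\Rightarrow v\lesssim v'$, which is immediate from Lemma~\ref{lem:cover-preserving}. The only point to keep in mind is that $\lesssim_\varphi$ is genuinely weaker than $\lesssim$ and not conversely, as already illustrated by Figure~\ref{fig:2_fold_simplicial_cover}, where $v\lesssim u$ in $\Gamma$ while $u$ and $v$ are incomparable under $\lesssim_\varphi$. If one preferred to avoid quoting the fact about $[v]$, one could instead argue directly by taking three $\varphi$-equivalent vertices and using Lemma~\ref{lemma:adjacent} inside the class $[v]\supseteq[v]_\varphi$ to propagate adjacency, but this is strictly longer and offers no real advantage.
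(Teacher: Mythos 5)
Your proof is correct and follows essentially the same route as the paper: the paper's argument is precisely that $v\lesssim_\varphi v'$ implies $v\lesssim v'$, hence $[v]_\varphi$ is contained in the link-star class $[v]$, whose induced subgraph is complete or totally disconnected. You merely make explicit (via Lemma~\ref{lem:cover-preserving}) the implication that the paper states without elaboration, which is a fine level of detail.
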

\begin{proof}
Since $v_1\lesssim_\varphi v_2$ implies $v_1\lesssim v_2$, any $\varphi$-equivalence class is contained in some equivalence class induced from the link-star order. Since the latter class induces either a complete subgraph or a totally disconnected subgraph, the lemma holds.
\end{proof}

\subsection{Liftable inversions and transvections}\label{subsec:lift_inv_transv}
The case of inversions is the easiest. Indeed, they are all liftable. 

\begin{lemma} \label{lem:lift_sym_inv}
All inversions of $\Aut(A_\Gamma)$ are liftable. This statement even holds when $\varphi$ is not regular.
\end{lemma}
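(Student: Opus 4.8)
The plan is to build the lift $F$ directly by pushing the signs of $f$ along the fibers of $\varphi$. Since $f$ is an inversion, I can write $f(v) = v^{\epsilon(v)}$ for each $v \in V\Gamma$, where $\epsilon(v) \in \{+1,-1\}$. I would then define a map $F$ on the generating set $V\Lambda$ of $A_\Lambda$ by
\[
F(u) = u^{\,\epsilon(\varphi(u))} \qquad (u \in V\Lambda),
\]
so that each vertex $u$ of $\Lambda$ is sent to itself or its inverse according to the sign that $f$ attaches to the vertex $\varphi(u)$ lying below it.

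Next I would check that $F$ extends to an automorphism of $A_\Lambda$. This is immediate from the structure of the defining relations: any assignment of signs $\pm 1$ to the standard generators respects every relator $[u,u']=1$, because $[u^{\pm 1}, u'^{\pm 1}] = 1$ whenever $[u,u']=1$, and the induced endomorphism is inverted by reusing the same signs. Hence $F$ is itself an inversion of $A_\Lambda$, and in particular an automorphism.

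Finally I would verify the intertwining relation $f \circ \phi = \phi \circ F$. As both sides are homomorphisms, it suffices to check equality on the generators $u \in V\Lambda$. On one hand $\phi(F(u)) = \phi\bigl(u^{\epsilon(\varphi(u))}\bigr) = \varphi(u)^{\epsilon(\varphi(u))}$; on the other hand $f(\phi(u)) = f(\varphi(u)) = \varphi(u)^{\epsilon(\varphi(u))}$ by the definition of $\epsilon$. The two agree, so $F$ is a lift of $f$, and therefore $f$ is liftable.

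I do not expect any genuine obstacle in this argument; the only point worth emphasizing is precisely why regularity of $\varphi$ plays no role. The sign $\epsilon(\varphi(u))$ depends only on the image $\varphi(u)$, so the definition of $F$ is automatically consistent across each fiber, with no need for the deck group to act transitively. This is exactly the content of the second sentence of the lemma, and it explains why inversions behave differently from transvections, partial conjugations, and graph symmetries, whose liftability will genuinely depend on the combinatorics of $\varphi$.
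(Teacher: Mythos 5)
Your proof is correct and is essentially the same construction as the paper's: the lift inverts exactly the vertices lying over inverted vertices, and no regularity is needed. The only cosmetic difference is that the paper treats a single-vertex inversion $f_v$ and takes its lift to be the product of the single-vertex inversions over $\varphi^{-1}(v)$, whereas you handle a general inversion in one step via the sign function $\epsilon$.
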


\begin{proof}
Let $f_v\in \Aut(A_\Gamma)$ be the inversion sending the vertex $v$ of $\Gamma$ to $v^{-1}$ (and fixing all other vertices).
For each $u_i\in\varphi^{-1}(v)$, let $F_{u_i}\in\Aut(A_\Lambda)$ be the inversion sending the vertex $u_i$.
Then the product of inversions $$\tilde{f}_v(u):=\prod_i F_{u_i} = \begin{cases} u^{-1} & \text{if }u \in \varphi^{-1}(v), \\ u,&\text{otherwise,} \end{cases}$$ is a lift of $f_v$.
\end{proof}

On the other hand, not all transvections on $A_\Gamma$ are liftable. Instead, one can find a sufficient condition for the liftability of transvections by using the suborder $\lesssim_\varphi$.

\begin{lemma} \label{lem:lift_transvection}
For two vertices $v,v'\in V\Gamma$ satisfying $v\lesssim v'$, the transvection $T_{v}^{v'}\in \Aut(A_\Gamma)$ is liftable if $v \lesssim_\varphi v'$.
\end{lemma}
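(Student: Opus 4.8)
The plan is to construct an explicit lift $F \in \Aut(A_\Lambda)$ of $T_v^{v'}$ as a product of transvections of $A_\Lambda$, one supported on each vertex of the fiber $\varphi^{-1}(v)$. Recall that $T_v^{v'}$ sends $v \mapsto v'v$ and fixes every other vertex; since it is a genuine transvection we have $v \neq v'$, and therefore the fibers $\varphi^{-1}(v)$ and $\varphi^{-1}(v')$ are disjoint. This disjointness is exactly what makes the construction go through cleanly.

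First I would use the hypothesis $v \lesssim_\varphi v'$: by definition, for each $u \in \varphi^{-1}(v)$ there is a vertex $u' = u'(u) \in \varphi^{-1}(v')$ with $\lk(u) \subseteq \st(u')$, that is, $u \lesssim u'$. Hence each $T_u^{u'}$ (sending $u \mapsto u'u$ and fixing all other vertices of $\Lambda$) is a well-defined transvection in $\Aut(A_\Lambda)$, and I set
$$F = \prod_{u \in \varphi^{-1}(v)} T_u^{u'}.$$

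The key step is to verify that $F$ has the expected action irrespective of the order chosen for the product, namely $F(u) = u'u$ for $u \in \varphi^{-1}(v)$ and $F(w) = w$ for every other vertex $w$. Here the separation of the fibers is used decisively: each target vertex $u'$ lies in $\varphi^{-1}(v')$ and each source vertex $u$ lies in $\varphi^{-1}(v)$, so no $u'$ coincides with any $u$; consequently a factor $T_{u_i}^{u_i'}$ fixes both $u_j$ and $u_j'$ whenever $i \neq j$, the factors do not interfere, and the product is well-defined with the stated effect. In particular $F$ is an automorphism, being a finite product of automorphisms.

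Finally, I would check $\phi \circ F = T_v^{v'} \circ \phi$ on the generators of $A_\Lambda$. For $u \in \varphi^{-1}(v)$ we compute $\phi(F(u)) = \varphi(u')\varphi(u) = v'v = T_v^{v'}(v) = T_v^{v'}(\phi(u))$, while for any vertex $w$ with $\varphi(w) \neq v$ both sides equal $\varphi(w)$, since $F$ fixes $w$ and $T_v^{v'}$ fixes $\varphi(w)$. Thus $F$ is a lift of $T_v^{v'}$, as required. The only genuine subtlety, and the point I would argue most carefully, is the well-definedness of the product $F$; as noted, this rests entirely on the disjointness of the two fibers, and once that is in hand the remaining verification is a direct computation on generators.
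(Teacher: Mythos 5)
Your proof is correct and follows essentially the same route as the paper: both use the hypothesis $v \lesssim_\varphi v'$ to choose, for each $u \in \varphi^{-1}(v)$, a vertex $u' \in \varphi^{-1}(v')$ with $u \lesssim u'$, form the product $\prod_u T_u^{u'}$, and verify the lift equation on generators. The only difference is that you explicitly justify that the factors commute (via disjointness of the fibers), a point the paper leaves implicit; this is a welcome addition, not a divergence.
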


\begin{proof}
Let $u_1,\cdots,u_n$ be the collection of vertices in $\varphi^{-1}(v)$.
By definition, for each $u_i$, there exists a unique vertex $u'_i\in \varphi^{-1}(v')$ such that $u_i\lesssim u'_i$ and hence the transvection $T_{u_i}^{u'_i}$ is a well-defined automorphism of $A_\Lambda$.
Let $$\widetilde{T} := \prod_{i} T_{u_i}^{u'_i}$$ be an element in $\Aut(A_\Lambda)$. 
Then we have $\phi\circ\widetilde{T}(u_i) = \phi(u_i u'_i) = v v' = T_v^{v'}(\phi(u_i))$ for each $u_i$, and $\phi\circ\widetilde{T}(u) = \phi(u) = T_v^{v'}(\phi(u))$ for other vertices $u$.
Therefore, $\widetilde{T}$ is a lift of $T_{v}^{v'}$.
\end{proof}

In Section \ref{sec:lift_transv}, we show that the converse of Lemma \ref{lem:lift_transvection} also holds (Corollary \ref{lem:lift_transvection2}) if $\Gamma$ and $\Lambda$ have no isolated vertices.

\subsection{Liftable partial conjugations} \label{subsec:lift_part_conj}
Certainly, relying on Lemma~\ref{lem:inner}, we can deduce that specific inner automorphisms of $A_\Gamma$ are liftable; in fact, every inner automorphism is liftable.

\begin{lemma} \label{lem:inner_liftable}
Every inner automorphism $\iota$ of $A_\Gamma$ is liftable.
More precisely, there exists an inner automorphism $\tilde\iota$ of $A_\Lambda$ such that $\phi\circ\tilde\iota(x) = \iota\circ\phi(x)$ for all $x \in A_\Lambda$.
\end{lemma}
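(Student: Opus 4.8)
The plan is to reduce this statement to the surjectivity assertion of Lemma~\ref{lem:inner}, which was already proved for an arbitrary surjective homomorphism of groups. First I would record that $\phi$ is surjective: since $\varphi$ is a covering map it is surjective on vertices, so every standard generator of $A_\Gamma$ lies in the image of $\phi$, and hence $\phi$ is onto (this is exactly the fact noted just before Lemma~\ref{lem:inner}). With surjectivity in hand, I would apply Lemma~\ref{lem:inner} with $G=A_\Lambda$ and $H=A_\Gamma$ to obtain a surjective homomorphism $\Phi_I:\Inn(A_\Lambda)\to\Inn(A_\Gamma)$ satisfying $\Phi_I(\tilde\iota)(\phi(x))=\phi(\tilde\iota(x))$ for all $\tilde\iota\in\Inn(A_\Lambda)$ and all $x\in A_\Lambda$.

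Given an inner automorphism $\iota\in\Inn(A_\Gamma)$, the surjectivity of $\Phi_I$ produces a preimage $\tilde\iota\in\Inn(A_\Lambda)$ with $\Phi_I(\tilde\iota)=\iota$; since preimages under $\Phi_I$ are inner by construction, $\tilde\iota$ is an inner automorphism of $A_\Lambda$. Substituting $\Phi_I(\tilde\iota)=\iota$ into the defining relation then gives precisely $\phi\circ\tilde\iota(x)=\iota\circ\phi(x)$ for every $x\in A_\Lambda$, which is the claimed identity. Alternatively, and more explicitly, I would write $\iota=\iota_g$ for some $g\in A_\Gamma$, choose $h\in A_\Lambda$ with $\phi(h)=g$ using surjectivity of $\phi$, and set $\tilde\iota=\iota_h$; the verification $\phi(hxh^{-1})=\phi(h)\phi(x)\phi(h)^{-1}=g\,\phi(x)\,g^{-1}=\iota(\phi(x))$ is then a one-line computation that uses only that $\phi$ is a homomorphism.

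I do not expect any genuine obstacle here: the sole piece of content is the surjectivity of $\phi$, which is immediate from $\varphi$ being a covering map, and everything else is the abstract group-theoretic fact already established in Lemma~\ref{lem:inner}. In effect this lemma is a specialization of Lemma~\ref{lem:inner} to the RAAG setting, stated separately because it is the first concrete instance of ``liftability'' and it pins down the convention that a lift of an inner automorphism may always be taken to be inner.
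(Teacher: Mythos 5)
Your proposal is correct and follows essentially the same route as the paper: the paper's proof is precisely your ``more explicit'' alternative, namely writing $\iota=\iota_y$, choosing $z\in\phi^{-1}(y)$, and verifying $\phi(zxz^{-1})=y\phi(x)y^{-1}=\iota(\phi(x))$. Your first formulation via the surjectivity of $\Phi_I$ from Lemma~\ref{lem:inner} is just a repackaging of that same computation, so there is no substantive difference.
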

\begin{proof}
Let $y \in A_\Gamma$ be an element satisfying $\iota(y') = yy'y^{-1}$ for all $y' \in A_\Gamma$.
Choose $z \in \phi^{-1}(y)$, and let $\tilde\iota$ denote the inner automorphism conjugating elements by $z$.
Then for all $x \in A_\Lambda$, we have $\phi\circ\tilde\iota(x) = \phi(zxz^{-1}) = y\phi(x)y^{-1} = \iota\circ\phi(x)$.
\end{proof}

In general, however, partial conjugations may not be liftable. In order to write down a sufficient condition for a partial conjugation to be liftable, we need the definition of a kind of partition of the set of components of a vertex inducing partial conjugations.

For each vertex $v \in \Gamma$ ($u\in \Lambda$, resp.), let $\mathcal{C}(v)$ ($\mathcal{C}(u)$, resp.) be the collection of components of $\Gamma \setminus \st(v)$ ($\Lambda\setminus \st(u)$, resp.). 
For a connected subgraph $\widetilde{A}$ of $\Lambda$ not intersecting $\st(u)$, let $C(u,\widetilde{A})$ be the component in $\mathcal{C}(u)$ containing $\widetilde{A}$. 
For $B \in \mathcal{C}(v)$ and a component $\widetilde{B}$ of $\varphi^{-1}(B)$, we define a subgraph $\bar{B}$ as the image of the intersection of the $C(u,\widetilde{B})$'s for all $u \in \varphi^{-1}(v)$ under $\varphi$, i.e., $\bar{B} = \varphi(D)$ where 
$$D=\bigcap_{u \in \varphi^{-1}(v)} C(u,\widetilde{B}).$$
Indeed, $\bar{B}$ does not depend on the choice of $\widetilde{B}$ since $\varphi$ is regular. 

We remark that $\bar{B}$ is a union of elements in $\mathcal{C}(v)$ containing $B$.
If $B'\in\mathcal{C}(v)$ intersects $\bar{B}$, then there exists a component $\widetilde{B}'$ of $\varphi^{-1}(B')$ such that $\widetilde{B}'$ and $D$ intersect.
For each $u \in \varphi^{-1}(v)$, since $\widetilde{B}'$ intersects $C(u,\widetilde{B})$ but not $\st(u)$, we have $\widetilde{B}'\subseteq C(u,\widetilde{B}')=C(u,\widetilde{B})$.
Thus, we have $\widetilde{B}' \subseteq \bigcap_{u\in\varphi^{-1}(v)} C(u,\widetilde{B}')=D$, 
which implies that $B' \subseteq \bar{B'}=\bar{B}$.
In particular, for any $B_1,B_2\in\mathcal{C}(v)$, either $\bar{B_1}=\bar{B_2}$ or $\bar{B_1}\cap\bar{B_2}=\emptyset$.

We say two disjoint subgraphs are \emph{adjacent} if there is an edge such that one endpoint is contained in one subgraph and the other endpoint is contained in the other subgraph.  

\begin{lemma}\label{lem:Minimality}
Suppose $\Gamma$, $v$, $B$, $\widetilde{B}$, $C(u,\widetilde{B})$'s for each $u\in\varphi^{-1}(v)$ and $D$ are given as above. 
If $\Gamma$ and $\Lambda$ are connected, then either $\varphi^{-1}(B)\subseteq D$ or $C(u,\widetilde{B})=D$ for a vertex $u\in\varphi^{-1}(v)$ whose star is adjacent to $\widetilde{B}$.
\end{lemma}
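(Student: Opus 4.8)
The plan is to reduce the statement to a single connectivity property of an auxiliary graph, in which the regularity of $\varphi$ shows up as vertex-transitivity. First I would record two structural facts. Since $\varphi(u)=\varphi(u')=v$ forces $d(u,u')\ge 3$, the stars $\st(u)$ for $u\in\varphi^{-1}(v)$ are pairwise disjoint and $\varphi^{-1}(\st(v))=\bigsqcup_{u}\st(u)$; hence the components of $\Lambda\setminus\varphi^{-1}(\st(v))$—call them \emph{pieces}—are exactly the components of the various $\varphi^{-1}(B')$ with $B'\in\mathcal{C}(v)$, so $\widetilde{B}$ and every other lift of $B$ is a piece. Because each $C(u,\widetilde{B})$ avoids $\st(u)$, the intersection $D$ avoids every $\st(u)$; thus $D$ is a union of pieces and $\widetilde{B}\subseteq D$. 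I would then pass to the graph $\mathcal{G}$ obtained from $\Lambda$ by contracting each piece and each star $\st(u)$ to a vertex. As pieces and stars are connected, two of them lie in the same component of $\Lambda\setminus\st(u)$ if and only if their images lie in the same component of $\mathcal{G}$ after deleting the vertex $\st(u)$; thus $C(u,\widetilde{B})$ corresponds to the component of $\mathcal{G}-\st(u)$ containing $\widetilde{B}$.

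Next comes the dichotomy. If $C(u_0,\widetilde{B})=\widetilde{B}$ for some $u_0\in\varphi^{-1}(v)$, then $\widetilde{B}\subseteq D\subseteq C(u_0,\widetilde{B})=\widetilde{B}$ forces $D=\widetilde{B}=C(u_0,\widetilde{B})$; since $\Lambda$ is connected, $\widetilde{B}$ has a neighbour, which must then lie in $\st(u_0)$, so $\st(u_0)$ is adjacent to $\widetilde{B}$ and we are in the second alternative. Otherwise $C(u,\widetilde{B})\ne\widetilde{B}$ for every $u$; because the neighbours of a piece lie only in stars, this forces $\widetilde{B}$ to be adjacent to at least two distinct stars, and by regularity (a deck transformation carries $\widetilde{B}$ to any other lift of $B$) the same holds for every lift of $B$.

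In this remaining case I would establish the first alternative by proving $C(u,\widetilde{B})\supseteq\varphi^{-1}(B)$ for every $u$, whence $D=\bigcap_u C(u,\widetilde{B})\supseteq\varphi^{-1}(B)$. The key point—and the main obstacle—is that deleting a single star from $\mathcal{G}$ leaves all the remaining stars in one component. Here regularity is essential: let $\mathcal{G}'$ be the graph whose vertices are the stars, with two stars joined when they are adjacent in $\Lambda$ or share an adjacent piece. Then $\mathcal{G}'$ is connected (because $\mathcal{G}$, and hence $\Lambda$, is connected) and $\Deck(\varphi)$ acts on it transitively on vertices (regularity), so $\mathcal{G}'$ is a connected vertex-transitive graph and therefore has no cut vertex—indeed, a connected graph on at least two vertices has at least two non-cut vertices, so vertex-transitivity precludes any cut vertex. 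Translating back, for each $u$ all stars other than $\st(u)$ lie in a single component $K$ of $\mathcal{G}-\st(u)$. Finally, every lift of $B$ is adjacent to at least two stars, hence to some star other than $\st(u)$, and so lies in $K$; in particular $\widetilde{B}\in K$, whence $K=C(u,\widetilde{B})$ contains every lift of $B$. This yields $\varphi^{-1}(B)\subseteq D$ and finishes the argument.

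The hard part is precisely the no-cut-vertex step: without regularity the star graph $\mathcal{G}'$ need not be vertex-transitive, and a single star could separate two lifts of $B$—exactly the phenomenon the hypothesis excludes. The only other point needing care is the routine verification that connectivity in $\Lambda\setminus\st(u)$ is faithfully recorded by $\mathcal{G}-\st(u)$, which follows from the connectedness of the individual pieces and stars.
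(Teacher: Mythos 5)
Your proof is correct, and it takes a genuinely different route from the paper's. Both arguments open with the same dichotomy --- your case $C(u_0,\widetilde{B})=\widetilde{B}$ is precisely the paper's case in which exactly one star over $v$ is adjacent to $\widetilde{B}$, and both handle it the same way --- but the main case is treated very differently. The paper works directly with paths in $\Lambda$: it fixes a nontrivial deck transformation $\sigma$ of order $k$, takes an induced path $\ell$ from $b\in\widetilde{B}$ to $\sigma(b)$ subject to a normalization (its property $(\clubsuit)$), repeatedly modifies $\ell$ so that it crosses only one star adjacent to $\widetilde{B}$ and only one star adjacent to $\sigma(\widetilde{B})$, and closes the argument by concatenating the translates $\sigma(\ell)\cup\cdots\cup\sigma^{k-1}(\ell)$; regularity enters through the ability to translate paths. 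You instead package regularity into a single structural statement: the star graph $\mathcal{G}'$ is connected and vertex-transitive under $\Deck(\varphi)$, hence has no cut vertex, so deleting any one star leaves all the others in one component. Your route buys a cleaner argument (no induced-path bookkeeping) and, importantly, treats every $u\in\varphi^{-1}(v)$ uniformly: the paper proves $\sigma(\widetilde{B})\subseteq C(u_i,\widetilde{B})$ only for stars $\st(u_i)$ adjacent to $\widetilde{B}$ and covers the remaining $u$ with a rather terse appeal to regularity (``it suffices to show\dots''), a reduction your argument makes unnecessary. What the paper's route buys is self-containment: no auxiliary quotient graphs and no external fact about vertex-transitive graphs. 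The two places where you are terse are routine and easily completed: in Case A, a neighbour of $\widetilde{B}$ outside $\st(u_0)$ would lie in $C(u_0,\widetilde{B})=\widetilde{B}$, which is absurd, so all neighbours lie in $\st(u_0)$; and $\Deck(\varphi)$ acts transitively on the components of $\varphi^{-1}(B)$ because it acts transitively on the fiber of any single vertex of $B$, which is what propagates ``adjacent to at least two stars'' from $\widetilde{B}$ to every lift of $B$.
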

\begin{proof}
If there is a path from a vertex in $\widetilde{B}$ to a vertex in $\Lambda\setminus\widetilde{B}$, then the path must pass through $\st(u)$ for some $u\in\varphi^{-1}(v)$.
If $u$ is the only vertex in $\varphi^{-1}(v)$ whose star is adjacent to $\widetilde{B}$, then $D=C(u,\widetilde{B})$.

Suppose $u_1,\cdots,u_n$ $(n\geq2)$ are vertices in $\varphi^{-1}(v)$ whose stars are adjacent to $\widetilde{B}$. Note that $C(u_i,\widetilde{B})$ contains $\st(u_j)$ for any $j\neq i$.
Let $b$ be a vertex in $\widetilde{B}$ and $\sigma$ a non-trivial deck transformation in $\Deck(\varphi)$ with order $k$.
Since $\varphi$ is regular, in order to finish the proof, it suffices to show that $C(u_i,\widetilde{B})$ contains $\sigma(\widetilde{B})$.

Since $\Lambda$ is assumed to be connected, there exists an induced path $\ell$ from $b$ to $\sigma(b)\in\sigma(\widetilde{B})$ satisfying the following property: 
\begin{itemize}
    \setlength{\itemindent}{0em}
    \item[$(\clubsuit)$] for a component $A$ of $\varphi^{-1}(\st(v))$ or $\varphi^{-1}(B')$ for $B'\in\mathcal{C}(v)$, $\ell\cap A$ is either empty or connected.
\end{itemize}
If $\ell$ passes through $\st(u_i)$ and $\st(u_j)$ for $i\neq j$, then one can modify $\ell$ to be an induced path, satisfying the property $(\clubsuit)$, which passes through exactly one of $\st(u_i)$ and $\st(u_j)$; this is possible since $\st(u_i)$ and $\st(u_j)$ are disjoint.
Repeating this process, one can have an induced path $\ell$, satisfying the property $(\clubsuit)$, from $b$ to $\sigma(b)$ which passes through only one of the $\st(u_i)$'s, say $\st(u_1)$.
Similarly, one can further modify $\ell$ to be an induced path, satisfying the property $(\clubsuit)$ that passes through only one of $\st(\sigma(u_i))$'s, say $\st(\sigma(u_j))$ (possibly, $u_1=\sigma(u_j)$).
If $u_i\neq u_1$, then $C(u_i,\widetilde{B})$ contains $\sigma(\widetilde{B})$ due to the existence of $\ell$.
Otherwise, since $\sigma^{k-1}(u_j)$ is one of $u_i$'s which is not $u_1$, the existence of the path $\sigma(\ell)\cup\cdots\cup\sigma^{k-1}(\ell)$ implies that $C(u_i,\widetilde{B})$ contains $\sigma(\widetilde{B})$.
\end{proof}

In the above lemma, it can easily be deduced that if $\varphi^{-1}(B)\subseteq D$ and there is $B_1\in\mathcal{C}(v)$ contained in $\bar{B}$, then $\varphi^{-1}({B}_1)\subseteq D$.
In particular, if $\st(u)$ is adjacent to $\widetilde{B}$ for some $u\in\varphi^{-1}(v)$, then there is a component $\widetilde{B}_1$ of $\varphi^{-1}(B_1)$ such that $\widetilde{B}_1$ is adjacent to $\st(u)$ and $C(u,\widetilde{B})$ contains $\widetilde{B}_1$.


\begin{proposition} \label{prop:partial_lift}
Let $\varphi:\Lambda\to\Gamma$ be a regular covering map of graphs.
For every vertex $v \in \Gamma$ and $B \in \mathcal{C}(v)$, the partial conjugation $P_{\bar{B}}^v$ is liftable. 
\end{proposition}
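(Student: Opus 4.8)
The plan is to produce an explicit lift $F\in\Aut(A_\Lambda)$, built as a product of partial conjugations by the vertices in $\varphi^{-1}(v)$, and to verify that $\phi\circ F=P^v_{\bar B}\circ\phi$. Since $\phi\circ F$ and $P^v_{\bar B}\circ\phi$ are both homomorphisms $A_\Lambda\to A_\Gamma$, it is enough to check equality on each vertex $w\in V\Lambda$. As $F$ is a product of partial conjugations it sends $w$ to a conjugate $g_w\,w\,g_w^{-1}$, so the entire problem reduces to arranging that $\phi(g_w)=v$ when $\varphi(w)\in\bar B$ and $\phi(g_w)=1$ otherwise.

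First I would reduce to the case where both $\Gamma$ and $\Lambda$ are connected. The elements of $\mathcal{C}(v)$ lying in components of $\Gamma$ other than the one containing $v$ (this includes the situation where $v$ is isolated) are conjugated by $v$ across a vertex set having no edges to $v$; their preimages are unions of full components of $\Lambda$, so these pieces lift by conjugating each such component by one chosen preimage of $v$, with no interaction between the factors. On the remaining part one restricts to the component $\Gamma_0$ of $v$ and to $\varphi^{-1}(\Gamma_0)$; if the latter is disconnected, its components are permuted transitively by $\Deck(\varphi)$, and using $\phi\circ\sigma=\phi$ a lift constructed on one component $\Lambda^{(1)}$ spreads equivariantly to the others via $F|_{\Lambda^{(j)}}=\sigma_j F^{(1)}\sigma_j^{-1}$. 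One checks that the subgraph $D$ and hence $\bar B$ are unchanged under this restriction, so I may assume $\Gamma$ and $\Lambda$ connected and invoke Lemma~\ref{lem:Minimality}.

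Lemma~\ref{lem:Minimality} produces a dichotomy according to how many stars $\st(u)$, $u\in\varphi^{-1}(v)$, are adjacent to a fixed component $\widetilde B$ of $\varphi^{-1}(B)$. If a unique star $\st(u_0)$ is adjacent to $\widetilde B$, then $D=C(u_0,\widetilde B)$; since $D$ avoids every star $\st(u_i)$ it is a single component of $\Lambda\setminus\varphi^{-1}(\st(v))$, necessarily equal to $\widetilde B$, and $\bar B=\varphi(\widetilde B)=B$. In this case I would set
\[
F=\prod_{\sigma\in\Deck(\varphi)}P^{\sigma(u_0)}_{\sigma(\widetilde B)} .
\]
Each factor is a genuine partial conjugation because $\sigma(\widetilde B)=C(\sigma(u_0),\sigma(\widetilde B))$ is a component of $\Lambda\setminus\st(\sigma(u_0))$. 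As $\widetilde B$ is adjacent to a unique star and $\Deck(\varphi)$ acts freely on the fiber, the translates $\sigma(\widetilde B)$ are pairwise disjoint and cover $\varphi^{-1}(B)$; moreover no vertex of $\varphi^{-1}(v)$ lies in any $\sigma(\widetilde B)$, so the conjugating letters are never moved and the factors do not nest. Hence $F(w)=\sigma(u_0)\,w\,\sigma(u_0)^{-1}$ for $w\in\sigma(\widetilde B)$ and $F(w)=w$ otherwise, and applying $\phi$ (which sends every $\sigma(u_0)$ to $v$) gives precisely $P^v_{\bar B}\circ\phi$.

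The remaining case, where $\widetilde B$ is adjacent to at least two stars, is the main obstacle. Here Lemma~\ref{lem:Minimality} together with the corollary following it yields $\varphi^{-1}(B)\subseteq D$ and hence $D=\varphi^{-1}(\bar B)$, a $\Deck(\varphi)$-invariant union of components of $\Lambda\setminus\varphi^{-1}(\st(v))$ that is \emph{not} a union of components of $\Lambda\setminus\st(u_i)$ for any single $i$; thus no single partial conjugation conjugates $D$, and the naive product $\prod_i P^{u_i}_{C(u_i,\widetilde B)}$ conjugates each vertex of $D$ by $v^{n}$ instead of $v$. The difficulty is precisely to cancel this over-conjugation: partial conjugations by distinct preimages of $v$ neither commute nor fix one another's conjugating letters, so the nested conjugators must be tracked carefully. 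The natural approach is an inclusion–exclusion over the subsets $S\subseteq\varphi^{-1}(v)$ of stars separating a vertex from $\widetilde B$, conjugating the regions $\bigcap_{u\in S}\bigl(C(u,\widetilde B)\bigr)^{c}$ with alternating signs so that the net exponent of $v$ on each $w$ collapses to the indicator of $w\in D$, the empty set contributing the inner automorphism $\iota_v$ (liftable by Lemma~\ref{lem:inner_liftable}). Realizing each such region by honest partial conjugations along the stars in $S$ and making the choice of conjugators compatible with the $\Deck(\varphi)$-action—which is exactly what the corollary to Lemma~\ref{lem:Minimality} is designed to permit—is the technical heart of the argument, and I expect it to be the step requiring the most care.
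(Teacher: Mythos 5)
Your reduction to connected $\Gamma$ and $\Lambda$ and your handling of the case where a unique star meets $\widetilde B$ are correct, and they coincide with the paper's argument (the paper uses the same product $\prod_{\sigma\in\Deck(\varphi)}\widetilde P_{\sigma(\widetilde B)}^{\sigma(u_0)}$; your extra observation that $D=\widetilde B$ in that case is true). The genuine gap is the remaining case, where at least two stars meet $\widetilde B$ --- which you yourself flag as ``the technical heart'' and do not carry out. The inclusion--exclusion you sketch there, over subsets $S\subseteq\varphi^{-1}(v)$ with alternating signs, is never made precise and cannot work as stated: you would need to realize each region $\bigcap_{u\in S}(C(u,\widetilde B))^{c}$ by ``honest partial conjugations,'' but these regions are not unions of components of the complement of the star of any single vertex, and you give no construction for them.

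The idea you are missing, which is how the paper closes this case, is that no cancellation scheme is needed because the over-conjugation of the naive product is globally uniform. Writing $C_i=C(u_i,\widetilde B)$ and $E_i=\Lambda\setminus C_i$, one first shows that the $E_i$ are pairwise disjoint, so that $\Lambda$ is partitioned as $D\sqcup E_1\sqcup\cdots\sqcup E_n$: any component $C$ of $\Lambda\setminus\st(u_k)$ other than $C_k$ satisfies $C\subseteq C_j$ for every $j\neq k$, because $\st(u_k)\subseteq C_j$ and $C\cup\st(u_k)$ is connected and avoids $\st(u_j)$. Hence every vertex outside $D$ lies in exactly $n-1$ of the $C_i$, and the naive product $\widetilde P=\widetilde P_{C_1}^{u_1}\cdots\widetilde P_{C_n}^{u_n}$ satisfies $\phi\circ\widetilde P(w)=v^{n}\varphi(w)v^{-n}$ for $w\in D=\varphi^{-1}(\bar B)$ and $\phi\circ\widetilde P(w)=v^{n-1}\varphi(w)v^{-(n-1)}$ for all other vertices $w$; the fact that the factors move each other's conjugating letters, which worries you, is harmless because $\phi$ sends every $u_j$ to $v$, so all such discrepancies vanish in $A_\Gamma$. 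Thus $\phi\circ\widetilde P=(P_{\bar B}^{v}\,\iota^{\,n-1})\circ\phi$, where $\iota$ is the inner automorphism by $v$, i.e.\ $\widetilde P$ is a lift of $P_{\bar B}^{v}\iota^{\,n-1}$; since $\iota$ is liftable by Lemma~\ref{lem:inner_liftable}, $P_{\bar B}^{v}$ is liftable. Put differently, your inclusion--exclusion degenerates completely --- every region with $|S|\ge 2$ is empty, which is exactly the partition fact above --- and the only correction needed is one global inner automorphism; establishing that degeneration is precisely the step your proposal leaves open.
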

\begin{proof}
If $B$ is not contained in a component of $\Gamma$ containing $v$, then 
we have $\bar{B}=B$ and thus the partial conjugation $\widetilde{P}_{\varphi^{-1}(B)}^u$ for any vertex $u\in\varphi^{-1}(v)$ is a lift of $P_{\bar{B}}^v$. 
So, we assume that $\Gamma$ is connected.

We first see the case when $\Lambda$ is connected. Let $\varphi^{-1}(v)=\{u_i\mid i\in I\}$ and let $\widetilde{B}$ be a component of $\varphi^{-1}(B)$ and $C_i=C(u_i,\widetilde{B})$.
Let $D=\bigcap_{i\in I}{C_i}$.
By Lemma~\ref{lem:Minimality}, among $\st(u_i)$'s, every one is adjacent to $D$, or only one is.
In the latter case, $D=C(u_i,\widetilde{B})$ for some $i\in I$ and thus $D\cap\sigma(D)=\emptyset$ for any deck transformation $\sigma$. Then, 
$$\prod_{\sigma\in\Deck(\varphi)}\sigma\widetilde{P}_{D}^{u_i}\sigma^{-1}=\prod_{\sigma\in\Deck(\varphi)}\widetilde{P}_{C(\sigma(u_i),\sigma(\widetilde{B}))}^{\sigma(u_i)}$$ 
is a lift of $P_{\bar{B}}^v$. 

Suppose $D$ is adjacent to all the $\st(u_i)$'s. 
Let $C\in \mathcal{C}(u_k)\setminus\{C_k\}$. Then $C$ does not contain any of $\st(u_i)$'s.
Since $C_i$ for $i\neq k$ contains $\st(u_k)$, it also contains $C$.
Thus $C$ is contained in $(C_1 \cap \dots \cap \hat{C}_k \cap \dots \cap C_n)=\bigcap_{i\neq k}C_i$.
Let $E_i = \Lambda \setminus C_i$.
Then we have $E_i = (C_1 \cap \dots \cap \hat{C}_i \cap \dots \cap C_n) \setminus D$. It means that $\Lambda$ is partitioned into $\{ D\}\cup\{E_i\mid i\in I \}$.
Then the product $\widetilde{P} = \widetilde{P}_{C_1}^{u_1} \dots \widetilde{P}_{C_n}^{u_n}\in \Aut(A_\Lambda)$ has the following rules: 
$$\widetilde{P}(w) = \begin{cases} u_1 \dots u_n w u_n^{-1} \dots u_1^{-1} & \text{if} ~ w \in D ~ \text{and} \\ u_1 \dots \hat{u}_i \dots u_n w u_n^{-1} \dots \hat{u}_i^{-1} \dots u_1^{-1} & \text{if} ~ w \in E_i, \end{cases}$$ 
and
$$\phi\circ \widetilde{P}(w) = \begin{cases} v^{n} \varphi(w) v^{-n} & \text{if} ~ w \in \varphi^{-1}(\bar{B}) ~ \text{and} \\ v^{(n-1)} \varphi(w) v^{-(n-1)}, & \text{otherwise.} \end{cases}$$
For the inner automorphism $\iota$ by $v$, then the above equation deduces the equation $\phi\circ \widetilde{P} = P_{\bar{B}}^v \iota^{(k-1)}\circ \phi$, i.e., $P_{\bar{B}}^v \iota^{(k-1)}\in\Aut(A_\Gamma)$ is liftable.
Therefore, by Lemma \ref{lem:inner_liftable}, $P_{\bar{B}}^v$ is liftable.

Finally, suppose $\Lambda$ consists of $\Lambda_1,\cdots,\Lambda_m$. Since $\varphi$ is regular, all the $\Lambda_j$'s are isomorphic.
From the restriction of $\varphi$ to $\Lambda_1$, we can find a lift $\widetilde{P}$ of $P_{\bar{B}}^v$ in $\Aut(A_{\Lambda_1})$. Note that $\widetilde{P}$ can be considered as an element in $\Aut(\Lambda)$ by fixing all the vertices in $\Lambda_j$'s for $j\neq 1$. Let $\sigma_j$ be a deck transformation sending $\Lambda_1$ to $\Lambda_j$. Then $$\prod_{j=1}^{m}\sigma_j\widetilde{P}\sigma_j^{-1}$$ is a desired lift of $P_{\bar{B}}^v$.
\end{proof}


For $C = \bigsqcup_{i=1}^n B_i$ with $B_1, \dots, B_n \in C(v)$, we can naturally define $\bar{C}$ as $\bigcup_i \bar{B_i}$. 

\begin{corollary}\label{prop:sufficient_condition_partial}
Let $\varphi:\Lambda\to\Gamma$ be a regular covering map of graphs.
For a vertex $v \in \Gamma$, let $C=\bigsqcup_{i=1}^n B_i$ be a union of components of $\Gamma \setminus \st(v)$. Then, the partial conjugation $P_C^v$ is liftable if $\bar{C} = C$.
\end{corollary}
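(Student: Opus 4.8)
The plan is to deduce this from Proposition~\ref{prop:partial_lift} by decomposing $P_C^v$ into a product of partial conjugations of the form $P_{\bar{B}}^v$, each of which is already known to be liftable. The structural inputs I would use are the two facts recorded just after the definition of $\bar{B}$: that each $\bar{B}$ is itself a union of elements of $\mathcal{C}(v)$ containing $B$, and that for any $B_1, B_2 \in \mathcal{C}(v)$ one has either $\bar{B_1} = \bar{B_2}$ or $\bar{B_1} \cap \bar{B_2} = \emptyset$.

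First I would extract from the list $\bar{B_1}, \dots, \bar{B_n}$ a family of pairwise distinct representatives $\bar{B}_{i_1}, \dots, \bar{B}_{i_m}$. By the dichotomy above these are pairwise disjoint, and since $B_i \subseteq \bar{B_i}$ for every $i$, we have
\[
C = \bigsqcup_{i=1}^n B_i \subseteq \bigcup_{i=1}^n \bar{B_i} = \bar{C} = C,
\]
so the hypothesis $\bar{C} = C$ forces $\bar{B}_{i_1}, \dots, \bar{B}_{i_m}$ to partition $C$ into pairwise disjoint unions of components of $\Gamma \setminus \st(v)$.

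Next I would factor the partial conjugation accordingly. Because each $\bar{B}_{i_j}$ is a union of components of $\Gamma \setminus \st(v)$, because these pieces are pairwise disjoint, and because their union is exactly $C$, each vertex of $C$ lies in precisely one $\bar{B}_{i_j}$ and each vertex outside $C$ is fixed by every factor; hence the $P_{\bar{B}_{i_j}}^v$ commute and
\[
P_C^v = P_{\bar{B}_{i_1}}^v \cdots P_{\bar{B}_{i_m}}^v .
\]
By Proposition~\ref{prop:partial_lift}, each factor $P_{\bar{B}_{i_j}}^v$ is liftable. Finally, liftability is preserved under composition: if $F$ lifts $f$ and $G$ lifts $g$, then $(fg)\phi = f(\phi G) = (\phi F)G = \phi(FG)$, so $FG$ lifts $fg$ (this is exactly the statement, used in the introduction, that $\LAut(\varphi)$ is a subgroup of $\Aut(A_\Gamma)$). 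Applying this to the factorization above shows that $P_C^v$ is liftable.

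I do not expect a genuine obstacle, since all the substance is already carried by Proposition~\ref{prop:partial_lift}; the only points requiring care are the bookkeeping that the distinct $\bar{B}_{i_j}$ genuinely \emph{partition} $C$ (which uses both $B_i \subseteq \bar{B_i}$ and the hypothesis $\bar{C} = C$), and the verification that the factorization of $P_C^v$ is an \emph{exact} identity of automorphisms rather than one holding only up to an inner automorphism.
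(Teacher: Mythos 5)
Your proof is correct and is essentially the paper's own argument: the paper likewise uses $\bar{C}=C$ to rewrite $\bigsqcup_{i=1}^n B_i = \bigcup_{i=1}^n \bar{B_i} = \bigsqcup_{i=1}^k \bar{B_i}$ with the distinct (hence pairwise disjoint) $\bar{B_i}$'s, factors $P_C^v$ as the product of the $P_{\bar{B_i}}^v$'s, and invokes Proposition~\ref{prop:partial_lift} together with the fact that $\LAut(\varphi)$ is a subgroup of $\Aut(A_\Gamma)$. The additional bookkeeping you spell out (that the distinct $\bar{B}_{i_j}$'s genuinely partition $C$, and that the factorization is an exact identity of automorphisms) is left implicit in the paper but is exactly the right justification.
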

\begin{proof}
Since $\bar{C} = C$, we have 
$$\bigsqcup_{i=1}^n B_i = \bigcup_{i=1}^n \bar{B_i} = \bigsqcup_{i=1}^k \bar{B_i},$$ 
where in the last equation, $k \leq n$ is chosen in a minimal way, after possibly reordering the $B_i$'s.  
Then $P_C^v$ is the product of $P_{\bar{B_1}}^v, \cdots, P_{\bar{B_k}}^v$ where each $P_{\bar{B_i}}^v$ is liftable by Proposition~\ref{prop:partial_lift}. Therefore, $P_C^v$ is liftable.
\end{proof}

In Section \ref{sec:liftable_conjugating}, we show that the converse of Proposition~\ref{prop:sufficient_condition_partial} also holds (Proposition \ref{lift_partial_2}) if $\Gamma$ and $\Lambda$ have no isolated vertices.

\subsection{Examples}

\begin{figure}[ht]
\begin{center}
\begin{tikzpicture} 
\begin{scope}[shift={(0,-3)}]
\filldraw[black] (0,0) circle (3pt) node[anchor=west]{$v$}; 
\filldraw[black] (1,1.72) circle (3pt) node[anchor=east]{$v_1$}; 
\filldraw[black] (1,-1.72) circle (3pt) ; 
\filldraw[black] (-2,0) circle (3pt) node[anchor=east]{$v_3$}; 
\filldraw[black] (-3,1) circle (3pt) node[anchor=east]{$v_4$}; 
\filldraw[black] (-3,-1) circle (3pt) ; 
\filldraw[blue] (0.66,3.08) circle (3pt) node[anchor=east,black]{$v_2$}; 
\filldraw[blue] (2.36,2.08) circle (3pt) ; 
\filldraw[red] (0.66,-3.08) circle (3pt) ; 
\filldraw[red] (2.36,-2.08) circle (3pt) ; 
\draw (0,0)--(1,1.72)--(0.66,3.08)--(2.36,2.08) -- (1,1.72);
\draw (0,0)--(1,-1.72)--(0.66,-3.08)--(2.36,-2.08) -- (1,-1.72);
\draw (0,0)--(-2,0)--(-3,1)--(-3,-1) -- (-2,0);
\end{scope}

\filldraw[black] (-3,6) circle (3pt) node[anchor=west]{$u$}; 
\filldraw[black] (-5,6) circle (3pt) node[anchor=east]{$u_3$};
\filldraw[black] (-6,7) circle (3pt) node[anchor=east]{$u_4$}; 
\filldraw[black] (-6,5) circle (3pt) ; 

\filldraw[black] (-1.5,7.5) circle (3pt) node[anchor=west]{$u_1$}; 
\filldraw[black] (-1.5,4.5) circle (3pt) ; 

\filldraw[blue] (-0.5,8.5) circle (3pt) node[anchor=east,black]{$u_2$}; 
\filldraw[blue] (-0.5,6.5) circle (3pt) ; 

\filldraw[red] (-0.5,5.5) circle (3pt) ;
\filldraw[red] (-0.5,3.5) circle (3pt) ;

\filldraw[blue] (0.5,8.5) circle (3pt) ; 
\filldraw[blue] (0.5,6.5) circle (3pt) node[anchor=west,black]{$u'_2$};

\filldraw[red] (0.5,5.5) circle (3pt) ; 
\filldraw[red] (0.5,3.5) circle (3pt) ;

\filldraw[black] (1.5,7.5) circle (3pt) node[anchor=west]{$u'_1$}; 
\filldraw[black] (1.5,4.5) circle (3pt) ; 

\filldraw[black] (3,6) circle (3pt) node[anchor=east]{$u'$}; 
\filldraw[black] (5,6) circle (3pt) node[anchor=east]{$u'_3$};;
\filldraw[black] (6,7) circle (3pt) node[anchor=east]{$u'_4$};; 
\filldraw[black] (6,5) circle (3pt) ; 

\draw (-3,6)--(-5,6)--(-6,7)--(-6,5) -- (-5,6);
\draw (3,6)--(5,6)--(6,7)--(6,5) -- (5,6);

\draw (-3,6)--(-1.5,7.5)--(-0.5,8.5)--(0.5,8.5) -- (1.5,7.5) -- (3,6);
\draw (-3,6)--(-1.5,4.5)--(-0.5,3.5)--(0.5,3.5) -- (1.5,4.5) -- (3,6);
\draw (-1.5,7.5)--(-0.5,6.5)--(0.5,6.5)-- (1.5,7.5);
\draw (-1.5,4.5)--(-0.5,5.5)--(0.5,5.5)-- (1.5,4.5);

\draw[-{Stealth[scale=1.5]}] (0,2.5) -- (0,0.6) node[midway, right] {$\varphi$};
\end{tikzpicture}
\caption{The partial conjugation $P_{C}^v$, where $C$ is the subgraph induced by the two blue vertices, is not liftable. }
\label{figure:NotLift}
\end{center}
\end{figure}
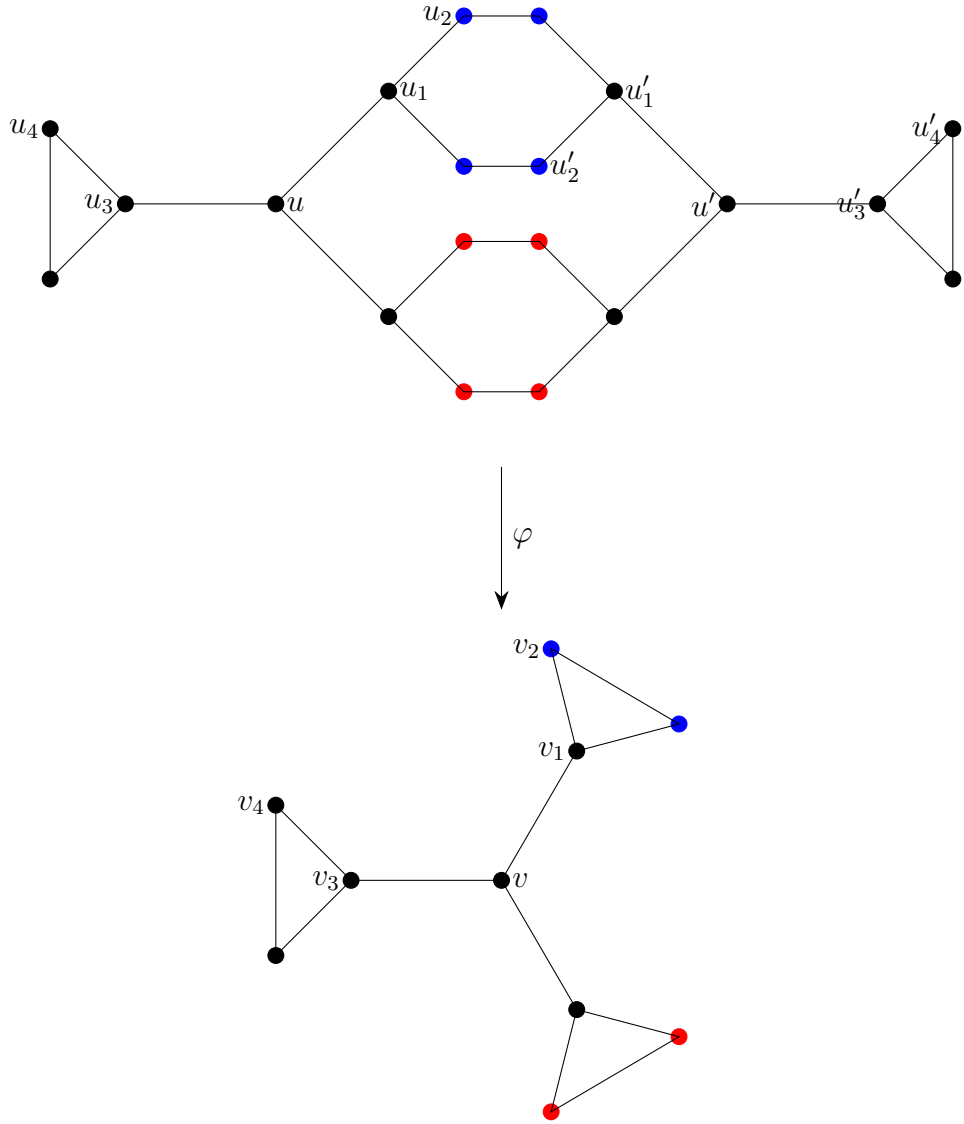
Before we conclude this section, we give an explicit example of transvections and partial conjugations that do not have lifts. Consider the regular covering map $\varphi:\Lambda\rightarrow\Gamma$ of graphs in Figure \ref{figure:NotLift}. 

\noindent\textbf{Transvections.} Consider two vertices $u_3$ and $u'_3$ which are in $\varphi^{-1}(v_3)$, and two vertices $u_4$ and $u'_4$ which are in $\varphi^{-1}(v_4)$.
Since $u_4\lesssim u_3$ and $u'_4\lesssim u'_3$, we have $v_4\lesssim_\varphi v_3$.
By Lemma \ref{lem:lift_transvection}, the transvection $T_{v_4}^{v_3}$ is liftable.
For two vertices $v_1,v_2\in\Gamma$, however, the definition of $\lesssim_\varphi$ says that $v_1$ and $v_2$ are not comparable under $\lesssim_\varphi$ even though $v_2\lesssim v_1$. It turns out that the transvection $T_{v_2}^{v_1}$ is not liftable by Corollary \ref{lem:lift_transvection2}.

\noindent\textbf{Partial conjugations.} Let $B_1$ and $B_2$ be the subgraphs of $\Gamma$ induced by the two blue vertices and the two red vertices, respectively. Let $C = B_1 \sqcup B_2$
Then we have $$\bar{B_1} = \bar{B_2} = \bar{C}= B_1 \sqcup B_2.$$
By Corollary \ref{prop:sufficient_condition_partial}, the partial conjugation $P_{C}^v=P_{B_1}^v P_{B_2}^v$ is liftable. However, neither $P_{B_1}^v$ nor $P_{B_2}^v$ are liftable, as proved later by Proposition \ref{lift_partial_2}.

%
%
%
%

\section{Liftable graph symmetries} \label{sec:lift_isom}
The goal of this section is to find a criterion for graph symmetries of $\Gamma$ to be liftable. We start with an easy example of liftable graph symmetries; permutations on a $\varphi$-equivalence classes.

\begin{lemma} \label{lem:permutataion_liftable}
If $[v]_{\varphi}$ is the $\varphi$-equivalence class for a vertex $v\in\Gamma$ and $\gamma_v$ is a permutation of $[v]_{\varphi}$, then the map $$\gamma(w) = \begin{cases} \gamma_v(w) & \text{if }w\in [v]_{\varphi}, \\ w, & \text{if }w \in V\Gamma \setminus [v]_{\varphi} \end{cases}$$ is a liftable graph symmetry of $\Gamma$.
\end{lemma}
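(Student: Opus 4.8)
The statement has two parts: that $\gamma$ is a graph symmetry of $\Gamma$, and that it is liftable. For the second part, the plan is to construct a graph symmetry $\tilde\gamma$ of $\Lambda$ satisfying $\varphi\circ\tilde\gamma=\gamma\circ\varphi$; the automorphism $F\in\Aut(A_\Lambda)$ induced by $\tilde\gamma$ then satisfies $\phi\circ F=\gamma\circ\phi$ on each generator $u\in V\Lambda$, since $\phi(F(u))=\varphi(\tilde\gamma(u))=\gamma(\varphi(u))$, so $\gamma$ is liftable. Thus everything reduces to combinatorics of the covering.

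To see that $\gamma$ is a graph symmetry, I would first invoke Lemma~\ref{EquivalentClass} to note that $[v]_\varphi$ induces a complete or an edgeless subgraph, so any permutation $\gamma_v$ preserves the edges inside $[v]_\varphi$. It remains to check edges between $[v]_\varphi$ and its complement. For $a,a'\in[v]_\varphi$ and $b\notin[v]_\varphi$, since $a$ and $a'$ lie in a common link-star class $[v]$, Lemma~\ref{lemma:adjacent} gives $a\sim b\iff a'\sim b$ (using $a'\neq b$); hence the external adjacency pattern is constant on $[v]_\varphi$ and is preserved by $\gamma_v$. Therefore $\gamma\in\Aut(\Gamma)$.

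For the lift, assume first that the vertices of $[v]_\varphi$ are non-isolated. For $\varphi$-equivalent $w,w'\in[v]_\varphi$ and each $u\in\varphi^{-1}(w)$, the relation $w\lesssim_\varphi w'$ provides some $u'\in\varphi^{-1}(w')$ with $\lk(u)\subseteq\st(u')$; I would show this $u'$ is unique and define $\beta_{w,w'}(u)=u'$. Uniqueness is where the structure of the class enters: by Lemma~\ref{EquivalentClass} and Lemma~\ref{lem:cover-preserving} the class $[v]$ is either complete, in which case $u'$ must be the unique neighbour of $u$ lying in $\varphi^{-1}(w')$, or edgeless, in which case $\lk(u)=\lk(u')$; in either case two distinct candidates would share a neighbour (or coincide), contradicting the covering property that distinct vertices of a common fiber are at distance at least three (Section~\ref{subsec:simplicial}). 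Symmetry of $\sim_\varphi$ shows each $\beta_{w,w'}$ is a bijection with inverse $\beta_{w',w}$. I then set $\tilde\gamma$ equal to $\beta_{w,\gamma_v(w)}$ on each fiber $\varphi^{-1}(w)$ with $w\in[v]_\varphi$ and equal to the identity elsewhere; since $\gamma_v$ is a permutation, $\tilde\gamma$ is a bijection of $V\Lambda$ with $\varphi\circ\tilde\gamma=\gamma\circ\varphi$.

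The main obstacle is verifying that $\tilde\gamma$ is adjacency-preserving. For an edge $\{p,q\}$ with at most one endpoint over $[v]_\varphi$ the verification is short: if $p\in\varphi^{-1}(w)$ and $q\notin\varphi^{-1}([v]_\varphi)$, then $q\in\lk(p)\subseteq\st(\tilde\gamma(p))$ and $q\neq\tilde\gamma(p)$, so $\{\tilde\gamma(p),q\}$ is an edge. The delicate case is when both $p$ and $q$ lie over $[v]_\varphi$: then their images are adjacent in $\Gamma$, so by Lemma~\ref{EquivalentClass} the class is complete, and one must chase the containments $\lk(p)\subseteq\st(\tilde\gamma(p))$ and $\lk(q)\subseteq\st(\tilde\gamma(q))$ together with the distinctness of fibers and the covering property to conclude $\tilde\gamma(p)\sim\tilde\gamma(q)$; the same argument applied to $\gamma_v^{-1}$ handles $\tilde\gamma^{-1}$, so $\tilde\gamma\in\Aut(\Lambda)$. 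Finally, when the vertices of $[v]_\varphi$ are isolated the uniqueness argument for $\beta_{w,w'}$ breaks down, but in that case $\varphi^{-1}([v]_\varphi)$ consists of isolated vertices of $\Lambda$, so any choice of fiber bijections produces an adjacency-preserving lift, and liftability follows as before.
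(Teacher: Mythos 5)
Your proposal is correct and takes essentially the same route as the paper: establish that $\gamma$ is a graph symmetry via the complete/edgeless dichotomy of Lemma~\ref{EquivalentClass} and Lemma~\ref{lemma:adjacent}, then lift $\gamma$ to a graph symmetry $\tilde\gamma$ of $\Lambda$ by matching fibers through the (unique, in the non-isolated case) link-star comparable vertices supplied by $\varphi$-equivalence; your fiberwise bijections $\beta_{w,w'}$ are just a repackaging of the paper's decomposition of $\varphi^{-1}([v]_\varphi)$ into sections mapping bijectively onto $[v]_\varphi$. If anything, your treatment is slightly more careful than the paper's, since you prove the uniqueness the paper merely asserts and you note that uniqueness genuinely fails for isolated vertices (where any choice of fiber bijections works), while your sketched "containment chase" for edges joining two fibers over $[v]_\varphi$ closes readily because $p$ and $\tilde\gamma(p)$ are link-star equivalent and Lemma~\ref{lemma:adjacent} applies.
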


\begin{proof}
By Lemma \ref{EquivalentClass}, the subgraph induced by $[v]_{\varphi}$ is complete or edgeless, and thus, $\gamma_v$ is a graph symmetry of the subgraph.
If a vertex $w$ in $V\Gamma \setminus [v]_{\varphi}$ is adjacent to a vertex in $[v]_{\varphi}$, then all the vertices of $[v]_{\varphi}$ are adjacent to $w$.
That is, $\gamma$ preserves the adjacency.
So $\gamma$ is a graph symmetry of $\Gamma$.

Let $[v]_{\varphi}=\{v_1,\cdots,v_n\}$ and let $u_1$ be a vertex in $\Lambda$ with $\varphi(u_1)=v_1$.
By the definition of $\varphi$-equivalence, for each $i>1$, there exists a unique vertex $u_i$ of $\Lambda$ such that $\varphi(u_i)=v_i$ and $u_i$ is equivalent to $u_1$.
It means that $\varphi^{-1}([v]_{\varphi})$ is a disjoint union of $V_1, \dots, V_k$ such that all the vertices of $V_j$ are equivalent and the restriction of $\varphi$ to $V_j$ is bijective onto $[v]_{\varphi}$ for $j=1,\cdots,k$.
Thus we can induce a graph symmetry $\tilde{\gamma}_j$ of $V_j$ so that $\varphi\circ\tilde{\gamma}_j(u) = \gamma\circ\varphi(u)$ for all $u \in V_j$.
Let 
$$\tilde{\gamma}(u) = \begin{cases} \tilde{\gamma}_i(u) & \text{if }u \in V_j, \\ u, & \text{otherwise}.\end{cases}$$
Then we have $\varphi\circ \tilde{\gamma}(u) = \varphi\circ \gamma(u)$ and therefore, $\tilde{\gamma}$ is a lift of $\gamma$.
\end{proof}

For a word $w = w_1^{n_1} \dots w_k^{n_k}$ in $A_\Lambda$ with $w_1,\cdots,w_k\in V\Lambda$, the cardinality of the support of $\phi(w)$ is $\leq k$ since every cancellation on $w$ may occur on $\phi(w)= \phi(w_1^{n_1} \dots w_k^{n_k}) = \varphi(w_1)^{n_1} \dots \varphi(w_k)^{n_k}$.
That is, for every word $w \in A_\Lambda$, we have $\supp \phi(w) \subseteq \varphi(\supp w)$. Indeed, we can say more:

\begin{lemma} \label{lem:cover_essupp}
For any reduced word $w \in A_\Lambda$, we have $\esupp \phi(w) \subseteq \varphi ( \esupp w )$.
\end{lemma}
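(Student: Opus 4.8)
The plan is to bootstrap from the already-observed inclusion $\supp\phi(w)\subseteq\varphi(\supp w)$, valid for every word $w\in A_\Lambda$, by exploiting the fact that both the essential support and the map $\phi$ interact well with conjugacy. First I would fix a cyclically reduced word $w_0\in A_\Lambda$ representing a conjugate of the element represented by $w$; such a word exists by definition, and since a cyclically reduced word is in particular reduced, its letter set is exactly $\esupp w$, so that $\supp w_0=\esupp w$.

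Next, writing $w = y\,w_0\,y^{-1}$ for some $y\in A_\Lambda$ and using that $\phi$ is a homomorphism, I would observe that $\phi(w)=\phi(y)\,\phi(w_0)\,\phi(y)^{-1}$ is conjugate to $\phi(w_0)$ in $A_\Gamma$. Because the essential support of an element of a RAAG depends only on its conjugacy class (cyclically reduced words are, by definition, minimal-length representatives among all conjugates), this yields $\esupp\phi(w)=\esupp\phi(w_0)$.

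The final step chains three facts: the standard inclusion $\esupp(g)\subseteq\supp(g)$ for any $g$ in a RAAG (cyclic reduction only deletes letters, so the essential support is always contained in the support) applied to $g=\phi(w_0)$; then the observed inclusion $\supp\phi(w_0)\subseteq\varphi(\supp w_0)$; and finally the equality $\supp w_0=\esupp w$ from the first step. Concatenating,
\[
\esupp\phi(w)=\esupp\phi(w_0)\subseteq\supp\phi(w_0)\subseteq\varphi(\supp w_0)=\varphi(\esupp w),
\]
which is the desired conclusion.

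The argument is essentially formal once the correct conjugate is chosen; the only ingredients beyond the excerpt are the conjugacy-invariance of $\esupp$ and the inclusion $\esupp\subseteq\supp$, both standard consequences of cyclic reduction in RAAGs. The point I would treat most carefully, and the only place an error could creep in, is the passage to $w_0$: one must ensure that $w_0$ is genuinely cyclically reduced (not merely reduced), so that $\supp w_0=\esupp w$, and that conjugacy in $A_\Lambda$ is faithfully transported to conjugacy in $A_\Gamma$ by $\phi$ before invoking the conjugacy-invariance of the essential support.
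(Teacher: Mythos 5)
Your proposal is correct and follows essentially the same route as the paper: the paper also passes to a cyclically reduced conjugate $x$ of $w$, notes that $\phi(w)$ is conjugate to $\phi(x)$, and chains $\esupp\phi(w)\subseteq\supp\phi(x)\subseteq\varphi(\supp x)=\varphi(\esupp w)$, using exactly the same ingredients (conjugacy-invariance of $\esupp$, the inclusion $\esupp\subseteq\supp$, and the elementary inclusion $\supp\phi(\cdot)\subseteq\varphi(\supp(\cdot))$). The only cosmetic difference is that the paper names a cyclically reduced word $y$ conjugate to $\phi(x)$ in $A_\Gamma$ where you invoke $\esupp\phi(w_0)\subseteq\supp\phi(w_0)$ directly.
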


\begin{proof}
Let $x \in A_\Lambda$ be a cyclically reduced word conjugate to $w$, and let $y \in A_\Gamma$ be a cyclically reduced word conjugate to $\phi(x)$.
Since $\phi(w)$ is conjugate to $y$, we have $\esupp \phi(w) = \supp y \subseteq \supp \phi(x) \subseteq \varphi (\supp x) = \varphi(\esupp w)$.
\end{proof}

The following lemma is the analog of Proposition \ref{prop:essupp_auto} for a regular covering map of graphs.

\begin{lemma} \label{lem:lift_essupp}
For a liftable automorphism $f$ of $A_\Gamma$, let $\sigma$ be a graph symmetry of $\Gamma$ such that $f\sigma$ is essential. For each vertex $v\in\Gamma$, then we have $\esupp f(v) \subseteq V\Gamma_{\gtrsim_\varphi \sigma^{-1}(v)}$.
\end{lemma}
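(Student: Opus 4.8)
The plan is to use a lift of $f$ to transport the essential-support structure up to the cover $\Lambda$, apply the cover-side version of Proposition~\ref{prop:essupp_auto} there, and then push the resulting link-star inequalities back down to $\Gamma$ via Lemma~\ref{lem:cover-preserving}, where the regularity of $\varphi$ is exactly what converts the plain order $\lesssim$ on $\Lambda$ into the order $\lesssim_\varphi$ on $\Gamma$. Note first that the plain inclusion $\esupp f(v)\subseteq V\Gamma_{\gtrsim\sigma^{-1}(v)}$ already follows from Proposition~\ref{prop:essupp_auto} for \emph{any} automorphism; the content of this lemma is the upgrade from $\gtrsim$ to $\gtrsim_\varphi$, and that upgrade is where liftability must enter.

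I would begin by unwinding the hypothesis on $\sigma$. Writing $v_0:=\sigma^{-1}(v)$, the assumption that $f\sigma$ is essential means $w\in\esupp(f\sigma)(w)$ for every $w$; taking $w=v_0$ gives $v_0\in\esupp f(v)$. Thus the vertex that I must exhibit as a $\lesssim_\varphi$-minimum of $\esupp f(v)$ is already known to belong to $\esupp f(v)$. Next, choosing a lift $F\in\Aut(A_\Lambda)$ of $f$ and a vertex $u\in\varphi^{-1}(v)$, the relation $f\circ\phi=\phi\circ F$ gives $\phi(F(u))=f(\phi(u))=f(v)$. Applying Lemma~\ref{lem:cover_essupp} to the reduced word $F(u)$ yields $\esupp f(v)\subseteq\varphi(\esupp F(u))$, so from $v_0\in\esupp f(v)$ I obtain a vertex $\hat u_0\in\esupp F(u)$ with $\varphi(\hat u_0)=v_0$.

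The key step is to check the star-size equality $|\st(u)|=|\st(\hat u_0)|$, which is precisely the hypothesis needed to invoke Proposition~\ref{prop:essupp_auto} for $F$. This is automatic: since $\varphi$ is a local isomorphism, $|\st(\hat u_0)|=|\st(v_0)|$ and $|\st(u)|=|\st(v)|$, while $\sigma$ being a graph symmetry gives $|\st(v_0)|=|\st(\sigma(v_0))|=|\st(v)|$. Hence Proposition~\ref{prop:essupp_auto}, applied to the automorphism $F$ of $A_\Lambda$, the vertex $u$, and $\hat u_0\in\esupp F(u)$, yields $\esupp F(u)\subseteq V\Lambda_{\gtrsim\hat u_0}$. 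Finally I would push this down: by Lemma~\ref{lem:cover-preserving}, $\varphi(V\Lambda_{\gtrsim\hat u_0})=V\Gamma_{\gtrsim_\varphi\varphi(\hat u_0)}=V\Gamma_{\gtrsim_\varphi v_0}$, so combining inclusions gives $\esupp f(v)\subseteq\varphi(\esupp F(u))\subseteq V\Gamma_{\gtrsim_\varphi v_0}=V\Gamma_{\gtrsim_\varphi\sigma^{-1}(v)}$, as desired.

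I do not anticipate a serious obstacle; the only points requiring care are confirming the star-size equality so that Proposition~\ref{prop:essupp_auto} applies on the cover, and recognizing that the $\varphi$-order (rather than the plain link-star order) is what appears at the end. The latter is exactly the regularity phenomenon already isolated in Lemma~\ref{lem:cover-preserving}: a single inequality $\hat u_0\lesssim u'$ in $\Lambda$ can be transported across the entire fiber $\varphi^{-1}(v_0)$ by deck transformations. Since that argument is encapsulated in Lemma~\ref{lem:cover-preserving}, I would simply cite it rather than reprove the transitivity-of-the-deck-action bookkeeping.
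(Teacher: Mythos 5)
Your proposal is correct and follows essentially the same route as the paper's proof: lift $f$, use Lemma~\ref{lem:cover_essupp} to find a vertex of $\varphi^{-1}(\sigma^{-1}(v))$ in $\esupp F(u)$, verify the star-size equality (the paper phrases it via $\rank(\tilde f(u))=|\st(u)|$, you via local isomorphism plus the graph symmetry, but the chain of equalities is identical), apply Proposition~\ref{prop:essupp_auto} in $\Lambda$, and push down with Lemma~\ref{lem:cover-preserving}. No gaps; your closing observation that regularity is what upgrades $\gtrsim$ to $\gtrsim_\varphi$ is exactly the point the paper delegates to Lemma~\ref{lem:cover-preserving}.
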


\begin{proof}
Let $\tilde{f}$ be a lift of $f$.
For each $u \in \varphi^{-1}(v)$, we have $\phi\circ \tilde{f}(u)=f\circ\phi (u)= f(v)$ so that by Lemma \ref{lem:cover_essupp}, the $\varphi$-image of $\esupp \tilde{f}(u)$ contains $\esupp f(v)$.
Since $\sigma^{-1}(v)$ is contained in $\esupp f(v)$, there is a vertex $u' \in \varphi^{-1}(\sigma^{-1}(v))$ which is contained in $\esupp \tilde{f}(u)$.
Note that $\rank(\tilde{f}(u))=|\st(u)|=|\st(v)|=|\st(\sigma^{-1}(v))|=|\st(u')|$.
By Proposition \ref{prop:essupp_auto}, then, we have $\esupp \tilde{f}(u) \subseteq V\Lambda_{\gtrsim u'}$. 
So by Lemma \ref{lem:cover-preserving}, we obtain the following inclusion:
$$\esupp f(v) \subseteq \varphi(\esupp \tilde{f}(u)) \subseteq \varphi(V\Lambda_{\gtrsim u'}) = V\Gamma_{\gtrsim_\varphi \sigma^{-1}(v)}.$$
Therefore, the lemma holds.
\end{proof}

Recall that an automorphism $F$ of $A_\Lambda$ is said to be \textit{fiber-preserving} if it is a lift of an automorphism $f$ of $A_\Gamma$, and the set of all fiber-preserving automorphisms of $A_\Lambda$ forms a subgroup of $\Aut(A_\Lambda)$, denoted by $\FAut(\varphi)$. 
By applying Theorem~\ref{thm:iso_thm} to both $F$ and $f$, we can find a sufficient condition for a graph symmetry to be liftable.

\begin{theorem}\label{thm:lift_iso}
For a regular covering map $\varphi: \Lambda \to \Gamma$ of graphs, let $f\in \LAut(\varphi)$ be a liftable automorphism with its lift $F\in\FAut(\varphi)$.
Then any graph symmetry $\sigma$ which makes $f\sigma^{-1}$ essential admits a lift $\mu\in\FAut(\varphi)$ which is a graph symmetry of $\Lambda$ such that $\varphi\circ\mu=\sigma\circ\varphi$. 
Moreover, if $\Gamma$ has no isolated vertex, then such $\mu$ makes $F \mu^{-1}$ essential.
\end{theorem}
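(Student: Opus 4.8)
The plan is to build the lift $\mu$ vertex by vertex out of the essential supports of $F$, and then to verify it is a graph automorphism by comparing it with a symmetry produced by Theorem~\ref{thm:iso_thm}. First I would record what the hypotheses give. Since $f\sigma^{-1}$ is essential, $\sigma(v)\in\esupp f(v)$ for every $v\in V\Gamma$ (and, applying Lemma~\ref{lem:lift_essupp} to $\sigma^{-1}$, one even has $\esupp f(v)\subseteq V\Gamma_{\gtrsim_\varphi\sigma(v)}$, which is where the suborder $\lesssim_\varphi$ enters). Fix a non-isolated $v$ and $u\in\varphi^{-1}(v)$. Applying Lemma~\ref{lem:cover_essupp} to the reduced word $F(u)$, and using $\phi F(u)=f(v)$, gives $\esupp f(v)\subseteq\varphi(\esupp F(u))$, so there is a vertex $u''\in\esupp F(u)$ with $\varphi(u'')=\sigma(v)$. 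As $F$ is an automorphism, $\rank F(u)=|\st(u)|=|\st(v)|=|\st(\sigma(v))|=|\st(u'')|$, so Lemma~\ref{lem:rank} forces $u''$ to be minimal in $\esupp F(u)$ with $\esupp F(u)\subseteq V\Lambda_{\gtrsim u''}$. Any two such minimal vertices over $\sigma(v)$ are equivalent by Proposition~\ref{prop:essupp_auto}, non-isolated, and lie in one fiber, hence are equal by Lemma~\ref{lem:simplicial_criterion}; thus I may set $\mu(u):=u''$ unambiguously. For the finitely many isolated vertices $v$ I would instead choose an arbitrary bijection $\varphi^{-1}(v)\to\varphi^{-1}(\sigma(v))$ (the fibers have equal size since $\varphi$ is regular), which is harmless as isolated vertices carry no adjacency.

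By construction $\varphi\circ\mu=\sigma\circ\varphi$, so once $\mu$ is shown to be a graph automorphism it is a lift of $\sigma$ and lies in $\FAut(\varphi)$. To prove this I would invoke Theorem~\ref{thm:iso_thm} for $F$ to get a graph symmetry $\tau$ of $\Lambda$ with $\tau(u)\in\esupp F(u)$; the identical rank computation makes $\tau(u)$ minimal, whence $\tau(u)\sim\mu(u)$ for every non-isolated $u$. Since $\tau$ preserves the link–star order, it maps each equivalence class $[w]$ bijectively onto $[\tau(w)]$, and from $\mu(u)\sim\tau(u)\sim\tau(w)$ (for $u\in[w]$) one gets $\mu([w])\subseteq[\tau(w)]$. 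The crucial point is injectivity of $\mu$ on $[w]$: if distinct $u_1,u_2\in[w]$ had $\mu(u_1)=\mu(u_2)$, then $\sigma\varphi(u_1)=\varphi\mu(u_1)=\varphi\mu(u_2)=\sigma\varphi(u_2)$ would place $u_1,u_2$ in a common fiber, and being distinct, equivalent and non-isolated they are excluded by Lemma~\ref{lem:simplicial_criterion}. As $\tau$ preserves the cardinality of equivalence classes, $\mu$ is then a bijection on each class, hence on $V\Lambda$. Finally, for an edge $\{u_1,u_2\}$ the classes $[\tau(u_1)],[\tau(u_2)]$ are adjacent, so Lemma~\ref{lemma:adjacent} together with $\mu(u_1)\neq\mu(u_2)$ shows $\{\mu(u_1),\mu(u_2)\}$ is an edge; a finite edge-count then promotes the vertex-bijection $\mu$ to a graph automorphism.

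The ``moreover'' clause is then immediate from the construction. When $\Gamma$ has no isolated vertex neither does $\Lambda$ (isolated vertices map to isolated vertices), so there is no arbitrary choice: every $\mu(u)$ was defined as an element of $\esupp F(u)$. Hence $\mu(w)\in\esupp F(w)$ for all $w$, equivalently $u\in\esupp F(\mu^{-1}(u))=\esupp(F\mu^{-1})(u)$ for all $u$, so $F\mu^{-1}$ is essential.

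I expect the main obstacle to be exactly the fiber control in the second paragraph, namely showing that the fiberwise recipe produces a single \emph{bijective, adjacency-preserving} map rather than a mere assignment. This is where the standing hypotheses do their work: Proposition~\ref{prop:essupp_auto} and the rank lemma pin down the candidate vertex up to equivalence, and Lemma~\ref{lem:simplicial_criterion} (whose conclusion fails precisely for isolated vertices) converts ``equivalent and in a common fiber'' into ``equal'', delivering both well-definedness and injectivity; this is also the precise reason the essentiality of $F\mu^{-1}$ requires the absence of isolated vertices.
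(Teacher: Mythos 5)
Your proposal is correct and follows essentially the same route as the paper's proof: both extract $\mu(u)$ as the unique vertex of $\esupp F(u)$ lying over $\sigma(\varphi(u))$ (existence via Lemma~\ref{lem:cover_essupp} and essentiality of $f\sigma^{-1}$, uniqueness via the rank/minimality argument and Lemma~\ref{lem:simplicial_criterion}), compare it with the symmetry $\tau$ supplied by Theorem~\ref{thm:iso_thm} to get bijectivity and adjacency-preservation via Lemma~\ref{lemma:adjacent}, treat isolated vertices by a separate fiberwise bijection, and read off essentiality of $F\mu^{-1}$ from $\mu(u)\in\esupp F(u)$. The differences are cosmetic: you pin down uniqueness directly by minimality inside $\esupp F(u)$ where the paper intersects $[\tau(u)]$ with the fiber, and you organize injectivity class-by-class rather than by the paper's two-case analysis according to whether $\varphi(u_1)=\varphi(u_2)$.
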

\begin{proof}
By Theorem \ref{thm:iso_thm}, there is a graph symmetry $\tau$ of $\Lambda$ such that $F\tau^{-1}$ is essential.
For any vertex $u\in\Lambda$, since $|\st(\tau(u))|=|\st(u)|$, by Proposition \ref{prop:essupp_auto}, we have $$\tau(u) \in \esupp F(u) \subseteq V\Lambda_{\gtrsim \tau(u)}.$$
By Lemma \ref{lem:lift_essupp}, we have $\sigma(\varphi(u))\in\esupp f(\varphi(u)) \subseteq V\Gamma_{\gtrsim_\varphi \sigma(\varphi(u))}$.
Since Lemma \ref{lem:cover_essupp} implies $\esupp f(\varphi(u)) \subseteq \varphi ( \esupp F(u))$, $\esupp F(u)$ must contain at least one vertex $u'$ of $\varphi^{-1}(\sigma(\varphi(u)))$ such that $\tau(u) \lesssim u'$.
But the fact that $$\deg \tau(u) = \deg u =\deg \varphi(u) =\deg \sigma(\varphi(u)) = \deg u'$$
implies that $u'$ is equivalent to $\tau(u)$.

Let $\Lambda^i$ be the set of isolated vertices of $\Lambda$ and $\Lambda^c$ the complement of $\Lambda^i$ in $\Lambda$.
Note that any graph symmetry preserves the set of isolated vertices and $\varphi$ maps the set of isolated vertices in $\Lambda$ to the set of isolated vertices in $\Gamma$.
It means that the restriction $\sigma_i$ of $\sigma$ to $\varphi(\Lambda^i)$ always has a graph symmetry $\mu_i$ of $\Lambda^i$ such that $\varphi\circ\mu_i=\sigma_i\circ\varphi$.
Hence, we will show that the restriction $\sigma_c$ of $\sigma$ to $\varphi(\Lambda^c)$ has a graph symmetry $\mu_c$ of $\Lambda^c$ such that  $\varphi\circ\mu_c=\sigma_c\circ\varphi$. 

By Lemma \ref{lem:simplicial_criterion}, if $u\in\Lambda^c$, then the vertex $u'$ obtained from $u$ in the first paragraph is unique since the set
\begin{equation}\label{eqn:unique}
    [\tau(u)] \cap \varphi^{-1}(\sigma(\varphi(u))) \tag{$*$}
\end{equation}
contains at most one vertex, and thus we have a map $\mu_c:V\Lambda^c\rightarrow V\Lambda^c$ sending $u$ to $u'$ (in particular, $\esupp F(u)$ contains $\mu_c(u)$).
Let $u_1,u_2\in V\Lambda^c$ be two distinct vertices. 
If $\varphi(u_1)=\varphi(u_2)$, then $$d(u_1,u_2)=d(\tau(u_1),\tau(u_2))\geq 3$$ so that $[\tau(u_1)]\cap[\tau(u_2)]=\emptyset$.
Otherwise, $\varphi^{-1}(\sigma(\varphi(u_1)))\cap \varphi^{-1}(\sigma(\varphi(u_2)))=\emptyset$.
Thus, we have $\mu_c(u_1)\neq\mu_c(u_2)$, i.e.,  
$$\left([\tau(u_1)] \cap \varphi^{-1}(\sigma(\varphi(u_1)))\right)\cap \left([\tau(u_2)] \cap \varphi^{-1}(\sigma(\varphi(u_2)))\right)=\emptyset,$$
which implies that $\mu_c$ is bijective.
If $u_1$ and $u_2$ are joined by an edge, by Lemma~\ref{lemma:adjacent}, $\mu_c(u_1)$ and $\mu_c(u_2)$ must be adjacent since $\tau(u_1)$ is adjacent to $\tau(u_2)$ and we have $\mu_c(u_i)\in[\tau(u_i)]$ and $\mu_c(u_1)\neq\mu_c(u_2)$.
Hence, $\mu_c$ is the desired graph symmetries of $\Lambda^c$.

For the `moreover' part, the assumption that $\Gamma$ has no isolated vertex implies that $\Lambda^c$ is equal to $\Lambda$ and the graph symmetry $\mu_c$ of $\Lambda^c$ obtained in the previous paragraph becomes the desired graph symmetry $\mu$ of $\Lambda$.
\end{proof}

Indeed, the above theorem implies that a graph symmetry, considered as an element of $\Aut(A_\Gamma)$, is liftable if and only if it can be lifted through the covering map $\varphi$.

\begin{corollary}\label{cor:lift_iso}
Let $\varphi: \Lambda \to \Gamma$ be a regular covering map of graphs and let $\sigma$ be a graph symmetry of $\Gamma$. 
Then $\sigma$ is in $\LAut(\varphi)$ if and only if there is a graph symmetry $\mu$ of $\Lambda$ such that $\varphi\circ\mu=\sigma\circ\varphi$.
\end{corollary}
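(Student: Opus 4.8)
The plan is to prove the two implications separately, observing that the forward direction is essentially a repackaging of Theorem~\ref{thm:lift_iso} applied to the trivial choice, while the reverse direction is a routine verification. The heavy lifting has already been done in Theorem~\ref{thm:lift_iso}, so the corollary should follow with almost no extra work.

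For the reverse implication, suppose a graph symmetry $\mu$ of $\Lambda$ with $\varphi\circ\mu=\sigma\circ\varphi$ is given. I would extend $\mu$ to the automorphism of $A_\Lambda$ that permutes the standard generators according to $\mu$, and then compare the two homomorphisms $\phi\circ\mu$ and $\sigma\circ\phi$ from $A_\Lambda$ to $A_\Gamma$. On a standard generator $u\in V\Lambda$ one computes $\phi\circ\mu(u)=\varphi(\mu(u))=\sigma(\varphi(u))=\sigma\circ\phi(u)$, so the two homomorphisms agree on a generating set and therefore coincide. Hence $\mu$ is a lift of $\sigma$, which exhibits $\sigma\in\LAut(\varphi)$.

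For the forward implication, assume $\sigma\in\LAut(\varphi)$ and fix any lift $F\in\FAut(\varphi)$ (which exists precisely because $\sigma$ is liftable). The key observation is that $\sigma$ is itself a graph symmetry of $\Gamma$, so I can apply Theorem~\ref{thm:lift_iso} with $f=\sigma$ and take $\sigma$ itself as the graph symmetry required by that theorem: then $f\sigma^{-1}=\sigma\sigma^{-1}=\mathrm{id}_{A_\Gamma}$, and the identity is essential since $\esupp(v)=\{v\}\ni v$ for every $v\in V\Gamma$. Theorem~\ref{thm:lift_iso} then directly yields a graph symmetry $\mu$ of $\Lambda$ with $\varphi\circ\mu=\sigma\circ\varphi$, which is exactly the conclusion sought.

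The main point to watch is bookkeeping rather than a genuine difficulty: one must confirm that the identity automorphism meets the definition of ``essential'' (immediate) and, more importantly, that the piece of Theorem~\ref{thm:lift_iso} being invoked---the existence of the graph symmetry $\mu$ with $\varphi\circ\mu=\sigma\circ\varphi$---is the conclusion that holds for every regular covering map, the no-isolated-vertex hypothesis being needed only for the ``moreover'' clause about $F\mu^{-1}$ being essential. Since the corollary is stated for arbitrary regular covering maps, this distinction is what one must verify, but it requires nothing beyond reading off the statement of Theorem~\ref{thm:lift_iso}.
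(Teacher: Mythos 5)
Your proof is correct and takes essentially the same route as the paper: the paper likewise treats the ``if'' direction as obvious (your generator computation is exactly what is meant) and proves the ``only if'' direction by applying Theorem~\ref{thm:lift_iso} with $f$ replaced by $\sigma$, using $\sigma$ itself as the graph symmetry that makes $f\sigma^{-1}=\mathrm{id}$ essential. Your added care in checking that the identity is essential and that the no-isolated-vertex hypothesis enters only in the ``moreover'' clause of Theorem~\ref{thm:lift_iso} is a correct reading of that theorem and fully justifies stating the corollary for arbitrary regular covering maps.
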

\begin{proof}
The `if' direction is obvious and the other direction can be proven by replacing $f$ in Theorem~\ref{thm:lift_iso} by $\sigma$.
\end{proof}

\begin{corollary} \label{lem:deck_lift}
Let $\varphi:\Lambda\rightarrow\Gamma$ be a regular covering map of graphs without isolated vertices.
Let $f\in\LAut(\varphi)$ be an essential automorphism. For any lift $F$ of $f$, then, there exists a deck-transformation $\mu$ such that $F\mu$ is an essential lift of $f$.
\end{corollary}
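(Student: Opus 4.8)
The plan is to deduce this directly from Theorem~\ref{thm:lift_iso}, since that theorem already carries all the substantive content. The starting observation is that, because $f$ is \emph{essential}, the identity graph symmetry $\sigma = \mathrm{id}_\Gamma$ already makes $f\sigma^{-1} = f$ essential. So I would apply Theorem~\ref{thm:lift_iso} to the liftable automorphism $f$, its given lift $F\in\FAut(\varphi)$, and this choice $\sigma = \mathrm{id}_\Gamma$. The theorem then produces a graph symmetry $\mu_0$ of $\Lambda$ lying in $\FAut(\varphi)$ and satisfying $\varphi\circ\mu_0 = \sigma\circ\varphi = \varphi$.

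The key point is then purely definitional: a graph symmetry $\mu_0$ of $\Lambda$ with $\varphi\circ\mu_0 = \varphi$ is exactly a deck transformation of the covering $\varphi$, so $\mu_0\in\Deck(\varphi)$. As $\Deck(\varphi)$ is a group, its inverse $\mu := \mu_0^{-1}$ is again a deck transformation. I would next verify that $F\mu$ is again a lift of $f$: extending $\mu$ to $A_\Lambda$, the relation $\varphi\circ\mu = \varphi$ on vertices gives $\phi\circ\mu = \phi$, whence $\phi\circ(F\mu) = f\circ\phi\circ\mu = f\circ\phi$, so that $F\mu$ indeed lifts $f$.

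It remains to see that $\mu$ can be chosen so that $F\mu$ is essential, and this is precisely where the no-isolated-vertex hypothesis enters. Since $\Gamma$ has no isolated vertex, the ``moreover'' clause of Theorem~\ref{thm:lift_iso} guarantees that the graph symmetry $\mu_0$ it produced makes $F\mu_0^{-1} = F\mu$ essential. Combining the previous paragraphs, $\mu = \mu_0^{-1}$ is the desired deck transformation with $F\mu$ an essential lift of $f$.

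There is essentially no genuine obstacle here beyond bookkeeping, and I would flag this honestly: the hard analytic work is entirely inside Theorem~\ref{thm:lift_iso}, and the only things to check are (i) that the graph symmetry it returns is genuinely an element of $\Deck(\varphi)$, which is immediate from $\varphi\circ\mu_0 = \varphi$, and (ii) the minor sign bookkeeping of passing to $\mu_0^{-1}$ to convert the theorem's statement about $F\mu_0^{-1}$ into the statement's $F\mu$. The only conceptual prerequisite one must not lose sight of is that the absence of isolated vertices is exactly the hypothesis licensing the ``moreover'' part; without it one controls only $F$ on the non-isolated part $\Lambda^c$.
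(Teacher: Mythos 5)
Your proposal is correct and follows the paper's own proof essentially verbatim: the paper likewise applies Theorem~\ref{thm:lift_iso} with $\sigma$ the identity graph symmetry (legitimate since $f$ is essential), observes that the resulting graph symmetry of $\Lambda$ is a deck transformation because $\varphi\circ\mu=\sigma\circ\varphi=\varphi$, and invokes the ``moreover'' clause to get essentiality. If anything, your bookkeeping with $\mu=\mu_0^{-1}$ and the explicit check that $F\mu$ still lifts $f$ is slightly more careful than the paper's two-line argument.
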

\begin{proof}
Since $f$ is essential, we can choose $\sigma$ as the identity graph symmetry in Theorem~\ref{thm:lift_iso}. Then the graph symmetry $\mu$ of $\Lambda$ constructed from $\sigma$ and $F$ is a deck transformation since $\varphi\circ\mu=\sigma\circ\varphi$.
\end{proof}

In the proof of Theorem~\ref{thm:lift_iso}, if $\Gamma^i$ has only one isolated vertex, then the restriction $\mu_i$ of $\mu$ to $\Lambda^i$ can be chosen as the restriction of $\tau$ to $\Lambda^i$ so that the last statement of the theorem holds.
However, if $\Gamma^i$ has at least two isolated vertices, then the statement may not hold since the set (\ref{eqn:unique}) in the proof is no longer unique.

\begin{example}\label{Ex:FreeGroupGraphSymmetry}
Let $\Gamma=\{a,b\}$ be a graph consisting of two isolated vertices.
There are two graph symmetries $\sigma_1$ and $\sigma_2$ of $\Gamma$ where $\sigma_1$ is the identity and $\sigma_2$ is to exchange two vertices.
Let $f$ be an automorphism of $A_\Gamma$ sending $a$ and $b$ to $ab$ and $a^2b$, respectively. Then both $\sigma_1$ and $\sigma_2$ satisfy Theorem~\ref{thm:iso_thm}.

Let $\Lambda=\{a_1,a_2,b_1,b_2\}$ be a regular covering of $\Gamma$ with covering map sending $a_i$ to $a$ and $b_i$ to $b$ for $i=1,2$.
Let $F$ be an automorphism of $\Lambda$ sending $a_1$, $a_2$, $b_1$ and $b_2$ to $a_1b_1$, $a_1b_2$, $a_2a_1b_1$ and $a_2^2b_1$ (it can easily be seen that $\{a_1b_1, a_1b_2, a_2a_1b_1, a_2^2b_1\}$ is a basis of $A_\Lambda$ so that $F$ is an automorphism).
Even though there exists a lift of $\sigma_2$ such that Theorem~\ref{thm:iso_thm} holds, any lift of $\sigma_1$ does not satisfy Theorem~\ref{thm:iso_thm}.
\end{example}

%
%
%
%

\section{Liftable conjugating automorphisms}\label{sec:liftable_conjugating}

Recall that a conjugating automorphism is an automorphism sending each generator to its conjugate and that the group consisting of all conjugating automorphisms of a RAAG is in fact the subgroup generated by partial conjugations.
In this section, we show that for a regular covering map $\varphi \colon \Lambda \to \Gamma$ of graphs, the group of liftable conjugating automorphisms of $A_\Gamma$, denoted by $\LPAut(\varphi)$, is generated by liftable partial conjugations. Since this fact is obtained based on the results in Section \ref{sec:lift_isom}, we additionally assume that $\Gamma$ and thus $\Lambda$ have no isolated vertices.

We first revisit the way how Laurence in \cite{MR1356145} proved Theorem \ref{thm:conjugating}.
Let $f\in\Aut(A_\Gamma)$ be a non-trivial conjugating automorphism.
For each vertex $v_i\in V\Gamma$, $f(v_i)$ can be represented by a reduced word $w_i v_i w_i^{-1}$; in this case, $w_i$ is said to be a \textit{conjugating word} for $f(v_i)$. 
Then $|f|$ is defined as the sum of the length of $w_i$ for all $v_i\in V\Gamma$. 
In order to prove the theorem, we need the following facts.

\begin{lemma}{\cite[Lemmas 2.5 and 2.8]{MR1356145}}\label{LL}
Let $C$ be a component of $\Gamma\setminus\st(v_1)$. For a vertex $v_2\in C$, if $w_1 v_1^{\varepsilon}$ is a left-most vertex in a conjugating word for $f(v_2)$ and $v_3$ is a vertex in $C$, then $w_1 v_1^{\varepsilon}$ is also a left-most vertex in a conjugating word for $f(v_3)$.

Moreover, such a pair $v_1$ and $v_2$ always exists.
\end{lemma}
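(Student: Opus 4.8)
The plan is to follow Laurence and split the statement into a \emph{propagation} claim and an \emph{existence} claim, working throughout with a minimal conjugating word $w_i$ for $f(v_i)$, which is well defined up to right multiplication by the centralizer $C(v_i)=\langle\st(v_i)\rangle$ (Theorem~\ref{thm:centralizer}); I would fix such a representative for each $i$ and track the left-most letters of the reduced word $w_i$. The heart of the argument is the case in which $v_2$ and $v_3$ are \emph{adjacent} in $\Gamma$; the general case then follows by connectivity of $C$, and a pair $(v_1,v_2)$ will be extracted from any non-trivial conjugating word.

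For the adjacent case I would use that $f$ is an automorphism: since $[v_2,v_3]=1$ we get $[f(v_2),f(v_3)]=1$, so $f(v_3)$ lies in the centralizer $C(f(v_2))=w_2\,C(v_2)\,w_2^{-1}=w_2\langle\st(v_2)\rangle w_2^{-1}$ by Theorem~\ref{thm:centralizer}. Hence $f(v_3)=w_2\,z\,w_2^{-1}$ with $z\in\langle\st(v_2)\rangle$, and since $z$ is conjugate to the single generator $v_3\in\st(v_2)$, parabolic conjugacy in RAAGs lets me write $z=q\,v_3\,q^{-1}$ with $q\in\langle\st(v_2)\rangle$. Thus $w_2\,q$ is a conjugating word for $f(v_3)$, so $w_3$ agrees with $w_2\,q$ modulo $C(v_3)$. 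The key point is then that, because $v_2,v_3\in C$, we have $v_1\notin\st(v_2)\cup\st(v_3)$, so the letter $v_1$ occurs neither in $q\in\langle\st(v_2)\rangle$ nor in any element of $C(v_3)=\langle\st(v_3)\rangle$; consequently no $v_1^{-\varepsilon}$ can ever be produced to cancel into the already reduced prefix $w_1v_1^{\varepsilon}$ of $w_2$ when $w_2$ is extended to $w_2q$ (or further adjusted inside its $C(v_3)$-coset). Therefore the reduced word $w_3$ still begins with $w_1v_1^{\varepsilon}$, which is the assertion.

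To pass from adjacent vertices to an arbitrary $v_3\in C$, I would choose a path $v_2=u_0,u_1,\dots,u_m=v_3$ inside the connected subgraph $C$ and apply the adjacent case along consecutive pairs $u_j,u_{j+1}$, all of which lie in $C$ so that $v_1\notin\st(u_j)$ throughout; the prefix $w_1v_1^{\varepsilon}$ then propagates from $u_0$ to $u_m$. For the existence statement I would take any generator $v_2$ with $w_2\neq 1$ (such a generator exists since $f$ is non-trivial) and let $v_1^{\varepsilon}$ be the left-most letter of $w_2$; it then remains to check that $v_2\notin\st(v_1)$, i.e.\ that $v_2$ lies in some component $C$ of $\Gamma\setminus\st(v_1)$.

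The step I expect to be the main obstacle is precisely this last verification that the left-most letter of a minimal conjugating word lies outside $\st(v_2)$. It is \emph{not} a formal consequence of reducedness or of minimality of $w_2$ alone: one can write down reduced words $w\,v_2\,w^{-1}$, with $w$ shortest in its $C(v_2)$-coset, whose left-most letter is adjacent to $v_2$, and the property genuinely uses that $f$ is a \emph{global} automorphism rather than merely that each $f(v_2)$ is a conjugate of $v_2$. Concretely, I would try to rule out a left-most letter $v_1$ with $v_1\sim v_2$ by exploiting the commutation relation between $f(v_1)$ and $f(v_2)$ (which constrains the conjugating word of $f(v_1)$ to begin with $v_1^{\varepsilon}$ as well), thereby reducing to the case where some conjugating word begins with its own generator, and then deriving a contradiction with the bijectivity of $f$, for instance by comparing total conjugating lengths after conjugating $f$ by a suitable generator. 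Keeping this normal-form bookkeeping consistent—left-most letters being well defined modulo the $C(v)$-ambiguity and stable under the reductions above—is the delicate part; the commutation-plus-centralizer mechanism of the adjacent case is, by contrast, routine once it is set up.
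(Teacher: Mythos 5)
First, a framing point: the paper never proves this lemma itself --- it is imported verbatim from Laurence (Lemmas 2.5 and 2.8 of \cite{MR1356145}), so your proposal has to stand on its own as a reconstruction of Laurence's argument. Your propagation step has the right skeleton (commutation of $f(v_2)$ and $f(v_3)$, the Centralizer Theorem, and conjugacy of $z$ to $v_3$ inside $\langle\st(v_2)\rangle$), but the cancellation analysis is incomplete: ruling out a cancelling $v_1^{-\varepsilon}$ only protects the single letter $v_1^{\varepsilon}$, not the whole prefix $w_1v_1^{\varepsilon}$. When $w_2$ is multiplied on the right by $qc$ with $q\in\langle\st(v_2)\rangle$ and $c\in C(v_3)$, letters of $w_1$ itself may a priori cancel against letters of $qc$, since $\supp(w_1)$ need not be disjoint from $\st(v_2)\cup\st(v_3)$; that would destroy the prefix even though $v_1^{\varepsilon}$ survives. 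To exclude this you must use that $w_1$ is itself a minimal conjugating word for $f(v_1)$: a letter of $w_1$ that cancels on the right must commute with $v_1$ and with every letter to its right in $w_1$, contradicting reducedness of $w_1v_1w_1^{-1}$. This is fillable, but it is exactly the content your write-up skips, and without it ``therefore $w_3$ still begins with $w_1v_1^{\varepsilon}$'' does not follow.

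The existence step, however, is genuinely wrong, not merely incomplete. If $v_1^{\varepsilon}$ is the left-most letter of a nontrivial $w_2$, then ``$w_1v_1^{\varepsilon}$ is a left-most word of $w_2$'' can hold only when $w_1=1$, i.e.\ only when $f$ fixes $v_1$ (if $w_2$ had reduced representatives beginning both with $v_1^{\varepsilon}$ and with $w_1v_1^{\varepsilon}$, then $v_1$ would commute with all of $w_1$, forcing $w_1=1$ by minimality); so verifying $v_2\notin\st(v_1)$ is not the missing point at all. Concretely, let $\Gamma$ have vertices $a,b,c$ and the single edge $\{a,b\}$, and let $f=P^{a}_{\{c\}}\circ P^{c}_{\{a,b\}}$. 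Then $f(a)=(ac)a(ac)^{-1}$, $f(b)=(ac)b(ac)^{-1}$, $f(c)=aca^{-1}$, so $w_a=w_b=ac$, $w_c=a$, and no generator is fixed. Your recipe either halts (for $v_2\in\{a,b\}$ the left-most letter is $a$ and $v_2\in\st(a)$) or outputs $(v_1,v_2)=(a,c)$, for which $w_a a^{\pm1}=aca^{\pm1}$ is certainly not a left-most word of $w_c=a$. The pairs that actually satisfy the lemma are $(c,a)$ and $(c,b)$: there $v_1=c$ is the \emph{last} letter of $w_a$, and the prefix $a$ preceding it coincides with $w_c$. That is the shape of the correct existence argument (Laurence's Lemma 2.8): one takes $v_2$ with $|w_2|$ maximal and shows, using globally that $f$ is an automorphism, that its terminal letter $v_1^{\varepsilon}$ satisfies $f(v_1)=pv_1p^{-1}$ where $p$ is the prefix of $w_2$ preceding that letter. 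Your plan of ruling out left-most letters adjacent to $v_2$ and ``comparing total conjugating lengths'' does not approach this, so the second half of the lemma is unproven.
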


Based on Lemma \ref{LL}, the Laurence's algorithm starts from $f_1:=f$ as follows:
\begin{enumerate}
    \item\label{LL1} From a conjugating automorphism $f_i$, we find two vertices $v_{i_1}$ and $v_{i_2}$ such that when $f(v_{i_1})$ is represented by a reduced word $w_{i_1} v_{i_1} w_{i_1}^{-1}$, $w_{i_1}v_{i_1}^{\varepsilon_i}$ is a left-most word in a conjugating word for $f(v_{i_2})$ where $\varepsilon_i\in\{-1,1\}$.
    \item\label{LL2} Choose the component $C_{i_1}$ of $\Gamma\setminus\st(v_{i_1})$ containing $v_{i_2}$, and let $P_i=P_{C_{i_1}}^{v_{i_1}}$ and $f_{i+1}=f_iP_i^{\varepsilon_i}$. Then Lemma \ref{LL} implies that $|f_{i+1}|<|f_i|$.
    \item We go to Item \noindent\eqref{LL1} and replace $f_i$ by $f_{i+1}$ and do the same process until the automorphism becomes the identity.
\end{enumerate}
The algorithm terminates when $f P_1^{\varepsilon_1}\cdots P_n^{\varepsilon_n}$ becomes the identity and $f$ is represented by $P_{1}^{-\varepsilon_1}\cdots P_{n}^{-\varepsilon_n}$. This is the proof of Theorem \ref{thm:conjugating}.


Since conjugating automorphisms are essential, by Corollary~\ref{lem:deck_lift} every liftable conjugating automorphism $f\in \LAut(\varphi)$ admits an essential lift $F\in \FAut(\varphi)$. We prove that in fact, it even admits a conjugating lift.

\begin{lemma} \label{lem:conj_lift}
Every liftable conjugating automorphism admits a conjugating lift.
\end{lemma}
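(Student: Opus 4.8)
The plan is to start from an \emph{essential} lift and prove it is automatically conjugating. Since any conjugating automorphism $f$ is essential, Corollary~\ref{lem:deck_lift} furnishes an essential lift $F\in\FAut(\varphi)$ of $f$, and I will show that this $F$ is already conjugating. Fix a vertex $u\in\Lambda$ and let $x$ be a cyclically reduced word conjugate to $F(u)$, so that $\supp x=\esupp F(u)$ and $u\in\supp x$ because $F$ is essential. The goal reduces to proving $\esupp F(u)=\{u\}$: if this holds then $F(u)$ is conjugate to $u^{k}$ for some $k$, whence $\phi(F(u))=f(\varphi(u))$ is conjugate both to $\varphi(u)$ and to $\varphi(u)^{k}$; comparing abelianizations forces $k=1$, so $F(u)$ is conjugate to $u$, and carrying this out for every $u$ shows $F$ is conjugating.

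To control $\esupp F(u)$ I would use the rank machinery. Automorphisms preserve ranks, so $\rank(x)=\rank(F(u))=\rank(u)=|\st(u)|$, and since $u\in\supp x$ we are in the equality case of Lemma~\ref{lem:rank}. Writing the centralizer decomposition $x=x_1^{r_1}\cdots x_k^{r_k}$ with $u\in\supp x_1$ from Theorem~\ref{thm:centralizer}, Lemma~\ref{lem:rank} gives $u\lesssim v'$ for every $v'\in\supp x$, that $\st(u)\cap\supp x_1=\{u\}$, and that each $x_i$ with $i>1$ is a single vertex adjacent to every vertex of $\supp x_1$, in particular to $u$. First I would eliminate the higher factors: the vertices $x_2,\dots,x_k$ all lie in $\lk(u)$, so any two of them are at distance at most $2$; since fibre-mates of a regular covering are at distance at least $3$, the images $\varphi(x_2),\dots,\varphi(x_k)\in\lk(\varphi(u))$ are pairwise distinct and all different from $\varphi(u)$. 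As these factors commute with $\phi(x_1)$ and with one another, none of them can cancel in $\phi(x)$, so each would enter $\esupp\phi(F(u))=\{\varphi(u)\}$ — impossible unless $k=1$. Thus $x=x_1^{r_1}$, and the rank equality then forces $\lk(x_1)=\lk(u)$, since $\lk(x_1)\subseteq\lk(u)$ with equal cardinality.

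It remains to show $\supp x_1=\{u\}$, and this is where the covering hypotheses do the real work. Take $v'\in\supp x_1$ with $v'\neq u$; then $u\lesssim v'$ and, because $\st(u)\cap\supp x_1=\{u\}$, the vertices $u$ and $v'$ are non-adjacent. If $\varphi(v')=\varphi(u)$, then Lemma~\ref{lem:simplicial_criterion}, applied with $u\lesssim v'$ and $u$ non-isolated, yields $u=v'$, a contradiction. The genuinely delicate case, which I expect to be the main obstacle, is $\varphi(v')\neq\varphi(u)$: here $\varphi(v')$ occurs in $\varphi(\supp x_1)$ yet cannot lie in $\esupp\phi(F(u))=\{\varphi(u)\}$, so all $\varphi(v')$-letters must cancel when $x_1$ is pushed down by $\phi$. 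Such a cancellation cannot come from a genuine adjacency upstairs — $x_1$ is cyclically reduced, so no two fibre-mates over $\varphi(v')$ can be brought together in $\Lambda$ — and must therefore be created by $\phi$ enlarging links downstairs. I would rule this out by tracking the commutation moves in $A_\Gamma$ that realise the cancellation and translating them back to $\Lambda$: the intermediate letters would have to project into $\st(\varphi(v'))$ without themselves lying in $\st(v')$, which, combined with $\lk(x_1)=\lk(u)$ and the distance-at-least-$3$ property of fibres, leads to a contradiction. Making this cancellation analysis precise is the technical heart of the argument; once it is in place we obtain $\supp x_1=\{u\}$, hence $\esupp F(u)=\{u\}$, and $F$ is the desired conjugating lift.
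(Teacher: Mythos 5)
Your proposal opens exactly as the paper does---take an essential lift $F$ of $f$ via Corollary~\ref{lem:deck_lift} and try to show that this lift is already conjugating---and several of your intermediate steps are sound: the reduction to proving $\esupp F(u)=\{u\}$, with the abelianization argument forcing the exponent to be $1$; the use of the equality case of Lemma~\ref{lem:rank}; and the elimination of the higher centralizer factors $x_2,\dots,x_k$ (your ``cannot cancel'' claim there is a little quick, but it can be made rigorous: every vertex of $\supp x$ lies in $V\Lambda_{\gtrsim u}$, hence all such vertices are pairwise at distance at most $2$ because $\lk(u)\neq\emptyset$, so $\varphi$ is injective on $\supp x$ and each $\varphi(x_i)$ survives in the abelianization of $\phi(x)$). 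The problem is the step you yourself flag as ``the technical heart'': ruling out a vertex $v'\in\supp x_1\setminus\{u\}$ with $\varphi(v')\neq\varphi(u)$. There you only describe a hoped-for contradiction from ``tracking the commutation moves''; no argument is actually given, and the distance constraints you invoke do not yield one. A letter $c$ of $x_1$ with $c\notin\st(v')$ but $\varphi(c)\in\st(\varphi(v'))$ corresponds to a fibre-mate $c'\in\lk(v')$ of $c$, and your own inequalities only force $d(c,c')=3$, which is a perfectly realizable configuration in a regular covering; the paper's Figure~\ref{CommutatorTransvection} shows exactly this phenomenon of words that are nontrivial upstairs collapsing under $\phi$. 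So the cancellation you must exclude can, a priori, occur, and the proposal contains no mechanism for excluding it. This is a genuine gap, not a routine verification left to the reader.

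The paper's proof sidesteps this difficulty entirely, and the contrast is instructive. It establishes $\esupp \tilde f(u)=\{u\}$ directly only for \emph{maximal} vertices $u$, where $V\Lambda_{\gtrsim u}$ is the single equivalence class $[u]$ and, by Lemma~\ref{lem:simplicial_criterion} (using the absence of isolated vertices), $\varphi$ restricts to a bijection on $[u]$---so no cancellation analysis is needed at all. It then removes the maximal vertices, observes that the restriction $\Lambda_1\to\Gamma_1=\varphi(\Lambda_1)$ is again a regular covering, that $f$ and $\tilde f$ descend to automorphisms $f_1$ and $\tilde f_1$ of the quotient RAAGs with $\tilde f_1$ a lift of $f_1$, and repeats this process. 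This induction along the link--star order is precisely what replaces the cancellation argument you left open; to complete your proof you would either need to import that inductive quotient structure or supply a genuinely new analysis of cancellation under $\phi$, which your proposal does not contain.
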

\begin{proof}
Let $f \in \LPAut(\varphi)$ be given and $F$ an essential lift of $f$ obtained by Corollary~\ref{lem:deck_lift}. Suppose $u\in V\Lambda$ is maximal. Note that $\esupp(\tilde{f}(u))$ is contained in $V\Lambda_{\gtrsim u}$, which is equal to the equivalence class of $u$, and the restriction of $\varphi$ to this equivalence class induces a bijection. 
It means that $\varphi(\esupp(\tilde{f}(u)))=\esupp(f(\varphi(u)))=\{\varphi(u)\}$ and thus $\esupp(\tilde{f}(u))=\{u\}$. Thus, $\tilde{f}$ sends maximal vertices to conjugates of themselves.

Let $\Lambda_1$ be the subgraph of $\Lambda$ obtained by removing maximal vertices, and let $\Gamma_1=\varphi(\Lambda_1)$. Then the restriction of $\varphi$ to $\Lambda_1$ is a regular covering map $\Lambda_1\to\Gamma_1$.
Note that the automorphism $f$ of $A_\Gamma$ induces an automorphism $f_1$ of $A_{\Gamma_1}$ by the composition of the restriction of $f$ to $A_{\Gamma_1}$ and the quotient map $A_\Gamma\rightarrow A_{\Gamma_1}$. Similarly, $\tilde{f}$ induces an automorphism $\tilde{f}_1$ of $A_{\Lambda_1}$ such that $\tilde{f}_1$ is a lift of $f_1$ via $\varphi_1$. As in the previous paragraph, we can deduce that $\tilde{f}_1$ sends its maximal vertices to conjugates of themselves. By repeating this process, we can obtain a conjugating lift of $f$.
\end{proof}

Before we move on, we note the following lemmas.

\begin{lemma}\label{conjugating}
Let $v_1,v_2,v_3$ be three consecutive vertices in a path in $\Gamma$ such that $d(v_1,v_3)=2$. Let $f$ be a conjugating automorphism fixing $v_1$. Then $f(v_2)$ is represented by a reduced word $wv_2w^{-1}$ where $w\in\langle\lk(v_1)\rangle$. Moreover, $f(v_3)$ is represented by a reduced word $xyv_3y^{-1}x^{-1}$ where $x\in\langle\lk(v_1)\rangle$ and $y\in\langle\st(v_2)\rangle$.
\end{lemma}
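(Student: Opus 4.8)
The plan is to exploit the defining property of a conjugating automorphism together with the Centralizer Theorem (Theorem~\ref{thm:centralizer}), using that $v_1,v_2$ are adjacent and that $v_2,v_3$ are adjacent. The whole argument is a two-step "commute, then pin down the support of the conjugating word".

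For the first assertion, I would start from the fact that $v_1$ and $v_2$ are adjacent, hence commute in $A_\Gamma$; applying $f$ and using $f(v_1)=v_1$ shows that $v_1$ commutes with $f(v_2)$. By the Centralizer Theorem, $C(v_1)=\langle\st(v_1)\rangle$, so $f(v_2)\in\langle\st(v_1)\rangle$ and therefore $\supp f(v_2)\subseteq\st(v_1)$. Taking a conjugating word $w$ for $f(v_2)$, so that $f(v_2)=wv_2w^{-1}$ is reduced, forces $\supp w\subseteq\supp f(v_2)\subseteq\st(v_1)$. Every letter of $w$ is then either $v_1^{\pm1}$ or a vertex of $\lk(v_1)$, and in both cases it commutes with $v_1$; collecting the occurrences of $v_1$ to the left gives $w=v_1^{k}w_0$ with $w_0\in\langle\lk(v_1)\rangle$. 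Since $v_1$ commutes with both $w_0$ and $v_2$, the powers of $v_1$ cancel and $f(v_2)=w_0v_2w_0^{-1}$ with $w_0\in\langle\lk(v_1)\rangle$, which is the claim.

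For the second assertion, I would use that $v_2,v_3$ are adjacent, so $f(v_2)$ and $f(v_3)$ commute. Writing $f(v_2)=w_0v_2w_0^{-1}$ from the first part and conjugating this commuting relation by $w_0^{-1}$ shows that $v_2$ commutes with $h_3:=w_0^{-1}f(v_3)w_0$; by the Centralizer Theorem again, $h_3\in C(v_2)=\langle\st(v_2)\rangle$, so $\supp h_3\subseteq\st(v_2)$. As $h_3$ is a conjugate of $v_3$, a conjugating word $y$ for $h_3$ gives $h_3=yv_3y^{-1}$ reduced with $\supp y\subseteq\supp h_3\subseteq\st(v_2)$, i.e. $y\in\langle\st(v_2)\rangle$. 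Substituting back, $f(v_3)=w_0h_3w_0^{-1}=w_0\,y\,v_3\,y^{-1}\,w_0^{-1}$, so setting $x:=w_0\in\langle\lk(v_1)\rangle$ yields the desired form $xyv_3y^{-1}x^{-1}$.

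The part I expect to be most delicate is the reducedness bookkeeping rather than the structural idea. Clearing $v_1$ entirely from the conjugating word for $f(v_2)$ relies on $f(v_2)$ being a conjugate of $v_2\neq v_1$, so that no power of $v_1$ survives (this can be seen directly as above, or via the retraction $A_\Gamma\to\langle v_1\rangle$ that kills the remaining generators). For the product $xyv_3y^{-1}x^{-1}$ one may need to absorb common letters between $x\in\langle\lk(v_1)\rangle$ and $y\in\langle\st(v_2)\rangle$ (such a letter would lie in $\lk(v_1)\cap\st(v_2)$); choosing $x$ and $y$ with $|x|+|y|$ minimal among all factorizations $xy$ of a conjugator of $f(v_3)$ removes these cancellations while preserving the two memberships, and the hypothesis $d(v_1,v_3)=2$ guarantees $v_3\notin\lk(v_1)$, so the central letter $v_3$ cannot be absorbed into the $\langle\lk(v_1)\rangle$-factor and the exhibited word is genuinely reduced.
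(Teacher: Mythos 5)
Your proof is correct, but it reaches the conclusion by a different route than the paper. For the first claim the paper simply cites Laurence (Lemma 2.9 of \cite{MR1356145}), while you reprove it from the Centralizer Theorem; this works, and in fact reducedness of $wv_2w^{-1}$ forces $v_1\notin\supp(w)$ outright (any occurrence of $v_1^{\pm1}$ in $w$ commutes with every other letter of $w$, so it could be moved into cancelling position, giving a shorter representative), hence your $k$ is $0$ and the exhibited word is automatically reduced. For the second claim, both proofs exploit $[f(v_2),f(v_3)]=1$ together with $C(v_2)=\langle\st(v_2)\rangle$, but the mechanics differ: the paper starts from a reduced word $zv_3z^{-1}$ representing $f(v_3)$ and compares the longest common left-most prefix $w'$ of $z$ and $w$, running a three-case analysis whose payoff is that the factorization $z=xy$ it produces is a factorization of an already-reduced conjugator, so reducedness of $xyv_3y^{-1}x^{-1}$ comes for free; you instead conjugate by $w_0^{-1}$ at the level of group elements, which elegantly avoids all case analysis but yields only the group identity $f(v_3)=w_0\,y\,v_3\,y^{-1}\,w_0^{-1}$, so reducedness must be restored afterwards. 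Your minimization of $|x|+|y|$ does close this gap: every possible cancellation pattern in $xyv_3y^{-1}x^{-1}$ --- a cancelling pair split between $x$ and $y$, a last letter of $y$ (or a letter of $x$ commuting past all of $y$) that commutes with $v_3$, or a $v_3^{\pm1}$ inside $y$ cancelling against the central letter --- produces a strictly shorter admissible pair, and each replacement stays in $\langle\lk(v_1)\rangle$ and $\langle\st(v_2)\rangle$ because any deleted letter lies in $\supp(x)\subseteq\lk(v_1)$ or in $\supp(y)\subseteq\st(v_2)$. (Your appeal to $d(v_1,v_3)=2$ is needed only to exclude $v_3\in\supp(x)$; the remaining patterns are killed by minimality alone.) In short, the paper's word-level bookkeeping buys reducedness for free at the price of a case analysis, whereas your element-level argument is conceptually cleaner but must pay for reducedness at the end; that payment, which you only sketch, is the one step a careful referee would ask you to write out in full.
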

\begin{proof}
The first part of the statement is basically the content of \cite[Lemma 2.9]{MR1356145}. Suppose $f(v_3)$ is represented by a reduced word $zv_3z^{-1}$ and $w'$ is a longest left-most word of $z$ which is also a left-most word of $w$; note that $w'\in\langle\lk(v_1)\rangle$.
From the fact that $f(v_2)$ and $f(v_3)$ commute, we will show the second statement of the lemma.
If $w'=w=z$, then the statement holds. 
Otherwise, there are three cases: 
\begin{enumerate}
    \item $w'=w$ and $z$ is a reduced word of the form $w'z'$ for a non-trivial reduced word $z'$. In this case, $z'$ must commute with $v_2$ so that $z'\in\langle\st(v_2)\rangle$.
    \item $w'=z$ and $w$ is a reduced word of the form $w'w''$ for a non-trivial reduced word $w''$. In this case, $z'$ must be trivial and $w''$ must commute with $v_3$.
    \item $w$ and $z$ are reduced words of the form $w'w''$ and $w'z'$ for non-trivial reduced words $w''$ and $z'$, respectively. In this case, $z'$ must commute with both $w''$ and $v_2$. It means that $z'$ is in $\langle\st(v_2)\rangle$.
\end{enumerate}
In every case, we see that $z$ is a reduced word of the form $xy$ for $x\in\langle\lk(v_1)\rangle$ and $y\in\langle\st(v_2)\rangle$.
\end{proof}

\begin{theorem} \label{thm:liftable_conjugating}
For a regular covering map $\varphi: \Lambda \to \Gamma$ of graphs without isolated vertices, liftable partial conjugations generate $\LPAut( \varphi)$.
\end{theorem}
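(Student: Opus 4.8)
The plan is to imitate Laurence's algorithm (the proof of Theorem~\ref{thm:conjugating} recalled above), but to arrange that every partial conjugation we peel off already has the liftable form $P_{\bar C}^{v}$. I would argue by induction on the conjugating length $|f|$ of a given $f \in \LPAut(\varphi)$. If $|f|=0$ then $f$ is the identity and there is nothing to prove. For the inductive step I run one step of Laurence's algorithm: by Lemma~\ref{LL} there are a vertex $v_1$, a component $C$ of $\Gamma\setminus\st(v_1)$, a sign $\varepsilon$, and a vertex $v_2\in C$ so that, writing $f(v_1)=w_1 v_1 w_1^{-1}$, the word $w_1 v_1^\varepsilon$ is a left-most vertex of a conjugating word for $f(v_2)$, and the same then holds for every vertex of $C$. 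The obstruction is that $P_C^{v_1}$ need not be liftable, i.e.\ possibly $\bar C\neq C$. The heart of the proof is therefore the following claim.

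\textbf{Claim.} $w_1 v_1^\varepsilon$ is a left-most vertex of a conjugating word for $f(v_3)$ for \emph{every} $v_3\in\bar C$.

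To prove the Claim I would pass to a conjugating lift: by Lemma~\ref{lem:conj_lift}, $f$ admits a conjugating lift $F\in\FAut(\varphi)$, say $F(u)=W_u u W_u^{-1}$. Fix a component $\widetilde C$ of $\varphi^{-1}(C)$ and a vertex $u_1\in\varphi^{-1}(v_1)$ whose star is adjacent to $\widetilde C$; lifting the conjugating word for $f(v_2)$ through $\varphi$ shows that, for the lift $u\in\widetilde C$ of $v_2$, the word $W_u$ has left-most vertex $\widetilde{w_1}\,u_1^\varepsilon$, a $\varphi$-lift of $w_1 v_1^\varepsilon$. Now apply Laurence's Lemma~\ref{LL} \emph{upstairs}, to the conjugating automorphism $F$ of $A_\Lambda$ and to the component $C(u_1,\widetilde C)$ of $\Lambda\setminus\st(u_1)$ containing $\widetilde C$: the prefix $\widetilde{w_1}\,u_1^\varepsilon$ is then a left-most vertex of a conjugating word for $F(u')$ for \emph{every} $u'\in C(u_1,\widetilde C)$. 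By the very definition of $\bar C$ through $D=\bigcap_{u\in\varphi^{-1}(v_1)} C(u,\widetilde C)\subseteq C(u_1,\widetilde C)$, each component making up $\bar C$ has a lift contained in $D$, hence in $C(u_1,\widetilde C)$; choosing for $v_3\in\bar C$ a lift $u'$ in this component and projecting the prefix $\widetilde{w_1}\,u_1^\varepsilon$ back down by $\varphi$ yields $w_1 v_1^\varepsilon$ as a left-most vertex of a conjugating word for $f(v_3)$, proving the Claim.

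Granting the Claim, set $f'=f\,(P_{\bar C}^{v_1})^{\varepsilon}$. Since $w_1 v_1^\varepsilon$ is a common left-most vertex of the conjugating words of all generators supported in $\bar C$, the partial conjugation $(P_{\bar C}^{v_1})^{\varepsilon}$ cancels one occurrence of $v_1^{\pm1}$ from each of them, so $|f'|<|f|$; moreover $f'$ is again conjugating and liftable, the latter because $f$ is liftable and $P_{\bar C}^{v_1}$ is liftable by Corollary~\ref{prop:sufficient_condition_partial} (here $\overline{\bar C}=\bar C$). Thus $f'\in\LPAut(\varphi)$ with strictly smaller length, and by induction $f'$ is a product of liftable partial conjugations; hence so is $f=f'\,(P_{\bar C}^{v_1})^{-\varepsilon}$, which closes the induction.

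I expect the main obstacle to be the Claim, and specifically the rigorous passage between $\Gamma$ and $\Lambda$. Two points need care: first, showing that the $\varphi$-image of the upstairs prefix $\widetilde{w_1}\,u_1^\varepsilon$ genuinely survives, without cancellation, as the left-most vertex of a reduced conjugating word for $f(v_3)$ downstairs; and second, verifying that the vertex $u_1$ and the component $C(u_1,\widetilde C)$ can be chosen so that Lemma~\ref{LL} applies upstairs to all of $D$ simultaneously. For these, Lemma~\ref{lem:Minimality} (the structure of $D$ relative to the stars $\st(u)$ for $u\in\varphi^{-1}(v_1)$) and Lemma~\ref{conjugating} (the shape of conjugating words along a path through $\st(v_1)$) are the natural tools to control the relevant cancellations.
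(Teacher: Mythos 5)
Your overall strategy coincides with the paper's: run Laurence's algorithm, pass to a conjugating lift via Lemma \ref{lem:conj_lift}, transfer the left-most-letter property from $\Gamma$ to $\Lambda$, spread it over a whole component of $\Lambda\setminus\st(u)$ by applying Lemma \ref{LL} upstairs, and project back down to conclude that the left-most property holds on all of $\bar C$, so that only liftable partial conjugations $P_{\bar C}^{v}$ ever need to be peeled off. The paper packages this as a contradiction with the maximality of the union $C_1$ chosen in the algorithm, whereas you run a direct induction on $|f|$ peeling off $P_{\bar C}^{v_1}$; that difference is cosmetic, and your bookkeeping around it (liftability of $P_{\bar C}^{v_1}$ from $\overline{\bar C}=\bar C$ and Corollary \ref{prop:sufficient_condition_partial}, the length decrease, the induction) is fine.

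The genuine gap is inside your proof of the Claim, at the step ``lifting the conjugating word for $f(v_2)$ through $\varphi$ shows that $W_u$ has left-most vertex $\widetilde{w_1}\,u_1^\varepsilon$.'' This is asserted, not proved, and as stated it is not even well-posed: conjugating words are only determined up to right multiplication by centralizer elements, the image $\phi(W_u)$ need not be reduced, and for Lemma \ref{LL} to apply upstairs you need $\widetilde{w_1}$ to be \emph{simultaneously} the conjugating word of $F(u_1)$ and a lift of $w_1$ --- none of which comes for free. The paper's resolution is a normalization you never make: compose the conjugating lift with an inner automorphism $\tilde\iota$ of $A_\Lambda$ so that $\tilde\iota F$ \emph{fixes} the chosen vertex $u\in\varphi^{-1}(v_1)$ (correspondingly replacing $f$ by $\iota f$, which fixes $v_1$). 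This collapses the prefix $w_1v_1^\varepsilon$ to the single letter $v_1^\varepsilon$, and, crucially, it is exactly the hypothesis of Lemma \ref{conjugating} (``a conjugating automorphism fixing $v_1$''), which then yields the structural decomposition $z_i=x_iy_i$ with $x_i\in\langle\lk(u)\rangle$ and $y_i\in\langle\st(u_i)\rangle$ for the upstairs conjugating words of vertices at distance $2$ from $u$. That decomposition is what allows the letter-by-letter argument in both directions: $v_1$ left-most downstairs forces $u\in\esupp(y_1)$, hence $u$ left-most upstairs; and after Lemma \ref{LL} upstairs, $u$ left-most in $z_0$ forces $v_1$ left-most downstairs because $\varphi(u)$ and $\varphi(b_0)$ do not commute. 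You do name Lemma \ref{conjugating} as a ``natural tool'' at the end, but without the inner-automorphism normalization its hypothesis is not satisfied, and the two delicate passages you flag remain unproved; since they are the heart of the theorem, the proposal as written is a correct plan rather than a proof.
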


\begin{proof}
Let $f$ be a liftable partial conjugation in $\LAut(\Gamma)$. 
Since $f$ is a conjugating automorphism, by using Laurence's algorithm, we can obtain a sequence of partial conjugations $(P_{C_1}^{v_1})^{\varepsilon_1},\cdots,(P_{C_n}^{v_n})^{\varepsilon_n}$ such that $f(P_{C_1}^{v_1})^{\varepsilon_1},\cdots,(P_{C_n}^{v_n})^{\varepsilon_n}$ is the identity. Additionally, we assume that each $(P_{C_i}^{v_i})^{\varepsilon_i}$ is maximal in the following sense: in the algorithm, each $C_i$ can be allowed to be a union of components (rather than a component) so that $C_i$ is chosen to be maximal under the set inclusion in Item (\ref{LL2}) of the algorithm; for instance, $C_1$ is the set of vertices $v$ in $\Gamma\setminus\st(v_1)$ such that $w_1v_1^{\varepsilon_1}$ is a left-most word in a conjugating word for $f(v)$.

Assume that $(P_{C_1}^{v_1})^{\varepsilon_1}$ is not liftable. 
Suppose $C_1=B_1\sqcup\cdots\sqcup B_m$ where $B_i$ is a component of $\Gamma\setminus\st(v_1)$. 
Corollary \ref{prop:sufficient_condition_partial} implies that after possibly reordering the $B_i$'s, $\bar{B_1}$ contains $B_0\in\mathcal{C}(v_1)$ which is not any of $B_1,\cdots,B_m$.  
Since $\bar{B_0}=\bar{B_1}$, by Lemma~\ref{lem:Minimality} and the paragraph below that lemma, we can choose a component $\widetilde{B}_0$ of $\varphi^{-1}(B_0)$ and a component $\widetilde{B}_1$ of $\varphi^{-1}(B_1)$ such that they are adjacent to $\st(u)$ for some $u\in\varphi^{-1}(v_1)$ and contained in the same component of $\Lambda\setminus\st(u)$.

By Lemma \ref{lem:conj_lift}, there is a conjugating lift $F$ of $f$. Let $\tilde{\iota}$ be an inner automorphism in $\Aut(A_\Lambda)$ such that $\tilde{\iota}F$ fixes the vertex $u$. By Lemma \ref{lem:inner}, $\tilde{\iota}$ induces an inner automorphism $\iota$ on $A_\Gamma$ such that $\tilde{\iota}$ is a lift of $\iota$ and, in particular, $\tilde{\iota}F$ is a lift of $\iota f$.
We note that when we apply Laurence's algorithm to $\iota f$, $P_{C_1}^{v_1}$ can be a first partial conjugation obtained in the process, as it is a first partial conjugation obtained from $f$, i.e. $|\iota f|>|\iota f P_{C_1}^{v_1}|$.

Let $b_0\in \widetilde{B}_0$ and $b_1\in \widetilde{B}_1$ be vertices which are at distance 2 from $u$, and let $u_i$ be a vertex in $\lk(u)$ which is adjacent to $b_i$ for $i=0,1$ (possibly, $u_0=u_1$). Let $z_i b_i z_i^{-1}$ be a reduced word representing $\tilde{\iota}F(b_i)$. By Lemma \ref{conjugating}, then $z_i$ is represented by $x_iy_i$ where $x_i\in\langle\lk(u)\rangle$ and $y_i\in\langle\st(u_i)\rangle$.
Let $\iota f(\varphi(b_i))$ be represented by a reduced word $w_i\varphi(b_i)w_i^{-1}$.
Note that as group elements, $$w_i\varphi(b_i)w_i^{-1}=\phi(x_i)\phi(y_i)\varphi(b_i)\phi(y_i^{-1})\phi(x_i^{-1})$$ where $\phi(x_i)\in\langle\lk(\varphi(u))\rangle$ and $\phi(y_i)\in\langle\st(\varphi(u_i))\rangle$.
Since $v_1$ is a left-most word in $w_1$ by the algorithm, $\esupp(\phi(y_1))$ contains $v_1$ and thus $\esupp(y_1)$ must contain $u$.
Since $u$ commutes with $x_1$, $u$ is a left-most word in $z_1$. 
Since $\widetilde{B}_0$ and $\widetilde{B}_1$ are contained in the same component of $\Lambda\setminus\st(u)$, by Lemma~\ref{LL}, $u$ is also a left-most word in $z_0$.
Since $\varphi(u)$ and $\varphi(b_0)$ do not commute in $A_\Gamma$, $v_1$ must be a left-most word in $w_0$. 
However, this is a contradiction with the maximality of $C_1$.
Thus $(P_{C_1}^{v_1})^{\varepsilon_1}$ must be liftable.

Since the partial conjugation $(P_{C_2}^{v_2})^{\varepsilon_2}$ is obtained from $f(P_{C_1}^{v_1})^{\varepsilon_1}$ by applying Laurence's algorithm, by the same reason as for $(P_{C_1}^{v_1})^{\varepsilon_1}$, we can know that $(P_{C_2}^{v_2})^{\varepsilon_2}$ is also liftable. By induction on $n$, all $(P_{C_i}^{v_1})^{\varepsilon_i}$'s are liftable and therefore, $f$ must also be liftable.
\end{proof}

The proof of Theorem \ref{thm:liftable_conjugating} implies that the converse of Corollary \ref{prop:sufficient_condition_partial} also holds if $\Gamma$ has no isolated vertex. Thus, we obtain the following proposition.

\begin{proposition}\label{lift_partial_2}
Let $\varphi:\Lambda\rightarrow\Gamma$ be a regular covering map of graphs without isolated vertices.
For any vertex $v\in \Gamma$ and a union $C$ of components of $\Gamma\setminus\st(v)$, then, the partial conjugation $P_C^v$ is liftable if and only if $\bar{C} = C$.
\end{proposition}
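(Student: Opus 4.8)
The plan is to prove Proposition~\ref{lift_partial_2} by combining the sufficiency already established in Corollary~\ref{prop:sufficient_condition_partial} with a converse extracted from the argument in Theorem~\ref{thm:liftable_conjugating}. First I would recall that the ``if'' direction, namely that $\bar{C}=C$ implies $P_C^v$ is liftable, is exactly Corollary~\ref{prop:sufficient_condition_partial}, so nothing further is needed there. The entire content of the proposition thus lies in the ``only if'' direction: assuming $P_C^v$ is liftable, I must show $\bar{C}=C$.

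For the converse, I would argue by contradiction, assuming $P_C^v$ is liftable but $\bar{C}\neq C$. Writing $C=B_1\sqcup\cdots\sqcup B_m$ as a disjoint union of components of $\Gamma\setminus\st(v)$, the failure $\bar{C}\neq C$ means (after reordering) that some $\bar{B_1}$ contains a component $B_0\in\mathcal{C}(v)$ that is not among $B_1,\dots,B_m$, exactly as in the proof of Theorem~\ref{thm:liftable_conjugating}. The key observation is that a partial conjugation is a special case of a conjugating automorphism, so everything proved about conjugating automorphisms applies. In particular, the detailed contradiction derived in the proof of Theorem~\ref{thm:liftable_conjugating} — using Lemma~\ref{lem:conj_lift} to pass to a conjugating lift $F$, normalizing by an inner automorphism so that a chosen vertex $u\in\varphi^{-1}(v)$ is fixed, then applying Lemma~\ref{conjugating} and Lemma~\ref{LL} to the vertices $b_0\in\widetilde{B}_0$ and $b_1\in\widetilde{B}_1$ to force $v$ to be a left-most letter in a conjugating word for $\iota f(\varphi(b_0))$ — applies verbatim with $f=P_C^v$. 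The upshot is that $B_0$ would have to be absorbed into $C$ under the algorithm, contradicting $B_0\notin\{B_1,\dots,B_m\}$.

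Concretely, I would invoke the paragraph in the proof of Theorem~\ref{thm:liftable_conjugating} that establishes liftability of the first partial conjugation $(P_{C_1}^{v_1})^{\varepsilon_1}$ produced by Laurence's algorithm, and simply specialize it: if $P_C^v$ is itself already a (single) liftable partial conjugation, then running Laurence's algorithm on it produces $P_C^v$ as its first and only partial conjugation, and the maximality argument forces $\bar{C}=C$. Thus the contradiction is precisely that the liftability of $P_C^v$ is incompatible with $\bar{C}\neq C$, which is what the interior of that proof demonstrates.

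I expect the main obstacle to be purely expository rather than mathematical: the substantive work has already been carried out inside Theorem~\ref{thm:liftable_conjugating}, so the challenge is to phrase the converse cleanly as a corollary of that proof without reproducing the whole argument. The only point requiring genuine care is ensuring that the hypotheses used there (no isolated vertices, regularity of $\varphi$, the existence via Lemma~\ref{lem:conj_lift} of a conjugating lift) are all available and that the chosen components $\widetilde{B}_0$ and $\widetilde{B}_1$ lie in the same component of $\Lambda\setminus\st(u)$, which is guaranteed by Lemma~\ref{lem:Minimality} and the remark following it. With these in place, the proof reduces to the single sentence that the argument of Theorem~\ref{thm:liftable_conjugating}, applied to $f=P_C^v$, yields $\bar{C}=C$ whenever $P_C^v$ is liftable.
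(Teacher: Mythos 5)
Your proposal is correct and takes essentially the same route as the paper: the authors justify Proposition \ref{lift_partial_2} precisely by noting that the ``if'' direction is Corollary \ref{prop:sufficient_condition_partial}, while the ``only if'' direction is extracted from the proof of Theorem \ref{thm:liftable_conjugating}, whose maximality contradiction (via Lemma \ref{lem:conj_lift}, Lemma \ref{conjugating} and Lemma \ref{LL}) only uses the liftability of $f$ and the failure $\bar{C}\neq C$, and hence applies verbatim to $f=P_C^v$. Your one point of care---that Laurence's algorithm run on $P_C^v$ itself yields $C_1=C$ as the maximal first union, so the forced absorption of $B_0$ contradicts maximality---is exactly the specialization the paper intends.
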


\section{Finite generation of a liftable automorphism group} \label{sec:gen_liftable}
Until now, we have obtained sufficient conditions for elementary automorphisms of $A_\Gamma$ to be liftable. The following is a summary which is nothing but an amalgamation of Lemma \ref{lem:lift_sym_inv}, Lemma \ref{lem:lift_transvection}, Corollary \ref{prop:sufficient_condition_partial} and Corollary~\ref{cor:lift_iso}.

\begin{theorem}\label{prop:elementary_lift}
For a regular covering map $\varphi: \Lambda \to \Gamma$ of graphs, the following hold.
\begin{enumerate}
\item For every $v \in V\Gamma$, the inversion of $v$ is liftable.
\item A graph symmetry $\sigma$, considered as an element of $\Aut(A_\Gamma)$, is liftable if and only if it can be lifted through the covering map $\varphi$.
\item For two vertices $v, v' \in V\Gamma$, if $v \gtrsim_\varphi v'$, the transvection $T_{v'}^v$ is liftable.
\item For $v \in V\Gamma$ and $C$ an union of components of $\Gamma \setminus \st(v)$, the partial conjugation $P_C^v$ is liftable if $\bar{C} = C$. 
\end{enumerate}
\end{theorem}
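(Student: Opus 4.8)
The plan is to recognize that this theorem is a bookkeeping statement: each of its four items has already been established earlier in the paper, either verbatim or up to a trivial relabeling. The proof therefore amounts to matching each clause to the corresponding lemma or corollary and confirming that the hypotheses coincide. I would proceed item by item.

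For item (1), I would simply invoke Lemma~\ref{lem:lift_sym_inv}, which asserts that every inversion of $A_\Gamma$ is liftable, indeed even without the regularity hypothesis on $\varphi$. Item (2) is precisely the content of Corollary~\ref{cor:lift_iso}, stating that a graph symmetry $\sigma$ lies in $\LAut(\varphi)$ if and only if it lifts through $\varphi$ to a graph symmetry of $\Lambda$; nothing further is needed.

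For item (3), the only point requiring care is the notational convention for transvections. Lemma~\ref{lem:lift_transvection} is phrased for $T_v^{v'}$ under the hypothesis $v \lesssim_\varphi v'$, whereas the present statement concerns $T_{v'}^v$ under $v \gtrsim_\varphi v'$. These are the same assertion after exchanging the roles of $v$ and $v'$: the relation $v \gtrsim_\varphi v'$ is exactly $v' \lesssim_\varphi v$, and the transvection that sends the smaller generator into a product with the larger one is precisely the one handled by the lemma. I would make this substitution explicit and then cite Lemma~\ref{lem:lift_transvection}. Finally, item (4) is Corollary~\ref{prop:sufficient_condition_partial} word for word, giving that $P_C^v$ is liftable whenever $\bar{C} = C$.

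There is essentially no obstacle here, since the theorem introduces no new argument; the only place a reader might stumble is the transvection relabeling in item (3), and making that correspondence transparent is the entire substance of the proof. I would close by noting that the partial converses, asserting that these sufficient conditions are also necessary when $\Gamma$ has no isolated vertices, are proved separately in Corollary~\ref{lem:lift_transvection2} and Proposition~\ref{lift_partial_2}, and are not required for this amalgamation.
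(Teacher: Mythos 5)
Your proposal is correct and matches the paper exactly: the paper states this theorem with no separate proof, describing it as ``nothing but an amalgamation of Lemma~\ref{lem:lift_sym_inv}, Lemma~\ref{lem:lift_transvection}, Corollary~\ref{prop:sufficient_condition_partial} and Corollary~\ref{cor:lift_iso},'' which is precisely your item-by-item matching. Your explicit handling of the transvection relabeling in item (3) is a point the paper glosses over, but it is the same identification.
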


If we additionally assume that $\Gamma$ has no isolated vertices, then we can show the finite generation of $\LAut( \varphi)$ by following the philosophy of Laurence's proof of finite generation of the automorphism group of a RAAG.

\begin{theorem} \label{thm:lifable_group}
Let $\varphi: \Lambda \to \Gamma$ be a regular covering map of graphs without isolated vertices.
Then $\LAut( \varphi)$ is generated by liftable graph symmetries, inversions, liftable transvections, and liftable partial conjugations.
\end{theorem}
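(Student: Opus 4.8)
The plan is to mimic Laurence's proof of Theorem~\ref{thm:Laurence_generation} (finite generation of $\Aut(A_\Gamma)$), but to carry it out entirely \emph{inside} $\LAut(\varphi)$, using the liftability criteria collected in Theorem~\ref{prop:elementary_lift} together with Theorem~\ref{thm:liftable_conjugating} as the key new ingredients. First I would recall that by Proposition~\ref{prop:decom} every automorphism $f\in\Aut(A_\Gamma)$ decomposes as $f=gh$ with $g\in\PAut(A_\Gamma)$ a conjugating automorphism and $h\in\mathcal{H}$ a product of graph symmetries, inversions, and transvections with $h(v)$ cyclically reduced for every $v$. The strategy is: given a \emph{liftable} $f$, produce such a decomposition in which \emph{both} $g$ and $h$ are individually liftable, then handle each factor separately. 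Since liftable partial conjugations generate $\LPAut(\varphi)$ by Theorem~\ref{thm:liftable_conjugating}, the factor $g$ will automatically be a product of the desired generators once we know $g$ is liftable; and the factor $h$ must be shown to be a product of liftable inversions, liftable transvections, and liftable graph symmetries.

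The crux is to arrange that the factors in the decomposition inherit liftability from $f$. Here I would exploit Theorem~\ref{thm:iso_thm} and Corollary~\ref{cor:lift_iso}: there is a graph symmetry $\sigma$ making $f\sigma^{-1}$ essential, and by Theorem~\ref{thm:lift_iso} (applied to the liftable $f$) this $\sigma$ is itself liftable. So I may replace $f$ by the liftable essential automorphism $f\sigma^{-1}$ and reduce to the essential case, where $\sigma$ can be taken to be the identity. Now I run the inductive construction in the proof of Proposition~\ref{prop:decom}, building $h=T_1T_2\cdots$ one vertex at a time along the link-star order (Lemma~\ref{OrderOnV}). The essential point to verify is that each transvection $T_i$ appearing in this construction, sending $v_i\mapsto v'v_i$ with $v'\in\esupp f(v_i)$, is actually liftable: by Lemma~\ref{lem:lift_essupp} the essential support of $f(v_i)$ is contained in $V\Gamma_{\gtrsim_\varphi v_i}$, so every vertex $v'$ that can occur satisfies $v_i\lesssim_\varphi v'$, and hence the transvection $T^{v'}_{v_i}$ is liftable by Lemma~\ref{lem:lift_transvection} (equivalently Theorem~\ref{prop:elementary_lift}(3)). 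The inversions appearing in $h$ are liftable unconditionally by Lemma~\ref{lem:lift_sym_inv}. Therefore $h$ is a product of liftable elementary automorphisms, so $h\in\LAut(\varphi)$, and consequently $g=fh^{-1}$ is liftable as well, being a product of liftable automorphisms.

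Having produced a liftable conjugating automorphism $g$ and a liftable $h\in\mathcal{H}$, I would finish by invoking Theorem~\ref{thm:liftable_conjugating}: since $g\in\LPAut(\varphi)$, it is a product of liftable partial conjugations. Combined with the expression of $h$ as a product of liftable graph symmetries, inversions, and transvections, this writes $f=gh$ as a product of the four advertised types of liftable generators, establishing that they generate $\LAut(\varphi)$.

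The main obstacle I anticipate is the bookkeeping in the reduction to the essential case, specifically making sure that the liftability of the leading graph symmetry $\sigma$ and of each intermediate transvection $T_i$ is genuinely controlled by the $\varphi$-suborder $\lesssim_\varphi$ rather than merely by $\lesssim$. The subtlety is that $\lesssim_\varphi$ is strictly finer than $\lesssim$ (as Figure~\ref{fig:2_fold_simplicial_cover} shows), so it is \emph{not} automatic that a transvection allowed by $\lesssim$ in Laurence's original argument is liftable; this is exactly where the no-isolated-vertices hypothesis and Lemma~\ref{lem:lift_essupp} do the work, forcing $\esupp f(v_i)\subseteq V\Gamma_{\gtrsim_\varphi v_i}$ so that only $\lesssim_\varphi$-admissible (hence liftable) transvections appear. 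Care must also be taken that the inner automorphism $\iota_i$ used to cyclically reduce at each step does not disturb liftability, but this is harmless since every inner automorphism is liftable by Lemma~\ref{lem:inner_liftable} and can be absorbed into the conjugating factor $g$.
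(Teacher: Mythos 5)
Your overall architecture is the same as the paper's: essentialize $f$ using a liftable graph symmetry (Theorem~\ref{thm:lift_iso}/Corollary~\ref{cor:lift_iso}), strip $f$ down to a liftable conjugating automorphism by composing with liftable transvections, inversions and inner automorphisms along the link-star order, and finish with Theorem~\ref{thm:liftable_conjugating}. The final bookkeeping ($h$ liftable $\Rightarrow g=fh^{-1}$ liftable and conjugating) is also fine. However, there is a genuine gap in the middle step, precisely where you claim that a \emph{single} application of Lemma~\ref{lem:lift_essupp} to $f$ controls all the transvections that arise. In the construction of Proposition~\ref{prop:decom}, the automorphism $T_i$ at step $i$ must satisfy $f\sigma h_{i-1}T_i(v_i)\sim v_i$, i.e.\ $T_i(v_i)$ must be a conjugate of $(\iota_i f\sigma h_{i-1})^{-1}(v_i)$. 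The vertices occurring in the transvections composing $T_i$ are therefore governed by the essential support of the image of $v_i$ under the \emph{inverse} of the intermediate automorphism, not by $\esupp f(v_i)$, and these two sets differ in general. Concretely, for $\Gamma$ a triangle on $\{a,b,c\}$ (so $A_\Gamma\cong\mathbb{Z}^3$, no isolated vertices) and the essential automorphism $f\colon a\mapsto ab,\ b\mapsto bc,\ c\mapsto c$, one has $\esupp f(a)=\{a,b\}$, yet $f^{-1}(a)=ab^{-1}c$, so the step for $a$ requires the transvection $T_a^{c}$, which your criterion ($c\in\esupp f(a)$) does not see. So ``only transvections $T_{v_i}^{v'}$ with $v'\in\esupp f(v_i)$ appear'' is false, and with it the stated justification for liftability of $h$ collapses.

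The conclusion is still salvageable, but only by doing what the paper's proof does and what your proposal explicitly tries to bypass: applying the support-control argument to each \emph{intermediate} automorphism rather than once to $f$. In the paper, the induction carries along $f_{j-1}=g_{j-1}fh_{j-1}$, records as part of the inductive claim that $f_{j-1}$ is essential, notes that it is liftable (a product of liftable maps), takes an \emph{essential lift} $\tilde f_{j-1}$ via Theorem~\ref{thm:lift_iso}, cyclically reduces upstairs, and pushes down through Proposition~\ref{prop:essupp_auto} and Lemma~\ref{lem:cover-preserving} to get $\supp\bigl(f_{j-1}\iota_j(v_j)\bigr)\subseteq V\Gamma_{\gtrsim_\varphi v_j}$; this is an inline re-derivation of Lemma~\ref{lem:lift_essupp} for $f_{j-1}$, and it is needed at every step because the accumulated factors (in your scheme, the inverse $(f\sigma h_{i-1})^{-1}$; in the paper's left-composition scheme, the factor $g_{j-1}$) change the relevant supports. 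To repair your argument you would have to (i) observe that each intermediate automorphism, and its inverse, is again liftable and essential (essentiality of the inverse is an additional fact you never address), and (ii) apply Lemma~\ref{lem:lift_essupp} to \emph{that} automorphism at each step. With those two points added, your route merges back into the paper's proof.
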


\begin{proof}
Choose $f \in \LAut( \varphi)$. By Theorem~\ref{thm:lift_iso}, we can assume that $f$ is essential, and by Lemma~\ref{OrderOnV}, suppose that the vertices $v_1, \cdots, v_k$ of $\Gamma$ are ordered such that if $v_i \gnsim v_{i'}$, then $i < {i'}$. Let $\mathcal{E}$ be the subgroup of $\Aut(A_\Gamma)$ generated by liftable graph symmetries, inversions, liftable transvections, and liftable partial conjugations.
We claim that for each $j\in\{1,\cdots,k\}$, there exist $g_{j}, h_{j} \in \mathcal{E}$ such that $f_j=g_j f h_j$ is essential and $f_{j}(v_i)$ is conjugate to $v_i$ for all $i \leq j$.
If this claim is true, then $f_k$ is a liftable conjugating automorphism. By Theorem \ref{thm:liftable_conjugating}, then we can conclude that $f$ is a product of liftable elementary automorphisms.

To show the claim, we use an induction on the indices of vertices.
By Theorem~\ref{thm:lift_iso}, there exists an essential lift $\tilde{f}$ of $f$. 
By Lemma~\ref{lem:cover-preserving}, any vertex $u_1\in\varphi^{-1}(v_1)$ is maximal in $V\Lambda$. 
Let $\tilde\iota_1$ be an inner automorphism satisfying that $\tilde{f}\tilde\iota_1(u_1)$ is cyclically reduced.
By Proposition \ref{prop:essupp_auto}, then $\supp(\tilde{f}\tilde{\iota}_1(u_1))$ is a subset of $[u_1]$.
Let $\iota_1 = \varphi\circ \tilde\iota_1 \circ\varphi^{-1}$ be an inner automorphism of $A_\Gamma$, which is well-defined by Lemma \ref{lem:inner}.
Then we have $\supp(f\iota_1(v_1))=\varphi(\supp(\tilde{f}\tilde{\iota}_1(u_1)))\subseteq [v_1]_{\varphi}$.
Hence there exists a product $t_1$ of liftable transvections (or their inverses) such that $t_1f\iota_1(v_1) \in \{ v_1, v_1^{-1} \}$.
For the inversion $c_1$ of $v_1$, then $c_1^{\ell_1}t_1f\iota_1(v_1)$ is equal to $v_1$ for some $\ell_1\in\{0,1\}$.
Since $f_1=c_1^{\ell_1}t_1f\iota_1$ is essential, $g_1 := c_1^{\ell_1} t_1$ and $h_1 := \iota_1$ are what we want.

The proof of the induction step is similar to the above.
Assume for some $j \in \{ 2, \dots, k \}$, there exist $g_{j-1}, h_{j-1} \in \mathcal{E}$ such that the claim holds.
By Theorem \ref{thm:lift_iso}, then there exists an essential lift $\tilde{f}_{j-1}$ of $f_{j-1}$.
Let $\tilde\iota_{j}$ be an inner automorphism of $A_\Gamma$ so that $\tilde{f}_{j-1} \tilde\iota_j(u_j)$ is cyclically reduced for some $u_j \in \varphi^{-1}(v_j)$.
By Proposition \ref{prop:essupp_auto}, we have $\supp \tilde{f}_{j-1} \tilde\iota_j(u_j)\subseteq V\Gamma_{\gtrsim u_j}$, which implies $\supp f_{j-1}\iota_j(v_j) \subseteq V\Lambda_{\gtrsim_{\varphi} v_j}$ for $\iota_j=\varphi\circ\tilde{\iota}_j\circ\varphi^{-1}$.
So there exists a product of liftable transvections, denoted by $t_j$, such that $t_jf_{j-1}\iota_j(v_j) \in \{ v_j, v_j^{-1} \}$.
Therefore, $c_jt_jf_{j-1}\iota_j(v_j)$ is equal to $v_j$ for some power of the inversion $c_j: v_j \mapsto v_j^{\pm 1}$.
Hence, the claim holds.
\end{proof}

%
%
%
%
%

\section{Lifts of the identity} \label{sec:lift_identity}

Associated to a regular covering map $\varphi \colon \Lambda \to \Gamma$ of graphs, by definition, 
we have the following short exact sequence
$$
1 \to \FD(\varphi) \to \FAut(\varphi) \to \LAut(\varphi) \to 1
$$
where $\FD(\varphi)$ is the group containing all the automorphisms $F \in \Aut(A_\Lambda)$ which are lifts of the identity in $\Aut(A_\Gamma)$. 
If we consider the deck transformation group $\Deck(\varphi)$ as a subgroup of $\Aut(A_\Lambda)$, it is obviously contained in $\FD(\varphi)$.
The goal of this section is to find what type of automorphisms are in $\FD(A_\Lambda)$ and study its structure.

The covering map $\varphi$ induces a surjective homomorphism $\phi \colon A_\Lambda \to A_\Gamma$. We thus also have a short exact sequence $1 \to \ker(\phi) \to A_\Lambda \to A_\Gamma \to 1$. It follows that for any element $k \in \ker(\phi)$, the inner automorphism of $A_\Lambda$ by $k$ is in $\FD(\varphi)$.

But we can find more surprising automorphisms in $\FD(\varphi)$. Let $x,y,z$ be vertices in $\Lambda$ such that $\lk(x) \subseteq \st(y) \cap \st(z)$. We denote by $T_x^{[y,z]}$ the automorphism that sends $x$ to $x[y,z]$ and fixes all other vertices. This kind of automorphisms are called \textit{commutator transvections}. Note that $T_x^{[y,z]}$ is non-trivial if and only if $y$ and $z$ do not commute. Hence, if $y$ and $z$ do not commute in $A_\Lambda$ but their images $\phi(y)$ and $\phi(z)$ commute in $A_\Gamma$, $T_x^{[y,z]}$ is in $\FD(\varphi)$. This can happen as shown in Figure \ref{CommutatorTransvection}. Indeed, the commutator transvections $T_{x_i}^{[y_i,z_i]}$ for $i = 1,2$ are lifts of $T_x^{[y,z]}$ but since $y$ and $z$ are adjacent, the latter is just the identity.
\begin{figure}
\begin{center}
\begin{tikzpicture} 

\filldraw[black] (-4,-0.5) circle (2pt) ; 
\filldraw[black] (-4,-2) circle (2pt) node[anchor=west]{$x_1$}; 
\filldraw[black] (-5,0.5) circle (2pt) node[anchor=west]{$y_1$}; 
\filldraw[black] (-3,0.5) circle (2pt) node[anchor=east]{$z_1$}; 
\filldraw[black] (-3,1.8) circle (2pt) node[anchor=east]{$y_2$}; 
\filldraw[black] (-5,1.8) circle (2pt) node[anchor=west]{$z_2$}; 
\filldraw[black] (-4,2.8) circle (2pt) ; 
\filldraw[black] (-4,4.3) circle (2pt) node[anchor=west]{$x_2$}; 

\filldraw[black] (-2,2.1) circle (2pt) ; 
\filldraw[black] (-2.8,2.8) circle (2pt) ; 

\filldraw[black] (-5.8,0.2) circle (2pt) ; 
\filldraw[black] (-5.2,-0.5) circle (2pt) ; 

\draw (-4,-2) -- (-4,-0.5) -- (-5,0.5)-- (-5, 1.8) --(-4,2.8) --(-3,1.8) -- (-3,0.5) -- (-4,-0.5);
\draw (-4,2.8)--(-4,4.3);
\draw (-3,1.8) -- (-2,2.1) -- (-2.8,2.8) -- (-3,1.8);
\draw (-5,0.5) -- (-5.8,0.2) -- (-5.2,-0.5) -- (-5,0.5);

\filldraw[black] (4,1) circle (2pt) ; 
\filldraw[black] (4,-1) circle (2pt)node[anchor=west]{$x$} ;
\filldraw[black] (3,2.5) circle (2pt) node[anchor=north]{$y$};
\filldraw[black] (5,2.5) circle (2pt) node[anchor=west]{$z$};
\filldraw[black] (2,2.8) circle (2pt);
\filldraw[black] (2.8,3.5) circle (2pt);
\draw (4,1) -- (4,-1);
\draw (4,1) -- (3,2.5) -- (5,2.5) -- (4,1);
\draw (3,2.5)--(2,2.8)--(2.8,3.5)--(3,2.5);

\draw[-stealth]node[above] {$\varphi$}(-1.5,1) -- (1.5,1) ;

\end{tikzpicture}
\caption{An example of a commutator transvection $T_{x_i,[y_i,z_i]}$ which is a lift of the identity.}
\label{CommutatorTransvection}
\end{center}
\end{figure}
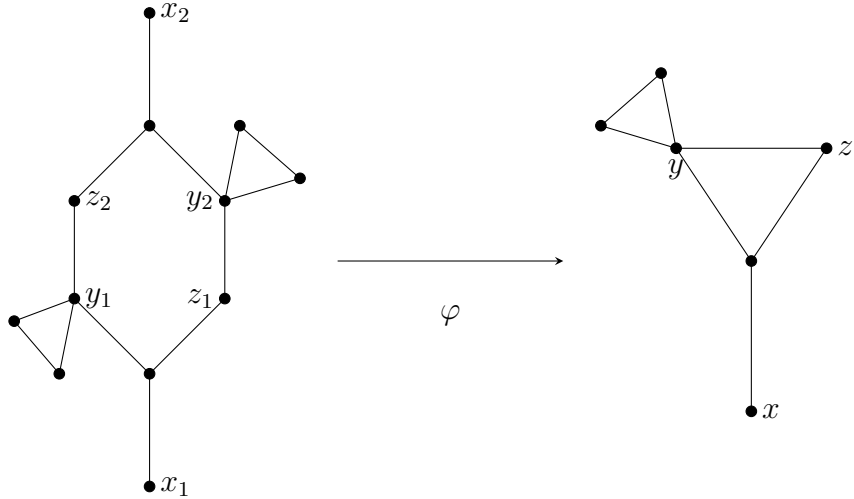

Except from the elements in $\Deck(\varphi)$, all other elements in $\FD(\varphi)$ that we listed so far act trivially on the first homology of $A_\Lambda$. We will prove that this is actually the case for the all group $\FD(\varphi)$. Before doing so, we need to establish a few lemmas and set up a few definitions.

Suppose that $\Gamma$ has no isolated vertices (and thus $\Lambda$ also has no isolated vertices). The identity of a RAAG will be denoted by $0$ to emphasize that it is an empty word.

\begin{lemma} \label{FarVertices}
Let $v_1,v_2 \in V\Gamma$ be two distinct vertices such that $d(v_1,v_2)\geq 3$. For an automorphism $f \in \Aut(A_\Gamma)$, if $v_1 \in \esupp(f(v_1))$, then $v_2 \notin \esupp(f(v_1))$.
\end{lemma}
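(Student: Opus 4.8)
The plan is to argue by contradiction. Suppose that both $v_1$ and $v_2$ lie in $\esupp(f(v_1))$, and let $x$ be a cyclically reduced word representing $f(v_1)$, so that $\supp(x)=\esupp(f(v_1))$ contains $v_1$ and $v_2$. The goal is to show that this configuration forces $v_1$ to be an isolated vertex, contradicting the standing assumption that $\Gamma$ has no isolated vertices.

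The engine of the argument is the rank computation of Lemma~\ref{lem:rank}. Since any automorphism carries the centralizer $C(v_1)$ isomorphically onto $C(f(v_1))$, we have $\rank(x)=\rank(f(v_1))=\rank(v_1)=|\st(v_1)|$, where the last equality is the observation recorded just after Theorem~\ref{thm:centralizer}. On the other hand, applying Lemma~\ref{lem:rank}\eqref{enum1:rank} to the vertex $v_1\in\supp(x)$ gives $|\st(v_1)|\geq\rank(x)$, so in fact equality $|\st(v_1)|=\rank(x)$ holds. This places us in the equality case Lemma~\ref{lem:rank}\eqref{enum2:rank}, and part \eqref{enumc:rank} then yields $v_1\lesssim v'$ for every $v'\in\supp(x)$; in particular $v_1\lesssim v_2$, that is, $\lk(v_1)\subseteq\st(v_2)$.

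The final step is a short combinatorial clash with the distance hypothesis, and it is here that the no-isolated-vertex assumption is used. From $d(v_1,v_2)\geq 3$ we read off that $v_1$ and $v_2$ are non-adjacent, so $v_2\notin\lk(v_1)$, and that they have no common neighbour, so $\lk(v_1)\cap\lk(v_2)=\emptyset$; together these give $\lk(v_1)\cap\st(v_2)=\emptyset$. Combined with the inclusion $\lk(v_1)\subseteq\st(v_2)$ obtained above, this forces $\lk(v_1)=\emptyset$, i.e. $v_1$ is isolated, which is the desired contradiction. I do not expect a serious obstacle here; the one point requiring care is confirming that it is precisely the hypothesis $v_1\in\esupp(f(v_1))$ (equivalently $v_1\in\supp(x)$) that activates the equality case of Lemma~\ref{lem:rank}, since dropping $v_1$ from $\supp(x)$ would break the chain $|\st(v_1)|=\rank(x)$ and with it the conclusion $v_1\lesssim v_2$.
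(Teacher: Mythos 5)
Your proof is correct and is essentially the paper's argument: the paper simply cites Proposition~\ref{prop:essupp_auto} to get $\esupp(f(v_1)) \subseteq V\Gamma_{\gtrsim v_1}$, and that proposition is itself derived exactly as you do, from the rank equality $\rank(f(v_1))=|\st(v_1)|$ and the equality case Lemma~\ref{lem:rank}\eqref{enumc:rank}. Your final contradiction (forcing $\lk(v_1)=\emptyset$) is just the contrapositive of the paper's closing observation that, absent isolated vertices, every vertex of $V\Gamma_{\gtrsim v_1}$ lies within distance $2$ of $v_1$.
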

\begin{proof}
By Proposition \ref{prop:essupp_auto}, we have $\esupp(f(v_1)) \subseteq V\Lambda_{\gtrsim v_1}$. Since $\Gamma$ has no isolated vertices, the distance between $v_1$ and any vertex in $V\Lambda_{\gtrsim v_1}$ is at most 2. Therefore, the fact that the distance between $v_1$ and $v_2$ is at least 3 proves the lemma.
\end{proof}

Let $\Lambda^+$ be the graph obtained from $\Lambda$ by adding an edge $\{x,y\}$ whenever $[\varphi(x), \varphi(y)]$ becomes trivial in $A_\Gamma$. In other words, $\{x,y\}$ is an edge in $\Lambda^+$ if either $\{\varphi(x),\varphi(y)\}$ is an edge of $\Gamma$ or $\varphi(x) = \varphi(y)$. 
The map $\varphi:\Lambda\to\Gamma$ can be extended to a map $\varphi':\Lambda^+\to\Gamma$ which also induces a surjective homomorphism $A_{\Lambda^+}\to A_\Gamma$.
We then have a map $\alpha \colon A_\Lambda \to A_{\Lambda^+}$ which is induced by the inclusion $\Lambda \to \Lambda^+$ and a map $\beta \colon A_{\Lambda^+} \to A_\Gamma$ which is induced by $\varphi'$. Moreover $\phi = \beta \circ \alpha$.

From now on, we say that two vertices $u$ and $u'$ in $\Lambda$ (or $\Lambda^+$) are \textit{deck-equivalent} if $\varphi(u) = \varphi(u')$ (since $\varphi$ is regular, $u$ and $u'$ are deck-equivalent if and only if there is a deck transformation $\sigma\in\Deck(\varphi)$ such that $\sigma(u)=u'$).
Let $x \in A_{\Lambda^+}$, and let $w=u_{1}^{\epsilon_1} \cdots u_{k}^{\epsilon_k}$ be a word representing $x$ where $u_{i} \in V\Lambda^+$ and $\epsilon_i \in \{1,-1\}$. An $\textit{exchange in w}$ is the operation consisting of replacing one of the $u_{i}^{\epsilon_i}$ by $\tilde{u}_{i}^{\epsilon_i}$ where $\tilde{u}_{i}$ is a vertex deck-equivalent to $u_{i}$.
\begin{figure}

\begin{center}
    \begin{tikzpicture}
    \filldraw[black] (4,0) circle (2pt) ;
    \filldraw[black] (4,2) circle (2pt) ;
    \filldraw[black] (6,2) circle (2pt) ;
    \filldraw[black] (6,0) circle (2pt) ;
    
    \draw (4,0) -- (4,2) -- (6,2) -- (6,0) -- (4,0);
    
    \filldraw[black] (1,1.6) circle (2pt) ;
    \filldraw[black] (1,0.4) circle (2pt) ;
    \filldraw[black] (0.2,-0.3) circle (2pt) ;
    \filldraw[black] (-1,-0.3) circle (2pt) ;
    \filldraw[black] (0.2,2.3) circle (2pt) ;
    \filldraw[black] (-1,2.3) circle (2pt) ;
    \filldraw[black] (-1.8,0.4) circle (2pt) ;
    \filldraw[black] (-1.8,1.6) circle (2pt) ;
    
    \draw (1,1.6) -- (1,0.4) -- (0.2,-0.3) -- (-1,-0.3) -- (-1.8,0.4) -- (-1.8,1.6) -- (-1,2.3) -- (0.2,2.3) -- (1,1.6);
    \draw (-3,0.6) node[above] {$\Lambda$} ;
    \draw (7,0.6) node[above] {$\Gamma$} ;
    \draw[-stealth](1.5,1) --node[above] {$\varphi$} (3.5,1) ;
    
    \filldraw[black] (4,-2.4) circle (2pt) ;
    \filldraw[black] (4,-3.6) circle (2pt) ;
    \filldraw[black] (3.2,-4.3) circle (2pt) ;
    \filldraw[black] (2,-4.3) circle (2pt) ;
    \filldraw[black] (3.2,-1.7) circle (2pt) ;
    \filldraw[black] (2,-1.7) circle (2pt) ;
    \filldraw[black] (1.2,-3.6) circle (2pt) ;
    \filldraw[black] (1.2,-2.4) circle (2pt) ;
    
    \draw (4,-2.4) -- (4,-3.6) -- (3.2,-4.3) -- (2,-4.3) -- (1.2,-3.6) --  (1.2,-2.4) -- (2,-1.7) -- (3.2,-1.7) -- (4,-2.4);
    \draw (4,-2.4) -- (2,-4.3);
    \draw (4,-2.4) -- (1.2,-3.6);
    \draw (4,-2.4) -- (1.2,-2.4);
    
    \draw (4,-3.6) -- (1.2,-3.6);
    \draw (4,-3.6) -- (2,-1.7);
    \draw (4,-3.6) -- (1.2,-2.4);
    
    \draw (3.2,-4.3) -- (1.2,-2.4);
    \draw (3.2,-4.3) -- (2,-1.7);
    \draw (3.2,-4.3) -- (3.2,-1.7);
    
    \draw (2,-4.3) -- (2,-1.7);
    \draw (2,-4.3) -- (3.2,-1.7);
    
    \draw (1.2,-3.6) -- (3.2,-1.7);
    \draw (0,-3.4) node[above] {$\Lambda^+$} ;
    
    \draw[-stealth](0.5,-0.6) -- (1.5,-1.5) ;
    \draw[-stealth](3.8,-1.5) --  (4.8,-0.6);
    
    \end{tikzpicture}
\caption{An example of a covering $\varphi \colon \Lambda \to \Gamma$ and of the associated graph $\Lambda^+$}
\end{center}

\end{figure}
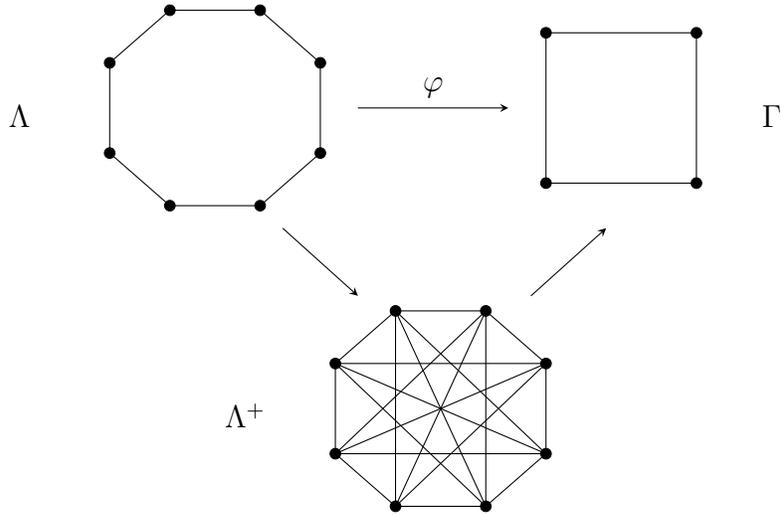

\begin{lemma} \label{ExchangeLemma}
Let $x \in A_{\Lambda^+}$ such that $\beta(x) = 0$, and let $w$ be a word representing $x$. Then, there exists a word $w'$, obtained from $w$ by a finite number of exchanges, such that $w'$ represents the trivial word in $A_{\Lambda^+}$.
\end{lemma}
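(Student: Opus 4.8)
The plan is to run the reduction of the image word in $A_\Gamma$ one cancellation at a time and to mirror it in $A_{\Lambda^+}$, exploiting the fact that $\Lambda^+$ was constructed exactly so that commuting relations downstairs lift to commuting relations upstairs, while exchanges are used to make the cancelling letters genuinely inverse in $A_{\Lambda^+}$. I would argue by induction on the word length $k$ of $w$. The base case $k=0$ is trivial, so assume $w=u_1^{\epsilon_1}\cdots u_k^{\epsilon_k}$ is nonempty with $\beta(x)=0$.

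First I invoke the standard cancellation criterion for the word problem in RAAGs: the letterwise image $\varphi(u_1)^{\epsilon_1}\cdots\varphi(u_k)^{\epsilon_k}$ is a nonempty word representing the identity of $A_\Gamma$, hence it is not reduced, so there are indices $i<j$ with $\varphi(u_i)=\varphi(u_j)=:v$ and $\epsilon_i=-\epsilon_j$ such that the $\varphi$-image of every letter strictly between positions $i$ and $j$ is adjacent to $v$ in $\Gamma$. In particular none of these intermediate letters projects to $v$.

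The key point is that this cancellation lifts. Since each intermediate letter has $\varphi$-image adjacent to $v=\varphi(u_i)$ in $\Gamma$, by the defining property of $\Lambda^+$ its underlying vertex is adjacent to $u_i$ there, and so commutes with $u_i$ in $A_{\Lambda^+}$. As $u_i$ and $u_j$ are deck-equivalent, I perform one exchange, replacing the letter $u_j^{\epsilon_j}$ by $u_i^{\epsilon_j}=u_i^{-\epsilon_i}$. In the resulting word the letters $u_i^{\epsilon_i}$ (position $i$) and $u_i^{-\epsilon_i}$ (position $j$) commute with everything between them in $A_{\Lambda^+}$, so shuffling them together and cancelling shows that this word equals, in $A_{\Lambda^+}$, the word $w''$ obtained by simply deleting positions $i$ and $j$.

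Now $w''$ is shorter, and its $\beta$-image is the original image word with a cancelling pair removed, hence still trivial; the induction hypothesis yields a finite sequence of exchanges turning $w''$ into a word trivial in $A_{\Lambda^+}$. These exchanges act on positions other than $i,j$, so together with the single exchange above they constitute a finite sequence of exchanges on $w$; since exchanges preserve $\varphi$-images, the shuffle-and-cancel step above remains valid after they are applied, and the resulting word $w'$ represents the trivial element of $A_{\Lambda^+}$. The only nontrivial input is the cancellation criterion for $A_\Gamma$ in the stated form, which is where the word problem for the base RAAG enters; the remainder is bookkeeping together with the tautological observation that adjacency in $\Gamma$ lifts to adjacency in $\Lambda^+$, which is precisely the reason $\Lambda^+$ was introduced.
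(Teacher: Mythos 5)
Your proof is correct and follows essentially the same route as the paper: both arguments mirror the reduction of the image word in $A_\Gamma$ one cancellation at a time, using the fact that commutation of generators in $A_{\Lambda^+}$ depends only on their $\varphi$-images (so shuffles downstairs lift) and a single exchange to turn each cancelling pair into genuine inverses upstairs. Your version merely packages the paper's shuffle-and-cancel argument more formally, as an induction on word length driven by the standard deletion condition for RAAGs.
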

\begin{proof}
Let $v,v' \in V\Gamma$. By the definition of $\Lambda^+$, we have that $[v,v'] = 0$ if and only if $[u, u'] = 0$ for all possible choices of vertices $u\in\varphi^{-1}(v)$ and $u'\in\varphi^{-1}(v')$. 
Write $w = u_{1}^{\epsilon_1} \cdots u_{k}^{\epsilon_k}$ where $u_{i} \in V\Lambda^+$ and $\epsilon_i \in \{1,-1\}$. Assume $\beta(w) =0$ but $w \neq 0$. But $\beta(w) =  \beta(u_{1}^{\epsilon_1}) \cdots \beta(u_{k}^{\epsilon_k}) = 0$. So it means that, after using commutations, some cancellations of the form $\beta(u_{i}^{\epsilon_i}) \beta(u_{m}^{\epsilon_m}) = 0$ occur. But every time such a cancellation appears, we can shuffle $w$ using the same commutations, to make $u_{i}^{\epsilon_i} u_{m}^{\epsilon_m}$ appear in $w$. But then, by replacing $u_{i}$, or $u_{m}$, by some deck-equivalent vertex $\tilde{u}_{i}$, or $\tilde{v}_{m}$, we can make the same cancellation appear. Since we can arrive to the trivial element from $\beta(u_{1}^{\epsilon_1}) \cdots \beta(u_{k}^{\epsilon_k})$ after finitely many such operations, the same is true for $w$.
\end{proof}

Denote the deck-equivalence classes of vertices of $\Lambda$ by $V_1, \cdots, V_k$. For each $V_i$, we can define a map $\Sigma_{V_i} \colon A_\Lambda \to \mathbb{Z}^n$ where $n$ is the degree of the covering map $\varphi$, or equivalently, the cardinality of any of the $V_i$. Let $V_i = \{u_{i_1}, \cdots, u_{i_n} \}$. 
The map  $\Sigma_{V_i}$ is then simply defined by $\Sigma_{V_i}(u_{i_j}) = (0, \cdots, 0 ,1, 0, \cdots,0)$ where the $1$ is at the $j$'th position and $\Sigma_{V_i}(u) = (0, \cdots,0)$ if $u \in V\Lambda\setminus V_i$. 
In other words, if $x \in A_\Lambda$ and $w$ is a word representing $x$, then $\Sigma_{V_i}$ counts all the signed occurrences of each $u \in V_i$ and records them in a vector. We then also have a map $\Sigma \colon A_\Lambda \to \mathbb{Z}^{nk}$ which is obtained by concatenating the images of all the $\Sigma_{V_i}$'s. We note in particular, that $\Sigma(x) = 0$ if and only if $\Sigma_{V_i}(x) = 0$ for all $i = 1,\cdots,k$. Finally, note that $\Sigma$ is nothing but the abelianization map and thus $\ker(\Sigma)$ is nothing but the commutator subgroup $(A_\Lambda)'$. In the following, we will abuse notations and also freely use the map $\Sigma$ and the maps $\Sigma_{V_i}$ on elements in $A_{\Lambda^+}$.

\begin{theorem} \label{CommensurableTorelli}
Suppose $\varphi:\Lambda\to\Gamma$ is a regular covering map of graphs without isolated vertices. Then the group $\FD(\varphi)$ is commensurable to a subgroup of $\IA(A_\Lambda)$. 
More precisely, $\FD(\varphi) / (\FD(\varphi) \cap \IA(A_\Lambda))$ is isomorphic to $\Deck(\varphi)$.
\end{theorem}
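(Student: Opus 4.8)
The plan is to route everything through the homological (abelianization) representation $H_\Lambda\colon\Aut(A_\Lambda)\to\GL(|\Lambda|,\Z)$, whose kernel is by definition $\IA(A_\Lambda)$. Restricting to $\FD(\varphi)$ one has $\FD(\varphi)\cap\IA(A_\Lambda)=\ker\bigl(H_\Lambda|_{\FD(\varphi)}\bigr)$, so the first isomorphism theorem gives $\FD(\varphi)/(\FD(\varphi)\cap\IA(A_\Lambda))\cong H_\Lambda(\FD(\varphi))$. Thus the theorem reduces to the single identity $H_\Lambda(\FD(\varphi))=H_\Lambda(\Deck(\varphi))$ together with the observation that $H_\Lambda|_{\Deck(\varphi)}$ is injective. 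The latter is immediate: a deck transformation acts on $\Z^{V\Lambda}$ by the permutation matrix of its action on vertices, and a deck transformation fixing every vertex is the identity. The inclusion $H_\Lambda(\Deck(\varphi))\subseteq H_\Lambda(\FD(\varphi))$ is clear since $\Deck(\varphi)\subseteq\FD(\varphi)$, and $\Deck(\varphi)$ is finite because $\varphi$ has finite degree; so once the reverse inclusion is proven, the quotient is isomorphic to $\Deck(\varphi)$ and, being finite, witnesses the claimed commensurability.

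For the reverse inclusion I would first reduce to essential automorphisms. The identity of $\Aut(A_\Gamma)$ is essential, and its lifts are exactly the elements of $\FD(\varphi)$; hence Corollary~\ref{lem:deck_lift} applies and, for any $F\in\FD(\varphi)$, furnishes a deck transformation $\mu$ such that $F\mu$ is an essential lift of the identity. Writing $F=(F\mu)\mu^{-1}$ with $F\mu\in\FD(\varphi)$ essential and $\mu^{-1}\in\Deck(\varphi)$, one gets $H_\Lambda(F)=H_\Lambda(F\mu)\,H_\Lambda(\mu)^{-1}$ with $H_\Lambda(\mu)^{-1}\in H_\Lambda(\Deck(\varphi))$. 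Therefore it suffices to prove the key claim: every essential $F\in\FD(\varphi)$ lies in $\IA(A_\Lambda)$, i.e. $H_\Lambda(F)=I$.

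To prove the key claim, fix $u\in V\Lambda$ with $\varphi(u)=v$, and let $x$ be a cyclically reduced word for $F(u)$, so $\Sigma(F(u))=\Sigma(x)$ and $\supp x=\esupp F(u)$. Since $F$ is an automorphism, $\rank(F(u))=|\st(u)|$, and since $F$ is essential, $u\in\esupp F(u)$; Lemma~\ref{lem:rank}\eqref{enumc:rank} then gives $u\lesssim w$ for every $w\in\esupp F(u)$, so that $\esupp F(u)\subseteq V\Lambda_{\gtrsim u}$. The crucial point, and the place where the absence of isolated vertices is used, is that $\esupp F(u)$ meets each fiber in at most one vertex: if $w\neq w'$ were two vertices of $\esupp F(u)$ with $\varphi(w)=\varphi(w')$, then $u\lesssim w$ and $u\lesssim w'$ would force $\lk(u)\subseteq\st(w)\cap\st(w')$, and picking any $\ell\in\lk(u)$ (non-empty as $u$ is not isolated) would produce a path of length at most $2$ between $w$ and $w'$, contradicting $d(w,w')\geq 3$ for distinct vertices of a common fiber. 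This is precisely the phenomenon behind Lemma~\ref{FarVertices} and Lemma~\ref{lem:simplicial_criterion}.

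Finally I would combine this support constraint with the defining relation $\phi\circ F=\phi$. Abelianizing gives $\overline{\phi}\circ H_\Lambda(F)=\overline{\phi}$, where $\overline{\phi}\colon\Z^{V\Lambda}\to\Z^{V\Gamma}$ is the per-fiber summation map induced by $\phi$; reading this off the $u$-column shows that the coordinates of $\Sigma(F(u))$ sum to $1$ over the fiber $\varphi^{-1}(v)$ and to $0$ over every other fiber. (The Exchange Lemma~\ref{ExchangeLemma} together with the auxiliary graph $\Lambda^+$ gives a hands-on way to see this per-fiber balancing by tracking signed fiber-occurrences under exchanges.) Since $\Sigma_{V_j}(F(u))$ is supported on $\esupp F(u)\cap V_j$, which has at most one vertex, each block is a single integer multiple of a basis vector whose coefficient equals the corresponding fiber-sum; for the fiber of $u$ this forces $\Sigma_{V_j}(F(u))=e_u$ (the unique vertex being $u$, with coefficient $1$), and for every other fiber it forces $\Sigma_{V_j}(F(u))=0$. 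Hence $\Sigma(F(u))=e_u$ for all $u$, i.e. $H_\Lambda(F)=I$, proving the key claim. Assembling the pieces yields $H_\Lambda(\FD(\varphi))=H_\Lambda(\Deck(\varphi))\cong\Deck(\varphi)$, whence $\FD(\varphi)/(\FD(\varphi)\cap\IA(A_\Lambda))\cong\Deck(\varphi)$; as $\Deck(\varphi)$ is finite, $\FD(\varphi)\cap\IA(A_\Lambda)$ is a finite-index subgroup of $\FD(\varphi)$ contained in $\IA(A_\Lambda)$, giving the commensurability. I expect the main obstacle to be the support dichotomy of the third paragraph: the whole argument hinges on forbidding two distinct vertices of one fiber from appearing in a single essential support, which is exactly what fails when isolated vertices are present.
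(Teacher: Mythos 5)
Your proof is correct, and its skeleton matches the paper's: both reduce to essential lifts of the identity via Corollary~\ref{lem:deck_lift}, and both show such lifts lie in $\IA(A_\Lambda)$ by combining two constraints on $\Sigma(F(u))$ --- that the essential support of $F(u)$ meets each fiber in at most one vertex (your distance argument through a vertex of $\lk(u)$ is exactly the mechanism behind Lemma~\ref{FarVertices}, and is where the no-isolated-vertices hypothesis enters), and that the coordinates of $\Sigma(F(u))$ have prescribed sums over each fiber. Where you genuinely diverge is in how the second constraint is obtained: the paper builds the auxiliary graph $\Lambda^+$, factors $\phi$ as $\beta\circ\alpha$, and proves the Exchange Lemma~\ref{ExchangeLemma} so as to track signed fiber-occurrences under word rewriting, whereas you get the per-fiber balancing in one line from functoriality of abelianization applied to $\phi\circ F=\phi$ (i.e.\ $\overline{\phi}\circ H_\Lambda(F)=\overline{\phi}$, read off on the $u$-column). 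This is a cleaner and strictly more elementary route that makes the $\Lambda^+$/Exchange-Lemma machinery unnecessary for this theorem. A second, smaller difference: the paper's written proof stops once $\Sigma(u^{-1}F(u))=0$ is established for essential $F$, leaving the ``more precisely'' isomorphism $\FD(\varphi)/(\FD(\varphi)\cap\IA(A_\Lambda))\cong\Deck(\varphi)$ implicit, while you carry out that bookkeeping explicitly (first isomorphism theorem for $H_\Lambda|_{\FD(\varphi)}$, the decomposition $F=(F\mu)\mu^{-1}$, and injectivity of $H_\Lambda$ on $\Deck(\varphi)$ via permutation matrices); in that respect your write-up is the more complete of the two.
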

\begin{proof}
Let $F \in \FD(\varphi)$. By Corollary~\ref{lem:deck_lift}, there exists $\mu \in \Deck(\varphi)$ such that $F\mu$ is essential. For simplifying notation, we will thus now suppose that $F$ is essential for the rest of the proof. 

Our goal is to show that $\Sigma(u^{-1}F(u)) = 0$ for all $u \in V\Lambda$. Indeed, if that were to be true, that would mean that $u$ and $F(u)$ represent the same element in $H_1(A_\Lambda)$ for each $u \in V\Lambda$, so that $F$ indeed acts trivially on $H_1(A_\Lambda)$.

By abuse of notation, we consider $F(u)$ as an element of $A_\Lambda$ and as a reduced word at the same time. Let $x(u)$ be a word such that $c(u) = x(u)^{-1} F(u) x(u)$ is a cyclically reduced word. Then,
\begin{enumerate}
    \item $u \in \supp(c(u))$
    \item\label{123} if $w\in \supp(c(u))$, then $w' \notin \supp(c(u))$ for any $w'$ deck-equivalent to $w$.
\end{enumerate}
Note that item \noindent\eqref{123} holds by Lemma \ref{FarVertices} and the fact that any two distinct vertices in the inverse image of a vertex under a covering map of graphs are at distance $\geq 3$.
Let $K$ be $\ker(\phi \colon A_\Lambda \to A_\Gamma)$. Since $F$ is a lift of the identity, we have that $u^{-1}F(u) \in K$. Since $F(u)$ can be represented by the word $x(u) c(u) x^{-1}(u)$, the word $w = u^{-1} x(u) c(u) x^{-1}(u)$ represents $u^{-1}F(u)$. 
Note that $w$ is also a word representing $\alpha(u^{-1}F(u))$ in $A_{\Lambda^+}$ via the map $\alpha:\Lambda\rightarrow\Lambda^+$, and $\beta(w)=0$.

Without loss of generality, suppose that $u \in V_1$, where $V_1, \cdots,V_k$ is the ordered set of deck-equivalent vertices of $\Lambda$ (for each $V_i$, we can give any order on vertices). For simplicity, we also suppose that $u$ is the first element in $V_1$. We want this to show that $\Sigma_{V_i}(w) = 0$ for all $i =1,\cdots, k$.

\begin{enumerate}
    \item[(I)]\label{I} Suppose $i \neq 1$. Then $$\Sigma_{V_i}(u^{-1} x(u) c(u) x^{-1}(u)) = \Sigma_{V_i}( x(u) c(u) x^{-1}(u))= \Sigma_{V_i}(c(u)).$$
    If $\Sigma_{V_i}(c(u)) = 0$, then $\Sigma_{V_i}(w)=0$. Otherwise, item \eqref{123} implies that $\Sigma_{V_i}(c(u))= (0,\cdots,0,m,0,\cdots,0)$ for some integer $m \neq 0$. But, by Lemma \ref{ExchangeLemma}, we can modify the word $w$ by a finite sequence of exchanges in order to get to the trivial word. But if $w'$ is a word obtained from $w$ by a finite number of exchanges, the sum of the coordinates of $\Sigma_{V_i}(w)$ is equal to the sum of the coordinates of $\Sigma_{V_i}(w')$. Indeed, an exchange is only replacing a $u_i^{\epsilon}$ by $u_j^{\epsilon}$ where $u_i$ and $u_j$ are deck-equivalent, and both of $u_i^{\epsilon}$ and $u_j^{\epsilon}$ add $\epsilon$ to the sum of the coordinates of $\Sigma_{V_i}$.  Hence, that sum must remain equal to $m$, which contradicts the fact that we can reduce $w$ to the trivial word. So $\Sigma_{V_i}(w) = 0$.
    \item[(II)] Suppose $i = 1$. Since $u \in \supp (c(u))$, we have $$\Sigma_{V_1}(u^{-1}x(u) c(u) x^{-1}(u)) = \Sigma_{V_1} (u^{-1} c(u)).$$ But then, $ \Sigma_{V_1} (u^{-1} c(u)) = (m,0,\cdots,0)$. The same argument used in case \noindent\eqref{I} shows that $\Sigma_{V_1}(w) = 0$.
\end{enumerate}
This thus shows that $\Sigma (w) = 0$ and concludes the proof.
\end{proof}

As a corollary of Theorem \ref{CommensurableTorelli}, we can obtain a short exact sequence in the spirit of the one coming from the Birman-Hilden theory for surfaces.

\begin{corollary} \label{cor:Birman-Hilden}
For the regular covering map $\varphi: \Lambda \to \Gamma$ given in Theorem~\ref{CommensurableTorelli}, the following short exact sequences are exact:
$$1 \to \FD(\varphi) \to \FAut(\varphi) \to \LAut(\varphi) \to 1$$ and $$1 \to \Deck(\varphi) \to \FAut(\varphi)/(\FAut(\varphi) \cap \IA(A_\Lambda)) \to \LAut(\varphi)/(\LAut(\varphi) \cap \IA(A_\Gamma)) \to 1$$
\end{corollary}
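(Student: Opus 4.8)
The first sequence is immediate from the definitions: $\LAut(\varphi)$ consists precisely of the automorphisms admitting a lift in $\FAut(\varphi)$, so $\Phi$ is onto, and $\FD(\varphi)=\ker\Phi$ by definition. My plan for the second sequence is to realize it as the image of the first under the abelianization (homological) representations $H_\Lambda$ and $H_\Gamma$. Write $\phi_*\colon H_1(A_\Lambda)\to H_1(A_\Gamma)$ for the surjection induced by $\phi$; the relation $f\circ\phi=\phi\circ F$ for a lift $F$ of $f$ descends to the intertwining $\phi_*\circ F_*=f_*\circ\phi_*$. First I would record two consequences of this, using that $\phi_*$ is onto: (i) if $F\in\FAut(\varphi)\cap\IA(A_\Lambda)$ then $F_*=\mathrm{id}$ forces $f_*=\mathrm{id}$, i.e.\ $\Phi(F)\in\LAut(\varphi)\cap\IA(A_\Gamma)$; and (ii) $F_*$ preserves $\ker\phi_*$. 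Consequence (i) shows that $\Phi$ descends to a homomorphism
$$\bar\Phi\colon \FAut(\varphi)/(\FAut(\varphi)\cap\IA(A_\Lambda))\longrightarrow \LAut(\varphi)/(\LAut(\varphi)\cap\IA(A_\Gamma)),$$
which is surjective because $\Phi$ is. Identifying these quotients with the homological images $H_\Lambda(\FAut(\varphi))$ and $H_\Gamma(\LAut(\varphi))$ via the first isomorphism theorem, $\bar\Phi$ is exactly the map $F_*\mapsto f_*$.

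Next I would set up the injection $\Deck(\varphi)\hookrightarrow \FAut(\varphi)/(\FAut(\varphi)\cap\IA(A_\Lambda))$ by $\mu\mapsto[\mu]$. It is well defined since $\Deck(\varphi)\subseteq\FD(\varphi)$, it lands in $\ker\bar\Phi$ since $\Phi(\mu)=\mathrm{id}$, and it is injective because a nontrivial deck transformation permutes the fibres nontrivially and hence acts as a nontrivial permutation matrix on $H_1(A_\Lambda)$; in particular $\Deck(\varphi)\cap\IA(A_\Lambda)=1$. It therefore remains to prove exactness in the middle, namely that $\ker\bar\Phi$ is exhausted by the classes $[\mu]$.

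Now the heart of the argument. Setting $N:=\Phi^{-1}(\IA(A_\Gamma))$, one checks $N\cap\IA(A_\Lambda)=\FAut(\varphi)\cap\IA(A_\Lambda)$, so that $\ker\bar\Phi=N/(N\cap\IA(A_\Lambda))\cong H_\Lambda(N)$, and I must show $H_\Lambda(N)=\{\mu_*:\mu\in\Deck(\varphi)\}$. Fix $F\in N$ and put $f=\Phi(F)\in\IA(A_\Gamma)$. The key sublemma, a strengthening of Theorem~\ref{CommensurableTorelli} from the identity to arbitrary Torelli elements, is that \emph{every essential lift of an automorphism in $\IA(A_\Gamma)$ lies in $\IA(A_\Lambda)$}. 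Granting this, note that $f$ is essential (since $f_*=\mathrm{id}$, the exponent sum of $v$ in $f(v)$ is $1$, so $v$ survives in any cyclically reduced representative), whence Corollary~\ref{lem:deck_lift} yields $\mu\in\Deck(\varphi)$ with $F\mu$ essential; the sublemma gives $F\mu\in\IA(A_\Lambda)$, so $F_*=(\mu^{-1})_*$ is a deck-permutation matrix. This gives $H_\Lambda(N)\subseteq\{\mu_*\}$, and the reverse inclusion is clear from $\Deck(\varphi)\subseteq N$, completing the identification and hence the proof.

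To prove the sublemma I would argue directly on homology rather than rerun the exchange-lemma computation of Theorem~\ref{CommensurableTorelli}, since for $f\neq\mathrm{id}$ the word $u^{-1}F(u)$ no longer lies in $\ker\phi$ (its $\phi$-image is the generally nontrivial commutator $v^{-1}f(v)$), so Lemma~\ref{ExchangeLemma} does not apply verbatim. Instead, from $f_*=\mathrm{id}$ we obtain $\phi_*\circ F_*=\phi_*$, which says that for each $u\in V\Lambda$ the total signed count of the fibre $\varphi^{-1}(\bar v)$ appearing in $F(u)$ equals the coefficient of $\bar v$ in $\varphi(u)$, namely $1$ when $\bar v=\varphi(u)$ and $0$ otherwise. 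On the other hand, essentiality of $F$ gives $\esupp(F(u))\subseteq V\Lambda_{\gtrsim u}$ by Proposition~\ref{prop:essupp_auto}, and exactly as in item (2) of the proof of Theorem~\ref{CommensurableTorelli} (via Lemma~\ref{FarVertices} and the fact that deck-equivalent vertices are at distance $\geq 3$) the essential support of $F(u)$ meets each fibre in at most one vertex. Combining the two, each fibre contributes to $F_*[u]$ through a single vertex whose coefficient must equal the prescribed total: for the fibre of $u$ that vertex is $u$ itself with coefficient $1$, and for every other fibre the coefficient is $0$, so $F_*[u]=[u]$ for all $u$ and $F\in\IA(A_\Lambda)$. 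The main obstacle is precisely this sublemma: transferring the trivial-homology conclusion of Theorem~\ref{CommensurableTorelli} to lifts of nontrivial Torelli elements, where the cancellation bookkeeping of the original proof breaks down and must be replaced by the fibrewise homological count above.
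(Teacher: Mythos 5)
Your proof is correct, and in its formal skeleton (the first sequence being definitional; well-definedness and surjectivity of $\bar\Phi$; injectivity of $\Deck(\varphi)$ in the quotient via $\Deck(\varphi)\cap\IA(A_\Lambda)=1$) it runs parallel to the paper's. Where you genuinely diverge is at exactness in the middle. The paper passes to the quotients $\FAut(\varphi)\IA(A_\Lambda)/\IA(A_\Lambda)$ and $\LAut(\varphi)\IA(A_\Gamma)/\IA(A_\Gamma)$, asserts that exactness of the induced sequence with kernel term $\FD(\varphi)\IA(A_\Lambda)/\IA(A_\Lambda)$ is ``an easy exercise'', and then concludes by the second isomorphism theorem together with Theorem~\ref{CommensurableTorelli}. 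That ``exercise'' is, however, exactly the statement that every $f\in\LAut(\varphi)\cap\IA(A_\Gamma)$ admits a lift lying in $\IA(A_\Lambda)$, and this does not follow formally from Theorem~\ref{CommensurableTorelli}, which controls only lifts of the identity. Your sublemma (every essential lift of a Torelli automorphism is Torelli) is precisely this missing content, and your proof of it is sound: essentiality plus Proposition~\ref{prop:essupp_auto} and the distance-at-least-$3$ property of fibres shows that $\esupp(F(u))$ meets each fibre in at most one vertex, and the intertwining $\phi_*\circ F_*=\phi_*$ then forces $F_*[u]=[u]$ fibre by fibre; your observation that Lemma~\ref{ExchangeLemma} cannot be invoked verbatim (since $u^{-1}F(u)\notin\ker\phi$ when $f\neq\mathrm{id}$) is also correct, and your homological count is a legitimate replacement. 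So the two arguments differ in substance at the crux: the paper's is shorter on the page only because it leaves this step unproved, while yours is self-contained and, as a by-product, strengthens Theorem~\ref{CommensurableTorelli} from lifts of the identity to essential lifts of arbitrary Torelli elements.
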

\begin{proof}




Let $q \colon \FAut(\varphi) \to \LAut(\varphi)$ be the quotient map. Recall that if $G$ is a group and $N$ is a normal subgroup of $G$, then $GN = \{g \cdot n \mid g \in G, n \in N \}$ is a subgroup of $G$. We claim that there is a well defined induced surjective homomorphism 
$$
q_* \colon \dfrac{\FAut(\varphi) \IA(A_\Lambda)}{\IA(A_\Lambda)} \to \dfrac{\LAut(\varphi) \IA(A_\Gamma)}{\IA(A_\Gamma)}.
$$
This homomorphism $q_*$ is simply defined by setting, for $F \in \FAut(\varphi)$ and $I \in \IA(A_\Lambda)$ $q_*((F \cdot I)\IA(A_\Lambda)) := (q(F)\cdot e) \IA(A_\Gamma)$, where $e$ is the identity element in $\IA(A_\Gamma)$.
Since $(F \cdot I)(F' \cdot I') = (FF' \cdot ((F')^{-1}IF'I')$ and $\IA(A_\Lambda)$ is a normal subgroup, the map $q_*$ is indeed a homomorphism. Then, it is an easy exercise to check that the following sequence is exact

$$
1 \to \dfrac{\FD(\varphi) \IA(A_\Lambda) }{\IA(A_\Lambda)} \to \dfrac{\FAut(\varphi) \IA(A_\Lambda)}{ \IA(A_\Lambda)}\to \dfrac{\LAut(\varphi) \IA(A_\Gamma)}{\IA(A_\Gamma)} \to 1
$$
Therefore, by applying the second isomorphism theorem for groups, and the last part of Theorem \ref{CommensurableTorelli}, we obtain the desired short exact sequence of the statement of this corollary.
\end{proof}

\section{Lifting of transvections revisited}\label{sec:lift_transv}

In this section, we come back to the combinatorial lifting criterion for transvections of Lemma \ref{lem:lift_transvection} and show that it is in fact a strict condition for liftability when $\Gamma$ has no isolated vertices.

Suppose $\varphi:\Lambda\rightarrow\Gamma$ is a regular covering map of graphs without isolated vertices. Let $m$ and $n$ be the number of vertices of $\Gamma$ and the degree of the covering map $\varphi$, respectively.
Let $u$ and $u'$ be two vertices in $\Lambda$ which are not comparable under the link-star order. 
Lemma \ref{OrderOnV} implies that $V\Lambda$ is an ordered set but this order is not uniquely determined. Moreover, an order on $V\Lambda$ can be suitably chosen so that $u$ comes before $u'$. 

\begin{lemma} \label{ordering1}
It is possible to label the vertices of $V\Lambda = \{u_1, \cdots, u_{mn}\}$ in such a way that 
\begin{enumerate}
    \item if $u_i \gnsim u_j$, then $i < j$
    \item if $u = u_i$ and  $u' = u_j$ in this ordering, then $i < j$.
\end{enumerate}
\end{lemma}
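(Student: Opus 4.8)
The plan is to regard condition (1) as asking for a linear extension of a finite strict partial order, and condition (2) as one additional comparability constraint to be grafted onto it. Recall from the proof of Lemma~\ref{OrderOnV} that the strict relation $\gnsim$ on $V\Lambda$ is transitive (and it is irreflexive and asymmetric by definition of the strict part of a preorder), hence it is a strict partial order. A labeling meeting condition (1) is exactly a linear extension of $\gnsim$ in which $\gnsim$-larger vertices receive smaller indices, and Lemma~\ref{OrderOnV} already produces one by repeatedly removing maximal elements. So the only new work is to guarantee that $u$ is placed before $u'$.

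First I would augment the relation: define $R$ on $V\Lambda$ by setting $a\, R\, b$ whenever $a \gnsim b$, and adding the single extra pair $u\, R\, u'$. The goal is then to check that the transitive closure $\overline{R}$ of $R$ is still a strict partial order, i.e.\ is irreflexive (contains no directed cycle). Once this is known, any linear extension of $\overline{R}$ — obtained exactly as in Lemma~\ref{OrderOnV}, by iteratively choosing and deleting a $\overline{R}$-maximal vertex — yields a bijective labeling $V\Lambda=\{u_1,\dots,u_{mn}\}$ with $a\,\overline{R}\,b \Rightarrow (\text{index of }a)<(\text{index of }b)$; since $\gnsim\,\subseteq\,\overline{R}$ this gives (1), and since $u\,R\,u'$ gives $u\,\overline{R}\,u'$ this gives (2).

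The key step, and the only place the hypothesis that $u$ and $u'$ are incomparable is used, is the acyclicity of $\overline{R}$. Since $\gnsim$ alone is acyclic, any directed cycle in $\overline{R}$ must traverse the new edge $u\to u'$. Following the cycle from $u'$ around to the next occurrence of $u$, every arrow on this segment is a $\gnsim$-relation (the only non-$\gnsim$ arrow starts at $u$), so by transitivity of $\gnsim$ we obtain $u'\gnsim u$, hence $u'\gtrsim u$. This contradicts the assumption that $u$ and $u'$ are not comparable under the link-star order, so no cycle exists.

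I do not anticipate a genuine obstacle here: the statement is essentially a topological-sorting fact, and the proof is short. The one point that warrants care is precisely the acyclicity argument above, where one must verify that incomparability of $u$ and $u'$ is exactly what prevents the single added constraint from closing a cycle; everything else follows from the transitivity of $\gnsim$ already recorded in Lemma~\ref{OrderOnV}.
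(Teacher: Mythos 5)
Your proof is correct, but it takes a different route from the paper. The paper's proof is a two-block decomposition: it sets aside the down-set $V\Lambda_{\lesssim u'}$ (the vertices $\lesssim u'$ in the link-star order), orders its complement by Lemma~\ref{OrderOnV}, orders $V\Lambda_{\lesssim u'}$ itself by Lemma~\ref{OrderOnV}, and concatenates. Since $u \not\lesssim u'$ (incomparability), $u$ lands in the first block while $u'$ lies in the second, giving condition (2); condition (1) across the two blocks holds because the down-set is closed downward under $\lesssim$ (if $a \lesssim u'$ and $a \gnsim b$, then $b \lesssim u'$ by transitivity), a check the paper leaves implicit. Your proof instead augments the strict order $\gnsim$ by the single pair $(u,u')$, proves the transitive closure is still acyclic — this is exactly where incomparability enters, since a cycle would force $u' \gnsim u$ — and then topologically sorts. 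The two arguments use incomparability at dual places: the paper uses it to ensure $u$ avoids the down-set, you use it to rule out a cycle. Your version is slightly longer but more robust: it generalizes verbatim to grafting any finite set of extra constraints onto $\gnsim$ as long as the augmented relation stays acyclic, whereas the concatenation trick is tailored to a single pair. Both reduce cleanly to Lemma~\ref{OrderOnV}, so either is acceptable here.
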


\begin{proof}
Let $V\Lambda_{\lesssim u'}$ be the set of vertices that are smaller than or equivalent to $u$ in the link-star order sense. Then, we can order the vertices of $V\Lambda - V\Lambda_{\lesssim u'}$ in such a way that $(1)$ holds by applying Lemma \ref{OrderOnV}. Now, we do the same thing for ordering the vertices of $V\Lambda_{\lesssim u'}$. Concatenating these two orders gives us a desired ordering of $V\Lambda$.
\end{proof}

Recall that we say that two vertices $u_i$ and $u_j$ of $\Lambda$ are deck-equivalent, if there exists $\sigma \in \Deck(\varphi)$ such that $u_j = \sigma(u_i)$.

\begin{lemma} \label{ordering2}
It is possible to label the vertices of $V\Lambda = \{u_1, \cdots, u_{mn}\}$ in such a way that 
\begin{enumerate}
    \item if $u_i \gnsim u_j$, then $i < j$,
    \item if $u = u_i$ and  $u' = u_j$ in this ordering, then $i < j$, and
    \item if $u_i$ and $u_j$ are deck-equivalent, with $i \leq j$, then every vertex $u_k$ such that $i \leq k \leq j $ is also deck-equivalent to $u_i$ and $u_j$.
\end{enumerate}
\end{lemma}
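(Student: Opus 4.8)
The plan is to refine the ordering produced by Lemma~\ref{ordering1} so that each deck-equivalence class occupies a single contiguous block, exploiting that deck transformations are graph automorphisms and hence preserve the link-star order. First I would record the two structural consequences of this. Since $\varphi$ has no isolated vertices, Lemma~\ref{lem:simplicial_criterion} shows that distinct deck-equivalent vertices are incomparable, so each deck-equivalence class $V_i=\varphi^{-1}(v)$ is an antichain for $\lesssim$; and because every $\sigma\in\Deck(\varphi)$ preserves $\lesssim$ (and $\gnsim$), the deck group permutes the strata of the order, so that maximality, and more generally the entire order relation, is constant along fibres.

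Next I would pass to the induced relation on deck-equivalence classes. Define $V_i\Rightarrow V_j$ to mean that $a\gnsim b$ for some $a\in V_i$ and $b\in V_j$. Regularity upgrades a single such relation to a fibre-wide one: applying deck transformations to $a\gnsim b$ shows that for every $a'\in V_i$ there is $b'\in V_j$ with $a'\gnsim b'$, and dually that every element of $V_j$ is dominated by some element of $V_i$. I would then show that $\Rightarrow$ generates no directed cycle: a cycle $V_1\Rightarrow\cdots\Rightarrow V_1$ can, using the fibre-wide form, be realized by a single descending chain $a_1\gnsim a_2\gnsim\cdots\gnsim a_{\ell+1}$ with $a_1,a_{\ell+1}\in V_1$, and transitivity of $\gnsim$ (as in the proof of Lemma~\ref{OrderOnV}) then forces $a_1\gnsim a_{\ell+1}$, contradicting either strictness (if $a_1=a_{\ell+1}$) or the antichain property of $V_1$. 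Hence the transitive closure $\prec$ of $\Rightarrow$ is a strict partial order on the set of classes. Listing the classes according to any linear extension of $\prec$, and ordering the vertices inside each (antichain) class arbitrarily, then produces an ordering of $V\Lambda$ satisfying (1) --- if $u_i\gnsim u_j$ then $V(u_i)\Rightarrow V(u_j)$, so $V(u_i)$ precedes $V(u_j)$, while inside a class no strict relation occurs --- and (3), since each class is a single contiguous block. This already does most of the work, and one checks it is compatible with the two-block split used in Lemma~\ref{ordering1}.

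The hard part will be arranging condition (2), namely $u$ before $u'$, on top of (1) and (3). If $V(u)=V(u')$ this is immediate, as $u$ and $u'$ then lie in the same antichain block and may be ordered freely. If $V(u)\neq V(u')$, I would choose the linear extension of $\prec$ so that $V(u)$ precedes $V(u')$; by construction this is possible exactly when $V(u')\not\prec V(u)$. The crux is therefore to exclude a descending chain $V(u')\prec V(u)$. I would attack this through the incomparability of $u$ and $u'$ together with regularity: projecting such a chain by Lemma~\ref{lem:cover-preserving} forces $\varphi(u)\lesssim\varphi(u')$ in $\Gamma$, while the fibre-wide form of $\Rightarrow$ yields a lift of $\varphi(u')$ strictly dominating $u$, and it is the all-or-nothing behaviour of fibre relations under the transitive deck action that must be leveraged to show this cannot coexist with the hypotheses on the pair $(u,u')$ coming from the surrounding setup of Section~\ref{sec:lift_transv}. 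This compatibility step --- proving that the class poset never forces $V(u')$ ahead of $V(u)$ --- is the only delicate point, and is precisely where both the regularity of $\varphi$ and the absence of isolated vertices are genuinely used; once it is in place, assembling the block ordering is routine bookkeeping.
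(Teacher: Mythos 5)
Your architecture is the same as the paper's (a preorder on deck-equivalence classes, upgraded fibre-wide by regularity, then a linear extension blown up to $V\Lambda$), and your treatment of transitivity, acyclicity, and conditions (1) and (3) is in fact more explicit than the paper's. The problem is the step you yourself flag as the crux and leave unproven: excluding that the class order forces $V(u')$ ahead of $V(u)$. The attack you sketch cannot succeed from the hypothesis you are working with. Projecting a relation $\tau(u')\gnsim u$ (for $\tau\in\Deck(\varphi)$) through Lemma~\ref{lem:cover-preserving} only yields $\varphi(u)\lesssim\varphi(u')$ in the link-star order of $\Gamma$, and that conclusion is perfectly compatible with $u$ and $u'$ being incomparable in $\Lambda$, so no contradiction follows. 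Worse, under the literal running hypothesis of Section~\ref{sec:lift_transv} (only that $u$ and $u'$ are incomparable) the configuration to be excluded genuinely occurs, so the statement cannot be proved as posed: in Figure~\ref{figure:NotLift} take $u=u_4$ and $u'=u_3'$. These two vertices are incomparable (they are far apart and not isolated), yet $u_3\gnsim u_4$ and $u_3'\gnsim u_4'$, so conditions (1) and (3) force the whole block $\{u_3,u_3'\}$ to precede the block $\{u_4,u_4'\}$; hence $u_3'$ precedes $u_4$ and condition (2) fails for this pair.

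What actually closes the gap is the stronger, fibre-wise hypothesis available where the lemma is used (Corollary~\ref{lem:lift_transvection2}): there one assumes $\varphi(u)$ and $\varphi(u')$ are not comparable under the suborder $\lesssim_\varphi$. With that input your exclusion is a one-liner: if $\tau(u')\gnsim u$ for some $\tau\in\Deck(\varphi)$, then your own fibre-wide upgrade shows that every vertex of $\varphi^{-1}(\varphi(u))$ is dominated by some vertex of $\varphi^{-1}(\varphi(u'))$, i.e.\ $\varphi(u)\lesssim_\varphi\varphi(u')$, a contradiction. So the missing ingredient is not a new ``all-or-nothing'' property to be discovered; it is that the projection argument must land in $\lesssim_\varphi$ rather than in the link-star order of $\Gamma$, which requires importing the corollary's hypothesis in place of the one stated before Lemma~\ref{ordering1}. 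To be fair to you, the paper's own proof silently does exactly this: it orders the classes ``as in Lemma~\ref{ordering1}'' with $[u]$ before $[u']$, which is legitimate only when $[u]$ and $[u']$ are incomparable in the class preorder, i.e.\ only under the fibre-wise hypothesis. Your instinct about where the delicacy lies is sharper than the paper's exposition, but your proposal neither proves the step nor could it, as stated.
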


\begin{proof}
We make the set of deck-equivalence classes of vertices of $\Lambda$ into a pre-ordered set $E$ by setting $[x] \gtrsim [y]$ if and only if for some vertex $u_i \in [x]$ and $u_j \in [y]$ we have $u_i \gtrsim u_j$. Then, we order the set $E$ as in Lemma \ref{ordering1}, in such a way that the equivalence class corresponding to $u$ comes before the one corresponding to $u'$. Now, we just blow up each equivalence class to obtain an ordering of $V\Lambda$ that has all the desired properties. Note that this works because the symmetries of $\Lambda$ preserve the link-star order, so that if two vertices are deck-equivalent, they are either equivalent or non comparable in the link-star order.
\end{proof}



Let $F \in \FAut(\varphi)$ be a lift of $f \in \LAut(\varphi)$. We investigate the relation between the action of $f$ on $H_1(A_\Gamma)$ and the action of $F$ on $H_1(A_\Lambda)$. 
In order to denote the action of an automorphism on the first homology by matrices, we need to fix an order on the generating set. Thus, we fix an order on $V\Lambda$ obtained from Lemma \ref{ordering2} so that the deck-equivalence classes of $V\Lambda$ in this order are denoted by $V_1,\cdots,V_m$. 
We also fix the order on $V\Gamma$ which is induced from $V\Lambda$ via the covering map $\varphi$. Note that the induced order on $V\Gamma$ may not satisfy the condition in Lemma \ref{OrderOnV}.

Let $M$ be an $m \times m$ square matrix and let $\tilde{M}$ be an $nm \times nm$ square block matrix with block of size $n\times n$. We say that $\tilde{M}$ is a \textit{blow up} of $M$ if for every $i,j = 1, \cdots, m$, the $(i,j)$ block of $\tilde{M}$ is a matrix in which every column consists only of 0 entries except for one entry which is equal to $M(i,j)$.

The remaining facts of this section are based on Lemma \ref{FarVertices}, and thus, from now on, we assume that $\Gamma$ (and also $\Lambda$) has no isolated vertices.

\begin{lemma}
Let $F \in \FAut(\varphi)$ be a lift of $f \in \LAut(\varphi)$. Then, under an appropriate ordering of the vertices of $\Lambda$ and $\Gamma$, the matrix $\tilde{M}$ corresponding to the action of $F $ on $H_1(A_\Lambda)$ is a blow up of the matrix $M$ corresponding to the action of $f$ on $H_1(A_\Gamma)$. 
\end{lemma}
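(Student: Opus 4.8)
The plan is to compute both matrices with respect to the ordering of $V\Lambda$ fixed above (coming from Lemma~\ref{ordering2}) and the induced ordering of $V\Gamma$, so that the deck--equivalence classes $V_1,\dots,V_m$ form consecutive blocks of size $n$ and $\varphi(V_i)=w_i$ for the $i$-th vertex $w_i$ of $\Gamma$. With these orderings the homology map $\phi_*\colon H_1(A_\Lambda)\to H_1(A_\Gamma)$ sends every basis vector in $V_i$ to $w_i$. First I would reduce to the case where $F$ is essential: by Corollary~\ref{lem:deck_lift} there is a deck transformation $\mu$ with $F\mu$ essential, and $F\mu$ is again a lift of $f$. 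Since $\mu$ permutes each fibre $V_i$ within itself, its homology matrix is block--diagonal with permutation blocks, so right multiplication by it (or its inverse) merely permutes columns inside each block column and therefore preserves the property of being a blow up of $M$. Hence it suffices to prove the statement for an essential lift $F$.

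The heart of the argument is to show that for each pair $(i,j)$ and each vertex $u_{j_b}\in V_j$, the corresponding column of the $(i,j)$ block of $\tilde M$ has at most one nonzero entry. Since the class of $F(u_{j_b})$ in $H_1(A_\Lambda)$ equals the class of any cyclic reduction of $F(u_{j_b})$, this class is supported on $\esupp F(u_{j_b})$, so the nonzero entries of that column come only from vertices of $V_i$ lying in $\esupp F(u_{j_b})$. Because $F$ is essential we have $u_{j_b}\in\esupp F(u_{j_b})$, and then Proposition~\ref{prop:essupp_auto} gives $\esupp F(u_{j_b})\subseteq V\Lambda_{\gtrsim u_{j_b}}$. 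As $\Lambda$ has no isolated vertices, $u_{j_b}$ has a neighbour $w$, and every $p\in V\Lambda_{\gtrsim u_{j_b}}$ satisfies $w\in\lk(u_{j_b})\subseteq\st(p)$, hence $p\in\st(w)$; thus all of $\esupp F(u_{j_b})$ lies in $\st(w)$ and has pairwise distance at most $2$. This is the mechanism behind Lemma~\ref{FarVertices}. Since two distinct deck--equivalent vertices lie in a common fibre and are therefore at distance at least $3$, at most one vertex of $V_i$ can belong to $\esupp F(u_{j_b})$, proving the claim. I expect this step---pinning down that deck--equivalent vertices are too far apart to coexist in a single essential support---to be the main obstacle, and it is exactly where the no--isolated--vertices hypothesis is used.

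It then remains to identify the surviving entry with $M(i,j)$. Applying the lift relation $f\circ\phi=\phi\circ F$ on homology yields $\phi_*\circ F_*=f_*\circ\phi_*$, and evaluating on $u_{j_b}$ gives $\phi_*(F_*(u_{j_b}))=f_*(w_j)=\sum_i M(i,j)\,w_i$. On the other hand $F_*(u_{j_b})=\sum_{i,a}(\tilde M_{ij})_{ab}\,u_{i_a}$, and since $\phi_*$ sends every $u_{i_a}\in V_i$ to $w_i$, comparing coefficients of $w_i$ gives $\sum_a (\tilde M_{ij})_{ab}=M(i,j)$. Combined with the previous paragraph, this column has at most one nonzero entry and those entries sum to $M(i,j)$; hence when $M(i,j)=0$ the column vanishes, and when $M(i,j)\neq 0$ it has a single entry equal to $M(i,j)$. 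As $(i,j,b)$ were arbitrary, every block of $\tilde M$ has the required form, so $\tilde M$ is a blow up of $M$, completing the plan.
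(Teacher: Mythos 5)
Your overall strategy coincides with the paper's: show each column of each block of $\tilde M$ has at most one nonzero entry (deck-equivalent vertices are at distance at least $3$, while the essential support of the image of a vertex has pairwise distances at most $2$), then pin down the surviving entry by a column-sum computation. Your column-sum step is in fact cleaner than the paper's: where the paper chooses a word $w$ with $\phi(w)=f(v)$ and invokes the exchange argument of Lemma~\ref{ExchangeLemma}, you get the same identity $\sum_a(\tilde M_{ij})_{ab}=M(i,j)$ immediately from functoriality of abelianization, $\phi_*\circ F_*=f_*\circ\phi_*$; that simplification is valid.

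However, your opening reduction has a genuine gap: you invoke Corollary~\ref{lem:deck_lift} to replace $F$ by an essential lift, but that corollary requires $f$ itself to be essential, which is not a hypothesis of the lemma. For non-essential $f$ the step is not merely unjustified but impossible. For instance, take a liftable graph symmetry $f=\sigma$ exchanging two vertices $v_1,v_2$ with $d(v_1,v_2)\geq 3$ (already the identity covering of two disjoint edges gives such an example): if some $F\mu$ with $\mu\in\Deck(\varphi)$ were essential, then $\esupp F\mu(u)\subseteq V\Lambda_{\gtrsim u}$ for $u\in\varphi^{-1}(v_1)$ by Proposition~\ref{prop:essupp_auto}, and Lemmas~\ref{lem:cover_essupp} and~\ref{lem:cover-preserving} would force $v_2\gtrsim v_1$, hence $d(v_1,v_2)\leq 2$ since $v_1$ is not isolated --- a contradiction. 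The fix keeps the rest of your argument verbatim: do not make $F$ essential at all, but apply Theorem~\ref{thm:iso_thm} to $F$ to obtain a graph symmetry $\tau$ of $\Lambda$ with $\tau(u)\in\esupp F(u)$ and $\deg\tau(u)=\deg u$; Proposition~\ref{prop:essupp_auto} then gives $\esupp F(u)\subseteq V\Lambda_{\gtrsim\tau(u)}$, and your star-containment argument, anchored at the non-isolated vertex $\tau(u)$ instead of $u$, again bounds all pairwise distances by $2$. The definition of blow up does not care in which row of a block the nonzero entry sits, so essentiality is never needed. (To be fair, the paper's own proof, which only says ``as in Theorem~\ref{CommensurableTorelli}'', glosses over the same point, since that theorem's proof begins with the essentiality reduction that is only available there because the identity is essential.)
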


\begin{proof}
Let $v$ be a vertex in $\Gamma$ and let the abelianization of $f(v)$ be the vector $(x_1,\cdots,x_m)$. Then, for any vertex $u \in \varphi^{-1}(v)$, we have that $\Sigma_{V_i}(F(u))$ is a vector such that all of its coordinates are 0's expect for one coordinate which is equal to $x_i$ (note that the size of the vector $\Sigma_{V_i}(F(u))$ is equal to the degree of the covering map $\varphi$ which may not be equal to $m$). Indeed, as in Theorem \ref{CommensurableTorelli}, we can show that $\Sigma(F(u))$ has only one non-zero component. Moreover, if $w$ is any word in $A_\Lambda$ such that $\phi(w) = f(v)$, then $w^{-1} F(u) \in \ker(\phi)$. Using Lemma \ref{ExchangeLemma}, we show, the same way as we did in Theorem \ref{CommensurableTorelli}, that $\Sigma_{V_i}(F(u)) = \Sigma_{V_i} (w)$.
\end{proof}

We finally can prove that there are no other liftable transvections than the one described by Lemma \ref{lem:lift_transvection}.
\begin{corollary}\label{lem:lift_transvection2}
For a regular covering map $\varphi: \Lambda \to \Gamma$ of graphs without isolated vertices, let $v$ and $v'$ be vertices in $\Gamma$ satisfying $\lk(v) \subseteq \st(v')$.
Then a transvection $T_v^{v'}$ is liftable if and only if $v \lesssim_\varphi v'$.
\end{corollary}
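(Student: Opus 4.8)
The plan is to get the forward (``if'') direction for free from Lemma~\ref{lem:lift_transvection}, so that the entire task is the converse: assuming $T_v^{v'}$ is liftable, deduce $v\lesssim_\varphi v'$. The first step is to normalize the lift. The automorphism $T_v^{v'}$ is essential, since it fixes every vertex other than $v$ and sends $v$ to the cyclically reduced word $vv'$. Hence by Corollary~\ref{lem:deck_lift} any lift of $T_v^{v'}$ may be post-composed with a deck transformation to produce an \emph{essential} lift which is still a lift of $T_v^{v'}$ (deck transformations are lifts of the identity). So I may fix once and for all an essential lift $F\in\FAut(\varphi)$.

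Now fix an arbitrary $u\in\varphi^{-1}(v)$; the goal for this $u$ is to exhibit a single $u'\in\varphi^{-1}(v')$ with $u\lesssim u'$. Since $T_v^{v'}(v)=vv'$ abelianizes to $e_v+e_{v'}$, the preceding lemma applies: its key identity $\Sigma_{V_i}(F(u))=\Sigma_{V_i}(w)$ for any $w$ with $\phi(w)=T_v^{v'}(v)$ (established exactly as in the proof of Theorem~\ref{CommensurableTorelli} via Lemma~\ref{ExchangeLemma}) shows that $\Sigma_{V_i}(F(u))=0$ for every deck-equivalence class $V_i$ other than those lying over $v$ and over $v'$, and that over $v$ and over $v'$ the vector $\Sigma_{V_i}(F(u))$ has exactly one nonzero entry, equal to $1$. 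Let $c(u)$ be a cyclically reduced word conjugate to $F(u)$. Because $F$ is essential, $u\in\esupp F(u)=\supp c(u)$; and by Lemma~\ref{FarVertices} (distinct vertices of a fiber lie at distance $\ge 3$), no two distinct deck-equivalent vertices can both belong to $\supp c(u)$. This forces the nonzero entry of the class over $v$ to sit at $u$ itself, and the nonzero entry of the class over $v'$ to sit at a \emph{unique} vertex $u'\in\varphi^{-1}(v')$. In particular $u'$ has net exponent $1$ in $F(u)$, hence $u'\in\supp c(u)$, and $u'\neq u$ since $\varphi(u')=v'\neq v=\varphi(u)$.

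The last step feeds this into the rank bound. Automorphisms preserve centralizers, so $\rank(c(u))=\rank(F(u))=\rank(u)=|\st(u)|$. Since $u\in\supp c(u)$ realizes the equality $|\st(u)|=\rank(c(u))$, Lemma~\ref{lem:rank}\eqref{enum2:rank}\eqref{enumc:rank} gives $u\lesssim w$ for every $w\in\supp c(u)$; taking $w=u'$ yields $u\lesssim u'$, i.e. $\lk(u)\subseteq\st(u')$. As $u\in\varphi^{-1}(v)$ was arbitrary, the definition of the suborder gives $v\lesssim_\varphi v'$, which is the converse.

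I expect the main obstacle to be the middle paragraph: promoting the purely homological (blow-up) information to the precise combinatorial statement that $F(u)$ abelianizes to $e_u+e_{u'}$ with a \emph{single} companion vertex $u'$, and, crucially, that this $u'$ genuinely survives in the cyclically reduced support $\supp c(u)$ rather than cancelling. This is precisely where essentiality and the distance-$\ge 3$ property of fibers (through Lemma~\ref{FarVertices}) are indispensable, and it is also the only place where the no-isolated-vertices hypothesis is used. Once $u'\in\supp c(u)$ is secured, the rank lemma closes the argument mechanically.
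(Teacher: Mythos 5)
Your proof is correct, and its first half coincides with the paper's: both arguments normalize to an essential lift $F$ of $T_v^{v'}$ via Corollary~\ref{lem:deck_lift}, and both extract the same homological information (each $\Sigma_{V_i}(F(u))$ has at most one nonzero entry, with coordinate sum prescribed by the abelianization of $vv'$) using Lemma~\ref{ExchangeLemma} and the cyclically-reduced-support analysis of Theorem~\ref{CommensurableTorelli}. Where you genuinely diverge is in converting this data into the order relation. The paper argues by contradiction: it writes the essential lift as a product of inversions, partial conjugations and transvections (as in Proposition~\ref{prop:decom}), orders $V\Lambda$ by Lemma~\ref{ordering2} so that this product acts on $H_1(A_\Lambda)$ by a block upper triangular matrix, and then notes that if $v\not\lesssim_\varphi v'$ the deck class of $u$ can be placed before that of $u'$, so the blow-up structure of the matrix of $F$ forces a nonzero entry below the diagonal, a contradiction. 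You instead stay at the level of essential supports: the $\Sigma_{V_i}$-data plus essentiality produce a vertex $u'\in\varphi^{-1}(v')$ with nonzero exponent sum, hence $u'\in\esupp F(u)$, and then Lemma~\ref{lem:rank}\eqref{enum2:rank}\eqref{enumc:rank} (equivalently Proposition~\ref{prop:essupp_auto}) gives $u\lesssim u'$ directly, whence $v\lesssim_\varphi v'$. This yields a direct (non-contradiction) proof that bypasses Proposition~\ref{prop:decom} and both ordering lemmas (Lemmas~\ref{ordering1} and~\ref{ordering2}) entirely, and it exhibits an explicit witness $u'$ for each $u$; the paper's matrix formulation, by contrast, keeps the argument inside the homological-representation framework it also uses for Corollary~\ref{cor:Birman-Hilden}.

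Two small points of care, both inherited from the paper's own wording rather than errors specific to you. First, the identity $\Sigma_{V_i}(F(u))=\Sigma_{V_i}(w)$ cannot hold literally for \emph{every} $w$ with $\phi(w)=vv'$, since two such $w$ differ by an element of $\ker\phi$ whose abelianization need not vanish; what the exchange argument actually gives is equality of the \emph{coordinate sums}, and it is this, combined with the at-most-one-nonzero-entry fact, that justifies your conclusions. Second, ruling out two distinct deck-equivalent vertices in $\supp(c(u))$ when neither of them is $u$ is not a literal application of Lemma~\ref{FarVertices}; one needs $\esupp F(u)\subseteq V\Lambda_{\gtrsim u}$ (Proposition~\ref{prop:essupp_auto}) together with the observation that, absent isolated vertices, any two vertices of $V\Lambda_{\gtrsim u}$ lie at distance at most $2$, so no two vertices of a fiber can both occur there. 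Also, the no-isolated-vertices hypothesis is used not only in this support analysis but already in Corollary~\ref{lem:deck_lift}, so your closing remark that it enters in only one place is slightly off; none of this affects the validity of the proof.
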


\begin{proof}
We only need to prove the "only if" direction. Assume that $v \lesssim v'$ and there exists a lift $F$ of $T_v^{v'}$, but $v$ and $v'$ are not comparable under the suborder $\lesssim_\varphi$. By Lemma \ref{lem:deck_lift}, there exists a deck transformation $\mu \in \Deck(\varphi)$ such that $F\mu$ is essential, and as in the proof of Proposition \ref{prop:decom}, $F\mu$ can be written as a product of inversions, partial conjugations and transvections. We order the vertices of $\Lambda$ as in Lemma \ref{ordering2}. Then, the matrix $\tilde{N}$ corresponding to the action of $F\mu$ on  $H_1(A_\Lambda)$ is a block upper triangular matrix where the blocks correspond to the the deck-equivalence classes of vertices. Indeed, partial conjugations act trivially on $H_1(A_\Lambda)$, inversions act as diagonal matrices and transvections acts as upper triangular matrices due the the choice of the ordering. We then deduce that the matrix $\tilde{M}$ corresponding to the action of $F$ on $H_1(A_\Lambda)$ is also block upper triangular, since it is obtained by multiplying $\tilde{N}$ by a block triangular matrix corresponding to the action of $\sigma^{-1}$.

On the other hand, we know that the matrix $\tilde{M}$ is a blow up of the matrix corresponding to the action of $T_v^{v'}$ on $H_1(A_\Gamma)$. Choose any vertices $u\in\varphi^{-1}(v)$ and $u'\in\varphi^{-1}(v')$. 
By our assumption, $u$ and $u'$ are not comparable under the link-star order. It means that we can change the order on $V\Lambda$ without ruining the proof such that $u$ comes before $u'$.
Then, the column of $\tilde{M}$ corresponding to $u$ contains exactly two non zeroes entries, with value 1. One is in the equivalence class of $u$ and the other one is in the equivalence class of $u'$. But then, in the ordering of the vertices that we chose, this matrix would not be block upper triangular since $u$ comes before $u'$ in the revised order and they are not in the same deck-equivalence class.
\end{proof}

\bibliographystyle{amsplain}
\bibliography{KOST_references}

\end{document}